\definecolor{darkblue}{rgb}{0,0,0.3}
\definecolor{darkgreen}{rgb}{0,0.4,0}
\theoremstyle{plain}
\newtheorem{thm}{Theorem}[section]
\newtheorem{conj}[thm]{Conjecture}
\newtheorem{lem}[thm]{Lemma}
\newtheorem{cor}[thm]{Corollary}
\newtheorem{prop}[thm]{Proposition}
\theoremstyle{definition}
\newtheorem{define}[thm]{Definition}
\newtheorem{rem}[thm]{Remark}
\newtheorem{example}[thm]{Example}
\newtheorem{notate}[thm]{Notation}
\numberwithin{equation}{section}
\DeclareFontFamily{U}{russian}{}
\DeclareFontShape{U}{russian}{m}{n}
        { <5><6> wncyr5
        <7><8><9> wncyr7
        <10><10.95><12><14.4><17.28><20.74><24.88> wncyr10 }{}
\DeclareSymbolFont{Russian}{U}{russian}{m}{n}
\DeclareSymbolFontAlphabet{\mathcyr}{Russian}
\let\@math@cyr\mathcyr
\renewcommand{\mathcyr}[1]{\@math@cyr{\cyracc #1}}
\newcommand{\Sha}{{\mathcyr{Sh}}}
\newcommand{\Ba}{{\mathcyr{B}}}
\newcommand{\ab}{{\mathrm{ab}}}
\newcommand{\isoto}{\myxrightarrow{\,\sim\,}}
\def\myrightarrow{{\setbox\z@\hbox{$\rightarrow$}\dimen0\ht\z@\multiply\dimen0 6\divide\dimen0 10\ht\z@\dimen0\box\z@}}
\def\myrightarrowfill@{\arrowfill@\relbar\relbar\myrightarrow}
\newcommand{\myxrightarrow}[2][]{\ext@arrow 0359\myrightarrowfill@{#1}{#2}}
\newcommand{\A}{{\mathrm A}}
\newcommand{\B}{{\mathrm B}}
\def\C{{\mathrm C}}
\newcommand{\N}{{\mathrm N}}
\renewcommand{\P}{{\mathrm P}}
\newcommand{\mS}{{\mathrm S}}
\newcommand{\T}{{\mathrm T}}
\newcommand{\Z}{{\mathrm Z}}
\newcommand{\ZZ}{{\mathbf Z}}
\newcommand{\QQ}{{\mathbf Q}}
\newcommand{\FF}{{\mathbf F}}
\newcommand{\RR}{{\mathbf R}}
\newcommand{\nr}{\mathrm{nr}}
\newcommand{\Gal}{\mathrm{Gal}}
\newcommand{\Pic}{\mathrm{Pic}}
\newcommand{\Br}{\mathrm{Br}}
\renewcommand{\emptyset}{\varnothing}
\newcommand{\Ker}{{\mathrm{Ker}}}
\newcommand{\Hom}{{\mathrm{Hom}}}
\newcommand{\mmu}{\boldsymbol{\mu}}
\DeclareMathOperator{\inv}{inv}
\DeclareMathOperator{\spec}{Spec}
\DeclareMathOperator{\Ext}{Ext}
\DeclareMathOperator{\Aut}{Aut}
\DeclareMathOperator{\SL}{SL}
\DeclareMathOperator{\cyc}{cyc}
\DeclareMathOperator{\im}{Im}
\DeclareMathOperator{\Mat}{Mat}
\def\U{\mathrm{U}}
\def\<{\langle}
\def\>{\rangle}
\def\alp{\alpha}
\def\bet{\beta}
\def\sig{\sigma}
\def\Gam{\Gamma}
\def\gam{\gamma}
\def\vphi{\varphi}
\def\eps{\varepsilon}
\def\Lam{\Lambda}
\def\lrar{\to}
\def\hrar{\hookrightarrow}
\def\ovl{\overline}
\def\Om{\Omega}
\def\x{\stackrel}
\def\wtl{\widetilde}
\def\del{\delta}
\def\what{\widehat}
\newcommand{\bark}{{\mkern1mu\overline{\mkern-1mu{}k\mkern-1mu}\mkern1mu}}
\newcommand{\GL}{\mathrm{GL}}
\newcommand{\rvline}{\hspace*{-\arraycolsep}\vline\hspace*{-\arraycolsep}}
\setlist[itemize]{leftmargin=*}
\setlist[enumerate]{leftmargin=*}
\date{April 13th, 2019; revised on December, 9th, 2021}
\title{The Massey vanishing conjecture for number fields}
\author{Yonatan Harpaz}
\address{Institut Galil\'ee, Universit\'e Sorbonne Paris Nord, 99~avenue Jean-Baptiste Cl\'ement, 93430 Villetaneuse, France}
\email{harpaz@math.univ-paris13.fr}
\author{Olivier Wittenberg}
\address{Institut Galil\'ee, Universit\'e Sorbonne Paris Nord, 99~avenue Jean-Baptiste Cl\'ement, 93430 Villetaneuse, France}
\email{wittenberg@math.univ-paris13.fr}
\begin{document}
\begin{abstract}
A conjecture of Mináč and Tân predicts that
for any $n\geq 3$, any prime~$p$ and any field~$k$,
the Massey product of~$n$ Galois cohomology classes in $H^1(k,\ZZ/p\ZZ)$
must vanish if it is defined.
We establish this conjecture when~$k$ is a number field.
\end{abstract}

\maketitle

\section{Introduction}

A powerful invariant of a topological space $X$ is its graded cohomology ring $H^*(X,R)$ with coefficients in a given ring $R$. It is a natural question to understand what geometric information about $X$ is captured by $H^*(X,R)$, and what information is lost. With that in mind, one may first observe that the cup product on $H^*(X,R)$ exists already on the level of the \emph{cochain complex} $C^*(X,R)$, where it satisfies the Leibniz rule with respect to the differential. In other words, $C^*(X,R)$ is a \emph{differential graded ring}. As a preliminary step one may thus ask exactly what is lost in the passage from $C^*(X,R)$ to $H^*(X,R)$. 

To make this information measurable, one is led to construct invariants of $C^*(X,R)$ which do not only depend on the associated cohomology ring. A prominent example of such invariants are the \emph{higher Massey products} \cite{Mas58}. To define these, let us fix a differential graded ring, i.e., a cochain complex $(C^\bullet,\partial)$ equipped with an associative graded product $C^n \otimes C^m \lrar C^{n+m}$ which satisfies the signed Leibniz rule with respect to $\partial$. Let $\alp_0,\dots,\alp_{n-1} \in H^1(C)$ be cohomology elements of degree $1$. A \emph{defining system} for the $n$-fold Massey product of $\alp_0,\dots,\alp_{n-1}$ is a collection of elements $a_{i,j} \in C^1$ for $0 \leq i < j \leq n,(i,j) \neq (0,n)$, satisfying the following two conditions\footnote{We follow Kraines' \cite{Kra66} definition, which differs from Dwyer's \cite{Dwy75} by a sign.}:
\begin{enumerate}
\item
$\partial a_{i,j} = -\sum_{m=i+1}^{j-1} a_{i,m} \cup a_{m,j}$. In particular, $\partial a_{i,i+1} = 0$.
\item
$[a_{i,i+1}] = \alp_{i} \in H^1(C)$ for $i=0,\dots,n-1$.
\end{enumerate}
Given a defining system $\Lam = \{a_{i,j}\}$, the element $b_{0,n} = -\sum_{m=1}^{n-1}a_{0,m} \cup a_{m,n}$ is then a $2$\nobreakdash-cocycle; the value $\<\alp_0,\dots,\alp_{n-1}\>_{\Lam} := [b_{0,n}] \in H^2(C)$ is called the \emph{$n$-fold Massey product} of $\alp_0,\dots,\alp_{n-1}$ with respect to the defining system $\Lam$. We then let $\<\alp_0,\dots,\alp_{n-1}\> \subseteq H^2(C)$ denote the set of all elements which can be obtained as the $n$-fold Massey product of $\alp_0,\dots,\alp_{n-1}$ with respect to some defining system. We say that the $n$-fold Massey product of $\alp_0,\dots,\alp_{n-1}$ is \emph{defined} if there exists a defining system as above, and that it \emph{vanishes} if it is defined and furthermore $\<\alp_0,\dots,\alp_{n-1}\>_{\Lam} = 0$ for at least one defining system $\Lam$.

Higher Massey products provide invariants of a differential graded ring which do not factor through its cohomology. A geometric example, originally described by Massey \cite{Mas58bis} \cite{Mas98}, where higher Massey products give non-trivial information is the following: let $I$ be a finite set and consider an embedding $\vphi\colon \coprod_{i \in I} \mathbf S^1 \hrar \RR^3$ of $|I|$ circles in $3$-space. Let $X := \RR^3 \setminus \im(\vphi)$ be its complement. Alexander duality shows that $H^1(X,\ZZ)$ and $H^2(X,\ZZ)$ are both naturally isomorphic to $\ZZ^I$. In addition, the cup product on $H^*(X,\ZZ)$ encodes the \emph{linking number} of each pair of circles. When these linking numbers are non-zero the cohomology ring of the complement $X$ detects the fact that the circle configuration is not isotopic to a trivial configuration (whose complement is homotopy equivalent to a wedge of circles and $2$-spheres). A case where this information is not enough to distinguish a given circle configuration from the trivial one is the \emph{Borromean rings}: a configuration of three circles in $\RR^3$ such that every pair is unlinked and yet the configuration as a whole is not isotopic to a trivial embedding. By contrast, the complement $X$ of the Borromean ring configuration can be distinguished from the complement of the trivial embedding by considering the \emph{differential graded ring} $C^*(X,\ZZ)$, instead of merely the associated cohomology ring. Indeed, since each two of the rings are unlinked, the cohomology group $H^1(X,\ZZ)$ contains three elements $\alp_0,\alp_1,\alp_2$, such that the cup product of each pair vanishes, and so there exists a defining system $\Lam$ for the associated triple Massey product. Furthermore, one can show that $\<\alp_0,\alp_1,\alp_2\>_{\Lam} \in H^2(X,\ZZ)$ is non-zero for any choice of defining system $\Lambda$. In particular, $X$ is not homotopy equivalent to the complement of the trivial configuration. 

Shifting attention from topology to arithmetic, it has been observed that open subschemes of spectra of number rings obtained by removing a finite set of primes $S$ behave much like complements of links in $\RR^3$. In particular, higher Massey products in étale cohomology of such schemes carries important information about their fundamental groups, which, in turn, controls Galois extensions unramified outside $S$ (see \cite{morishita}, \cite{Vog04}, \cite{Sha11}). By contrast, it was shown by Hopkins and Wickelgren \cite{HW15} that all triple Massey products \emph{vanish} if we replace the étale cohomology of Dedekind rings by the Galois cohomology of \emph{number fields}, and consider coefficients in $\FF_2$.
This result was extended to all fields by Mináč and Tân~\cite{MN17a}, still with coefficients in $\FF_2$.
The idea was then put forward that for every $n \geq 3$, every prime $p$ and every field, $n$-fold
Massey products with coefficients in $\FF_p$ should vanish as soon as they
are defined (first in
\cite{MN17a} under an assumption on the roots of unity and later in \cite{MN16} in general).
This conjecture became known as the \emph{Massey vanishing conjecture}:

\begin{conj}[Mináč, Tân]\label{c:massey}
For every field $k$, prime $p$ and cohomology classes $\alp_0,\dots,\alp_{n-1} \in H^1(k,\FF_p)$ with $n \geq 3$, if the subset $\<\alp_0,\dots,\alp_{n-1}\> \subseteq H^2(k,\FF_p)$ given by the $n$-fold Massey product is non-empty, then it contains $0 \in H^2(k,\FF_p)$.
\end{conj}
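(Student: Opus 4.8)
The plan is to recast the conjecture as a Galois embedding problem and then to force the resulting obstruction class in $H^2(k,\FF_p)$ to vanish by a judicious choice of defining system. We may assume that $k$ has characteristic different from $p$, since otherwise $\mathrm{cd}_p(k) \leq 1$, so $H^2(k,\FF_p) = 0$ and the statement is vacuous. Let $U$ be the group of upper unitriangular $(n+1)\times(n+1)$ matrices over $\FF_p$, let $Z \cong \FF_p$ be its centre (the $(1,n+1)$-entry), and set $\overline U = U/Z$. The standard dictionary due to Dwyer and to Min\'a\v{c}--T\^an identifies a defining system $\Lam$ for $\<\alp_0,\dots,\alp_{n-1}\>$ with a continuous homomorphism $\bar\rho_\Lam \colon G_k \to \overline U$ whose composite with the projection of $\overline U$ onto its $n$ superdiagonal entries is $(\alp_0,\dots,\alp_{n-1})$; under this dictionary $\<\alp_0,\dots,\alp_{n-1}\>_\Lam \in H^2(k,\FF_p)$ is the obstruction $\bar\rho_\Lam^*\eps$ to lifting $\bar\rho_\Lam$ through $U \surj \overline U$, where $\eps \in H^2(\overline U,\FF_p)$ is the class of $1 \to Z \to U \to \overline U \to 1$. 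Thus the conjecture asserts: if some $\bar\rho \colon G_k \to \overline U$ lies over $(\alp_0,\dots,\alp_{n-1})$, then some such $\bar\rho$ lifts to $U$.

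Next I would reduce to the case $\mmu_p \subseteq k$. Pass to $k' = k(\mmu_p)$: its degree $d = [k':k]$ divides $p-1$, so restriction $H^\bullet(k,\FF_p) \to H^\bullet(k',\FF_p)$ is injective with splitting $\tfrac{1}{d}\cores$. One checks that $\<\alp_0,\dots,\alp_{n-1}\>$ is defined over $k$ if and only if the corresponding product over $k'$ is defined in a $\Gal(k'/k)$-equivariant way, and one descends the vanishing assertion from $k'$ to $k$ by corestricting a zero-valued defining system and averaging --- the mechanism that removes the root-of-unity hypothesis already when $n = 3$. So we may assume $\mmu_p \subseteq k$, whence $H^1(k,\FF_p) = k^\times/k^{\times p}$, $H^2(k,\FF_p) = \Br(k)[p]$, cup products are norm-residue symbols, and the norm-residue isomorphism is available.

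With $\mmu_p \subseteq k$ in hand I would argue by induction on $n$, the case $n = 3$ being a theorem of Matzri and of Min\'a\v{c}--T\^an, and the case $n = 4$, $p = 2$ being known from work of Guillot, Min\'a\v{c}, Topaz and Wittenberg. Given $\bar\rho \colon G_k \to \overline U$ over $(\alp_0,\dots,\alp_{n-1})$, its upper-left and lower-right $n\times n$ blocks are honest homomorphisms $\rho', \rho'' \colon G_k \to U_n(\FF_p)$ lifting the truncations $(\alp_0,\dots,\alp_{n-2})$ and $(\alp_1,\dots,\alp_{n-1})$, so the two shorter products vanish automatically, confirming the internal consistency of the set-up. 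The task is then to glue $\rho'$ and $\rho''$ into a single lift $G_k \to U$; once one has exhausted the permissible modifications of the defining system --- those moving $\bar\rho$ by cocycles valued in the appropriate unipotent subgroups while fixing all the $\alp_i$ --- the remaining obstruction is one class $\omega(\bar\rho) \in \Br(k)[p]$, a sum of symbol classes built from the $\alp_i$ and from the free parameters of $\bar\rho$, and the free parameters must be chosen so that $\omega(\bar\rho) = 0$.

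The hard part will be forcing $\omega(\bar\rho) = 0$, and this is exactly where the present paper's number-field hypothesis is essential. Over a number field one tests triviality of $\omega(\bar\rho)$ place by place, kills each local component by a local modification of the defining system (the statement being unproblematic over $\RR$ and over local fields), promotes these into a single global modification by an approximation argument --- with a Poitou--Tate computation controlling a possible residual obstruction living in a $\Sha^2$ --- and then concludes from the Albert--Brauer--Hasse--Noether theorem that $\omega(\bar\rho)$ is globally trivial. Over an arbitrary field none of these ingredients survives, and the naive induction is obstructed by the genuinely non-additive indeterminacy of higher Massey products, which prevents one from simply propagating the $(n-1)$-fold case. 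Controlling the decomposability of $\omega(\bar\rho)$ in $\Br(k)[p]$ with no arithmetic input on $k$ therefore seems to demand a new idea; I expect this to be the real obstacle, and it is presumably why, beyond the small cases and the number-field case established here, the conjecture remains open in general.
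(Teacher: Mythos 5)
The statement you were asked to prove is the full Massey vanishing conjecture for arbitrary fields, and your proposal does not prove it --- indeed you concede as much in your last paragraph. To be clear, the paper does not prove it either: it is stated there as a conjecture, and the paper establishes only the number-field case (its Theorem~\ref{t:massey}). So the honest verdict is that your text is a discussion of the problem rather than a proof, and the gaps are concrete. First, your reduction to $\mmu_p \subseteq k$ by ``corestricting a zero-valued defining system and averaging'' is not a valid operation: a defining system is a homomorphism $\Gam_k \to \U/\Z$, a nonabelian lifting datum, and corestriction/transfer arguments do not apply to such lifting problems; the known removal of the root-of-unity hypothesis at $n=3$ (Efrat--Matzri, Mináč--Tân) is genuinely harder than this, and the paper itself handles the absence of $p$-th roots of unity by entirely different means (for number fields, via the vanishing of $H^1(G,\hat{\B})$ in Proposition~\ref{prop:nopthroot}, and for local fields via $H^2(K,\FF_p)=0$). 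Second, your inductive scheme rests on the unproved claim that after all permissible modifications the obstruction collapses to a single well-controlled class $\omega(\bar\rho)\in\Br(k)[p]$ expressible in symbols; the indeterminacy of higher Massey products is not a subgroup and no such normal form is available, which is precisely the ``non-additive indeterminacy'' you yourself flag.

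Even for the number-field case, your sketch (local fixes, Poitou--Tate, Albert--Brauer--Hasse--Noether) is not how the paper proceeds and omits the ideas that make the argument work. The paper passes through the Pál--Schlank homogeneous space $V=(\SL_N\times T_\alp)/\U$ as a splitting variety, invokes the authors' earlier theorem on homogeneous spaces with supersolvable finite stabilizers to reduce the existence of a rational point to the existence of an adelic point orthogonal to $\Br_{\nr}(V)$, shows the transcendental part of $\Br_{\nr}(V)$ vanishes (Bogomolov/Michailov) and bounds the algebraic part via the Harari--Demarche--Lucchini Arteche formula (Propositions~\ref{p:formula} and~\ref{p:bound_on_brnr1}), and then --- this is the key new input --- introduces the abelian normal subgroups $\P^{r,s}$ with $r+s=n-1$, proves a \emph{refined} local lifting statement (Proposition~\ref{p:local}: a homomorphism to $\U/\P^{r,s}$ lifting to $\U/\Z$ lifts to $\U$ over any local field), and uses the intermediate homogeneous space $W$ with stabilizer $\P^{r,s}$ to manufacture local points on $V$ that are automatically Brauer--Manin orthogonal. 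None of these mechanisms appears in your outline, so the number-field case is not recovered by it; and for general fields the statement remains, as you say, open.
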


Matzri~\cite{Mat14}, followed by Efrat and Matzri~\cite{efratmatzri} and by Mináč and Tân \cite{MN16},
established this conjecture for $n=3$, yielding, in effect, a strong restriction on the type of groups which can appear as absolute Galois groups. This restriction is related to many subtle structural properties of the maximal pro-$p$-quotients of absolute Galois groups, see e.g., the work of Efrat~\cite{efrat2014} and Mináč--Tân~\cite{MN17b}. Recent work of Matzri~\cite{Mat18, Mat19} also yields vanishing results for certain triple Massey products of cohomology classes of degree~$>1$.

\begin{rem}\label{r:formal}
The Massey vanishing property appearing in Conjecture~\ref{c:massey} (Massey products of $n \geq 3$ classes are trivial as soon as they are defined) holds, for example, for differential graded rings which are \emph{formal}, that is, which are quasi-isomorphic to their cohomology.
Conjecture~\ref{c:massey} may then lead one to ask whether the differential graded ring $C^*(k,\FF_p)$ controlling Galois cohomology is actually formal---this question was indeed posed by Hopkins and Wickelgren, see \cite[Question 1.3]{HW15}. However, the answer is in the negative: Positselski \cite{positselski2011}
(see also \cite[\textsection6]{Pos17}) showed that for local fields of characteristic $\neq p$ which contain a primitive $p$-th root of unity (or a square root of $-1$ when $p=2$), the differential graded ring $C^*(k,\FF_p)$ is not formal (even though these fields do have the Massey vanishing property, see \cite{MN17a}).
\end{rem}

The Massey vanishing conjecture is known in the following cases:
\begin{enumerate}
\item
When $n=3$ and $k$ and $p$ are arbitrary \cite{Mat14, efratmatzri, MN16}.
\item
When $k$ is a local field, $n \geq 3$ and $p$ is arbitrary. We note that in this case, if $k$ has characteristic $p$ or does not possess a primitive $p$-th root of unity then $H^2(k,\FF_p)=0$ by \cite[Theorem 9.1]{Koc13} and local duality respectively, and hence the Massey vanishing property holds trivially. The claim in the general non-trivial case was proven in \cite[Theorem 4.3]{MN17a}, using local duality.
\item
When $k$ is a number field, $n=4$ and $p=2$ \cite{GMTW16}. (The proof exploits the arithmetic of
a splitting variety constructed in \emph{op.\ cit.}\ for $n=4$ and $p=2$.  Under genericity assumptions
on the classes $\alp_1,...,\alp_n$, this construction of a splitting variety was later
generalised to $n=4$ and arbitrary~$p$ in~\cite{GM19}.)
\item
For all $n$ and all $p$, when $k$ is a field of virtual cohomological dimension 1 \cite{PS18}.
\end{enumerate}

\medskip
Our main result in the present paper is the following (see Theorem~\ref{t:massey} below):

\begin{thm}
The Massey vanishing conjecture holds for all number fields, all $n \geq 3$ and all primes $p$. 
\end{thm}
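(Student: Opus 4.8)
The plan is to translate the statement, through the standard correspondence between Massey products and unipotent Galois representations, into a question about the existence of rational points on a well-chosen $k$-variety, and then to resolve that question by means of the theory of the Brauer--Manin obstruction and the fibration method. Let $U=U_{n+1}(\FF_p)$ be the group of upper unitriangular $(n+1)\times(n+1)$ matrices over $\FF_p$, let $Z\cong\FF_p$ be its centre — the top-right entry — and set $\bar U=U/Z$. For a continuous homomorphism $\varphi\colon G_k\to U$ (or to $\bar U$) write $d(\varphi)$ for the tuple of its first-superdiagonal coordinates, a homomorphism $G_k\to\FF_p^n$, and write $\delta=(\alp_0,\dots,\alp_{n-1})$ for the given such tuple. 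By Dwyer's theorem \cite{Dwy75}, $\<\alp_0,\dots,\alp_{n-1}\>$ is non-empty if and only if there is a $\bar\varphi\colon G_k\to\bar U$ with $d(\bar\varphi)=\delta$, and it contains $0$ if and only if there is a $\varphi\colon G_k\to U$ with $d(\varphi)=\delta$ (such $\varphi$ need not lift a prescribed $\bar\varphi$). So it suffices to show: if a $\bar\varphi$ with $d(\bar\varphi)=\delta$ exists, then a $\varphi$ with $d(\varphi)=\delta$ exists.

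Next I would build the relevant variety. Along the lower central series $U\supset\gamma_2\supset\cdots\supset\gamma_{n+1}=1$, each quotient $U/\gamma_{i+1}\to U/\gamma_i$ is a central extension by $\gamma_i/\gamma_{i+1}\cong\FF_p^{n+1-i}$, with $U/\gamma_2=\FF_p^n$ the diagonal and $U/\gamma_n=\bar U$. One lifts $\delta$ through these extensions one layer at a time; the obstruction at each layer is a class in an $H^2$ with $\FF_p$-coefficients which, via the Kummer sequence $1\to\mmu_p\to\Gm\xrightarrow{p}\Gm\to1$ (after reducing to, or twisting the tori to account for, the case $\mmu_p\subset k$), unwinds to a sum of cyclic symbols formed from the $\alp_i$ and from the free parameters introduced at earlier layers. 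The vanishing of such a class is precisely the solubility of a system of norm equations, i.e.\ the existence of a rational point on a torsor under a product of (possibly twisted) norm-one tori over the parameter space of the preceding layers. Iterating produces a smooth $k$-variety $X$, built as a tower $X=X_n\to\cdots\to X_1=\spec k$ whose successive fibres are such torsors, together with a morphism recording $d$, and such that $X(k)\neq\emptyset$ if and only if a $\varphi\colon G_k\to U$ with $d(\varphi)=\delta$ exists. The geometry is favourable: the base is rational, each fibre is a geometrically rational torsor under a torus, and — after arranging the relevant divisors to be hyperplanes — each non-split fibre acquires a rational point over a cyclic degree-$p$ extension of its residue field.

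Now I would run the arithmetic. For every place $v$, restricting the hypothetical $\bar\varphi$ yields $\bar\varphi_v\colon G_{k_v}\to\bar U$ with $d(\bar\varphi_v)=\delta_v$, so the $n$-fold Massey product of the $\alp_i$ is defined over $k_v$; by the Massey vanishing conjecture over local fields (case~(2) of the list above; \cite{MN17a}), it contains $0$, hence $X(k_v)\neq\emptyset$. Thus $X$ has points everywhere locally. Next, computing $\Br(X)/\Br(k)$ in terms of the $\alp_i$, one identifies the class governing the obstruction to the last lift; the reciprocity law relates its local invariants to the Brauer--Manin pairing on the adelic points coming from the local solutions, and the consistency forced by the \emph{global} $\bar\varphi$ together with the local vanishing just obtained yields an adelic point of $X$ orthogonal to $\Br(X)$, i.e.\ $X(\mathbf A_k)^{\Br}\neq\emptyset$. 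Finally I would invoke the theorem that for varieties of the shape built above — iterated fibrations over a rational base into geometrically rational torsors under tori, whose non-split fibres lie over hyperplanes and are split by abelian (here cyclic degree-$p$) extensions — the Brauer--Manin obstruction to the Hasse principle is the only one over a number field: this is Sansuc's theorem for a single torsor under a torus, and for the tower it is the fibration method of Colliot-Th\'el\`ene, Harari and Harpaz--Wittenberg in the range where it is unconditional over number fields, Dirichlet's theorem and additive combinatorics on primes represented by linear forms taking the place of Schinzel's hypothesis. Hence $X(k)\neq\emptyset$, which provides the required $\varphi$, so that $0\in\<\alp_0,\dots,\alp_{n-1}\>$.

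The principal obstacle is the coupling of the last two paragraphs: one must construct $X$ so that it simultaneously encodes the unipotent-representation problem faithfully \emph{and} has geometry confined to the narrow class for which the fibration method is unconditional over number fields — geometrically rational torus-torsor fibres, non-split fibres lying over hyperplanes and split by cyclic $p$-extensions — and then one must control $\Br(X)$ finely enough to see that the bare existence of a defining system, funnelled through the local Massey vanishing conjecture and the reciprocity law, kills the Brauer--Manin obstruction. Lesser difficulties are the bookkeeping of the lower-central-series tower — the central extensions by $\FF_p^m$ with $m>1$ force products of tori and a correspondingly layered Brauer-group computation — and, when $\mmu_p\not\subset k$, the careful treatment of the Galois twist, handled either by a restriction--corestriction descent along $k(\mmu_p)/k$ (legitimate since $p\nmid[k(\mmu_p):k]$) or by carrying $\mmu_p$-twisted norm tori throughout.
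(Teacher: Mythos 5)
Your first step (Dwyer's dictionary, including the observation that the lift to $\U$ need not lie over the given lift to $\U/\Z$) matches the paper, and your overall philosophy---turn the lifting problem into a rational-point problem, get local points from the local Massey vanishing, and conclude by a Brauer--Manin statement proved by the unconditional fibration method---is the same spirit in which the paper works (there, the variety is the P\'al--Schlank homogeneous space $V=(\SL_N\times T_\alp)/\U$ with finite supersolvable stabilizer $\U^1$, and the fibration-method input is packaged once and for all in \cite[Th\'eor\`eme B]{HW18}). But as written your argument has two genuine gaps. First, the variety $X$ you need is only asserted to exist: lifting $\delta$ through the lower central series introduces at each layer cochain-level parameters (the $a_{i,j}$), and it is not explained how these are parametrized by a finite-type $k$-variety, let alone one that is an iterated fibration into torsors under norm-one tori whose non-split fibres sit over hyperplanes and are split by cyclic degree-$p$ extensions; no such explicit splitting variety is known for general $n$ (for $n=3$, and for $n=4$, $p=2$, ad hoc constructions exist, which is why those cases were done earlier). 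The paper sidesteps this entirely by using the homogeneous-space construction, whose stabilizer $\U^1$ is supersolvable for the outer Galois action, so that \cite[Th\'eor\`eme B]{HW18} applies.

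Second, and more seriously, the step ``the consistency forced by the global $\bar\varphi$ together with the local vanishing yields an adelic point orthogonal to $\Br(X)$'' is precisely the crux, and you give no mechanism for it. It is not true that local solvability plus the existence of a global $\bar\varphi$ automatically kills the obstruction: the unramified Brauer group of the natural splitting variety is in general nontrivial, and the paper's Example~\ref{e:n4} ($n=4$, $p=2$) exhibits a locally constant class giving a genuine Brauer--Manin obstruction when $\alp$ does not lift to $\U/\Z$; so the local points must be \emph{chosen} compatibly, not merely shown to exist. The paper's actual mechanism has no counterpart in your proposal: it introduces the subgroup $\P^{r,s}\subseteq\U^1$ (with $r+s=n-1$), proves a \emph{refined} local lifting statement (Proposition~\ref{p:local}: over a local field, liftability from $\U/\P^{r,s}$ to $\U$ follows from liftability to $\U/\Z$), uses it to produce local points on an auxiliary homogeneous space $W$ with stabilizer $\P^{r,s}$ mapping to $V$, and then shows that every unramified class on $V$ pulls back to a constant class on $W$ --- trivially for $n\geq 7$ because $\P^{r,s}\subseteq\U^3$, and for $3\leq n\leq 6$ via the delicate computation that $\Br_{1,\nr}(V)/\Br_0(V)$ dies under $H^1(G,\hat{\B})\to H^1(G,\hat{\B}_0)$ (Proposition~\ref{p:bound_on_brnr1}), together with the vanishing of the transcendental part (Proposition~\ref{p:trans}). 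Without an analogue of this refined local statement and of the Brauer-group analysis, your outline does not close; the plain local Massey vanishing you invoke is strictly weaker than what the argument needs.
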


Our strategy can be summarized as follows. We begin in \S\ref{s:dwyerpalschlank} by recalling an equivalent formalism for $n$-fold Massey products in group cohomology due to Dwyer \cite{Dwy75}, which is based on non-abelian cohomology. We then combine this with the work of Pál and Schlank \cite{PS16} on the relation between homogeneous spaces and embedding problems in order to conclude that a certain naturally occurring homogeneous space $V$ with finite geometric stabilizers can serve as a \emph{splitting variety} for $n$-fold Massey products. These finite stabilizers are \emph{supersolvable}: they admit a finite filtration by normal subgroups, invariant under the natural outer Galois action, whose consecutive quotients are cyclic. This means that $V$ is susceptible to the machinery developed by the authors \cite{HW18} for proving the existence of rational points defined over number fields. As a result, the problem of the $n$-fold Massey vanishing property for number fields can be reduced to that of local fields plus an additional constraint: one needs to show that the local Massey solutions can be chosen to satisfy \emph{global reciprocity} in terms of the unramified Brauer group of $V$. This unramified Brauer group turns out to play a more prominent role specifically when $n$ is small. This is the main topic of \S\ref{s:brauer-1}, where some crucial properties of the unramified Brauer group are extracted in this case. The proof of the Massey vanishing conjecture is then given in \S\ref{s:main}, see Theorem~\ref{t:massey}.

We finish this paper with two additional sections. In \S\ref{s:brauer-2} we provide more elaborate computations of the Brauer group of $V$ when $n \leq 5$, and we describe examples showing that this Brauer group can be non-trivial. Then, in \S\ref{s:beyond} we discuss the Massey vanishing conjecture with more general types of coefficients. We show that for coefficients in the Tate twisted module $\ZZ/p\ZZ(i)$, the proof of the main theorem can be adapted with very small modifications to show the analogous Massey vanishing property over number fields (Theorem~\ref{t:massey-2}). We also show that for coefficients in $\ZZ/8$, the statement of the conjecture actually \emph{fails} for number fields. More precisely, we show that there exist three $\ZZ/8$\nobreakdash-characters of $\QQ$ whose triple Massey product is defined but does not vanish. It seems likely, though, that
the Massey vanishing property with coefficients in $\ZZ/m$ for any integer $m$ can still be shown to hold for \emph{local fields}, using~\cite[Theorem 9.1]{Koc13}
and local duality.

\bigskip
\emph{Acknowledgements.}
We are grateful to Pierre Guillot and J\'an Min\'a\v{c} for many discussions about Massey products,
to Adam Topaz for pointing out the splitting varieties for Massey products
that were constructed by Ambrus P\'al and Tomer Schlank in~\cite{PS16},
to Ido Efrat and Nguy\~{\^e}n Duy T\^an for their comments on a preliminary version of this article
and to the referees for their useful comments.

\section{Higher Massey products, non-abelian cohomology and splitting varieties}\label{s:dwyerpalschlank}

In this section, we explain how Conjecture~\ref{c:massey}, over number fields,
can be related with the problem of constructing an adelic point in the unramified Brauer--Manin set
of a certain homogeneous space.  To this end, we first describe a
group-theoretic approach to Massey products in group cohomology due to
Dwyer~\cite{Dwy75} (a tool already employed in the works of
Hopkins--Wickelgren and Mináč--Tân on Massey products, see \cite{HW15},
\cite{MN16}, \cite{MN17a}, \cite{MN17b}) and recall, following Pál and
Schlank \cite[\S 9]{PS16}, the construction of a homogeneous space of
$\SL_N$ which is a splitting variety for the resulting embedding problem.

Fix an integer $n \geq 2$ and let $\U$ be the group of upper triangular unipotent (=~unitriangular) $(n+1) \times (n+1)$-matrices with coefficients in $\FF_p$, with rows and columns indexed from $0$ to $n$. For $0 \leq i < j \leq n$ we denote by $e_{i,j} \in \U$ the unitriangular matrix whose $(i,j)$ entry is $1$ and all other non-diagonal entries are $0$. Let
\[ \{1\} = \U^{n} \subseteq \dots \subseteq \U^m \subseteq \dots \subseteq \U^0 = \U \]
be the lower central series of $\U$, so that $\U^{m+1} = [\U^m,\U]$ and $\Z := \U^{n-1} = \<e_{0,n}\>$ is the center of $\U$. We can identify $\U^m$ explicitly as the normal subgroup of $\U$ generated by the elementary matrices $e_{i,j}$ for $j-i > m$. Equivalently, it is the subgroup consisting of those unitriangular matrices whose first $m$ non-principal diagonals vanish. In particular, $\U^1 = \U'$ is the derived subgroup and $\U^3 = [\U',\U']$ is the second derived subgroup. 
We will denote by $\A = \U/\U^1$ the abelianization of $\U$. We have a natural basis $\{\ovl{e}_{i,i+1}\}_{i=0}^{n-1}$ for $\A$, where $\ovl{e}_{i,i+1}$ is the image of $e_{i,i+1}$. 

Let $k$ be a field with absolute Galois group $\Gam_k := \Gal(\ovl{k}/k)$. Using the basis $\{e_{i,i+1}\}$ above we may identify the data of a homomorphism $\alp\colon \Gam_k \lrar \A$ with that of a collection $\alp_{i} = [a_{i,i+1}] \in H^1(k,\FF_p)$ of cohomology classes for $i=0,\dots,n-1$.  Let $u \in H^2(\U/\Z,\Z)$ be the element classifying the central extension
\[ 1 \lrar \Z \lrar \U \lrar \U/\Z \lrar 1 \rlap{.}\]
We note that since $n \geq 2$ the abelianization map $\U \lrar \A$ sends the center to $0$ and so factors through a map $\U/\Z \lrar \A$. 

\begin{notate}\label{n:coset}
For $X \in \U/\Z$ and $0 \leq i \leq j \leq n$ with $(i,j) \neq (0,n)$, we will denote by $X_{i,j}$ the $(i,j)$ entry of any coset representative $M \in \U$ of $X$. (This does not depend on the choice of $M$ since multiplying a matrix in $\U$ by a matrix in $\Z$ does not change the $(i,j)$ entry of the former, for any $(i,j) \neq (0,n)$.)
\end{notate}

We now recall the following result of Dwyer.  (Note that a sign appears in \cite[Theorem~2.4]{Dwy75}, however we recall
that we are using a different convention in the definition of Massey products.)

\begin{prop}[{\cite{Dwy75}, see also~\cite[Proposition 8.3]{efrat2014}}]\label{p:dwyer}
Let $\alp_0,\dots,\alp_{n-1} \in H^1(k,\FF_p)$ be cohomology classes.
Write $\alp\colon \Gam_k \to \A$ for the corresponding group homomorphism. 
\begin{enumerate}
\item[(i)]
If $\beta\colon \Gam_k \to \U/\Z$ is a homomorphism lifting $\alp$,
then the collection $\{\beta_{i,j}\}$ of $1$-cochains
given by $\beta_{i,j}(\sigma)=\beta(\sigma)_{i,j}$
 (see Notation~\ref{n:coset}) forms a defining system for the $n$-fold Massey product of $\alp_0,\dots,\alp_{n-1}$. 
\item[(ii)]
The association $\beta \mapsto \{\beta_{i,j}\}$ puts group homomorphisms $\beta\colon\Gam_k \lrar \U/\Z$ lifting $\alp$ and defining systems for the $n$-fold Massey product of $\alp_0,\dots,\alp_{n-1}$ in bijection.
\item[(iii)]
The value of the $n$-fold Massey product of $\alp_0,\dots,\alp_{n-1}$ with respect to a defining system $\{\beta_{i,j}\}$ as in~(ii) is given by the class $-\bet^*u \in H^2(k,\Z) = H^2(k,\FF_p)$, where we have identified the center $\Z$ with $\FF_p$ via the generator $e_{0,n}$.
\end{enumerate}
\end{prop}

\begin{cor}\label{c:dwyer}\
\begin{enumerate}
\item
The homomorphism $\alp\colon \Gam_k \lrar \A$ lifts to $\U/\Z$ if and only if $\<\alp_0,\dots,\alp_{n-1}\> \neq \varnothing$.
\item
The homomorphism $\alp\colon \Gam_k \lrar \A$ lifts to $\U$ if and only if $\<\alp_0,\dots,\alp_{n-1}\>$ contains $0$. 
\item
The Massey vanishing conjecture is equivalent to the statement that any homomorphism $\alp\colon \Gam_k \lrar \A$ which lifts to $\U/\Z$ also lifts to $\U$.
\end{enumerate}
\end{cor}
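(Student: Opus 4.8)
The plan is to derive all three assertions formally from Proposition~\ref{p:dwyer}, which already does the essential work of translating defining systems into lifting data. For part~(1), I would merely unwind the definitions: to say that $\langle\alp_0,\dots,\alp_{n-1}\rangle$ is non-empty is, by definition, to say that the $n$-fold Massey product is \emph{defined}, i.e.\ that some defining system $\Lam$ exists; and Proposition~\ref{p:dwyer} puts defining systems $\Lam$ in bijection with homomorphisms $\alp_\Lam\colon\Gam_k\to\U/\Z$ lifting $\alp$. Hence the existence of a defining system is literally the same as the existence of a lift of $\alp$ to $\U/\Z$.

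For part~(2), the one extra ingredient is the standard fact that, for a central extension $1\to\Z\to\U\to\U/\Z\to1$ classified by $u\in H^2(\U/\Z,\Z)$, a homomorphism $\alp_\Lam\colon\Gam_k\to\U/\Z$ lifts through $\U\to\U/\Z$ if and only if the pulled-back class $\alp_\Lam^*u\in H^2(k,\Z)$ vanishes (here the Galois action on $\U$, hence on its center $\Z$, is trivial, so $H^2(k,\Z)=H^2(k,\FF_p)$, in accordance with Proposition~\ref{p:dwyer}). Combining this with Dwyer's formula $\langle\alp_0,\dots,\alp_{n-1}\rangle_\Lam=-\alp_\Lam^*u$ gives the equivalence: if $0$ lies in the Massey product, choose a defining system $\Lam$ realizing it, so that $\alp_\Lam^*u=0$; then $\alp_\Lam$ lifts to some $\wtl\alp\colon\Gam_k\to\U$, and because the abelianization $\U\to\A$ factors through $\U/\Z\to\A$, this $\wtl\alp$ is automatically a lift of $\alp$ itself. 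Conversely, given a lift $\wtl\alp\colon\Gam_k\to\U$ of $\alp$, composing with $\U\to\U/\Z$ produces a lift of $\alp$ to $\U/\Z$, hence by Proposition~\ref{p:dwyer} a defining system $\Lam$; the associated class $\alp_\Lam^*u$ is then killed by the further lift $\wtl\alp$, so $\langle\alp_0,\dots,\alp_{n-1}\rangle_\Lam=0$.

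Part~(3) is then immediate by combining (1) and (2): the hypothesis of the Massey vanishing conjecture for $\alp_0,\dots,\alp_{n-1}$ (non-emptiness of the Massey product) is equivalent to ``$\alp$ lifts to $\U/\Z$'', and its conclusion (the Massey product contains $0$) is equivalent to ``$\alp$ lifts to $\U$''; so the conjecture over $k$ is precisely the statement that every $\alp\colon\Gam_k\to\A$ which lifts to $\U/\Z$ also lifts to $\U$. I do not anticipate any genuine obstacle---the corollary is a formal repackaging of Proposition~\ref{p:dwyer}---and the only non-tautological step is the elementary central-extension lifting criterion used in part~(2), together with the small bookkeeping point that a $\U$-valued lift of $\alp_\Lam$ and a $\U$-valued lift of $\alp$ are one and the same, by commutativity of $\U\to\U/\Z\to\A$ with $\U\to\A$.
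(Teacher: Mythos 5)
Your proposal is correct and follows exactly the route the paper intends: the paper states Corollary~\ref{c:dwyer} without proof as an immediate consequence of Proposition~\ref{p:dwyer}, and your argument (defining systems = lifts to $\U/\Z$, plus the standard obstruction-theoretic fact that a lift through the central extension $1\to\Z\to\U\to\U/\Z\to1$ exists iff the pulled-back class $\alp_\Lam^*u$ vanishes, combined with Dwyer's formula) is precisely that implicit reasoning.
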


\begin{rem}
In the setting of Corollary~\ref{c:dwyer}(3), the Massey vanishing conjecture does not require that \emph{every} lift of $\alp$ to $\U/\Z$ also lifts to $\U$.
\end{rem}

Let us fix a homomorphism $\alp\colon \Gam_k \lrar \A$.  Using the basis $\{e_{i,i+1}\}$ above, we will think of~$\alp$
as a tuple $\{\alp_{i}=[a_{i,i+1}]\}_{i=0}^{n-1}$ of elements of $H^1(k,\FF_p) = \Hom(\Gam_k,\FF_p)$. 
Consider the embedding problem depicted by the diagram of profinite groups
\begin{equation}
\begin{aligned}
\label{e:embedding}
\xymatrix@R=3ex{
& & & \Gam_k \ar^{\alp}[d] \ar@{-->}[dl] & \\
1 \ar[r] & \U^1 \ar[r] & \U \ar[r] & \A \ar[r] & 1 .\\
}
\end{aligned}
\end{equation}
Following Pál and Schlank \cite[\S 9]{PS16}, we associate with this embedding problem a homogeneous space $V$ over $k$, as follows.
Let $N=|\U|+1$ and consider $\SL_N$ as an algebraic group over $k$. Embed $\U$ into $\SL_{N}(k)$ via its augmented regular representation (where the additional dimension is used to fix the determinant). Let $T_\alp \lrar \spec(k)$ be the $k$-torsor under $\A$ determined by the homomorphism $\alp\colon\Gam_k \lrar \A$, viewed as a $1$-cocycle.
We let $V$ be the quotient variety
\begin{align}
\label{eq:defv}
V = (\SL_{N} \times T_\alp)/\U\rlap{,}
\end{align}
where~$\U$ acts on~$\SL_{N}$ by right multiplication,
on $T_\alp$ via the homomorphism $\U \lrar \A$,
and on $\SL_{N} \times T_\alp$, on the right, by the diagonal action.
The left action of $\SL_N$ on the first factor of the product $\SL_N \times T_\alp$ descends uniquely to $V$, exhibiting it as a homogeneous space of $\SL_N$ with geometric stabilizer $\U^1$.

By Corollary~\ref{c:dwyer}, we have that $0 \in \<\alp_0,\dots,\alp_{n-1}\>$ if and only if the dotted lift in~\eqref{e:embedding} exists, i.e., if and only if the embedding problem~\eqref{e:embedding} is solvable. On the other hand, by~\cite[Theorem 9.6]{PS16} the set of solutions to~\eqref{e:embedding} up to conjugacy by~$\U^1$ is in one-to-one correspondence with the set of $\SL_N(k)$-orbits of $V(k)$. We therefore conclude the following:

\begin{prop}[{\cite[Theorem 9.21]{PS16}}]\label{p:splitting}
The Massey product of $\alp_0,\dots,\alp_{n-1}$ contains~$0$ if and only if $V(k) \neq 0$. In other words, $V$ constitutes a \emph{splitting variety} for the $n$-fold Massey product of $\alp_0,\dots,\alp_{n-1}$.
\end{prop}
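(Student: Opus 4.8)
The plan is to assemble two facts. The first, furnished by Corollary~\ref{c:dwyer}(2), is that $0\in\<\alp_0,\dots,\alp_{n-1}\>$ holds exactly when the homomorphism $\alp\colon\Gam_k\lrar\A$ lifts to a homomorphism $\Gam_k\lrar\U$, that is, exactly when the embedding problem~\eqref{e:embedding} is solvable. The second is the dictionary of Pál and Schlank \cite[Theorem~9.6]{PS16}: the $\SL_N(k)$-orbits on $V(k)$ are in bijection with the $\U^1$-conjugacy classes of solutions of~\eqref{e:embedding}. Combining the two, $V(k)\neq\emptyset$ if and only if \eqref{e:embedding} is solvable if and only if $0\in\<\alp_0,\dots,\alp_{n-1}\>$; and ``splitting variety'' is merely a name for this equivalence, so nothing more is needed once both facts are in hand. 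First I would record this two-step reduction.

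Only the Pál--Schlank dictionary is non-formal, so I would then spell out a direct argument for it in the present situation. Since the diagonal right $\U$-action on $\SL_N\times T_\alp$ is free (being already free on the $\SL_N$-factor), the map $q\colon\SL_N\times T_\alp\lrar V$ is a $\U$-torsor, $\U$ being a finite constant $k$-group. Given $v\in V(k)$, I would pull $q$ back along $v$ to obtain a $\U$-torsor over $k$, hence a class in $H^1(\Gam_k,\U)$ represented by a homomorphism $\tilde\alp\colon\Gam_k\lrar\U$; pushing this class forward along the $\U$-equivariant second projection $\SL_N\times T_\alp\lrar T_\alp$ (equivariant through $\U\surj\A$) identifies the $\A$-torsor attached to the composite of $\tilde\alp$ with $\U\surj\A$ with $T_\alp$ itself, which (as $\A$ is abelian with trivial Galois action) pins $\tilde\alp$ down to a genuine lift of $\alp$, so that~\eqref{e:embedding} is solvable. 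Conversely, given a lift $\tilde\alp$ of $\alp$, I would use the identification $(\SL_N\times\A)/\U\isoto\SL_N/\U^1=:V_0$ valid when $\alp$ is trivial, together with the fact that $V$ is the twist of $V_0$ by $\alp$ via the residual right action of $\A=\U/\U^1$ on $\SL_N/\U^1$; the generalized Hilbert Theorem~90, which asserts that $H^1(k,\SL_N)$ is trivial, then lets me write $\tilde\alp(\sig)=g^{-1}\,{}^{\sig}g$ for a fixed $g\in\SL_N(\bark)$, and a direct computation shows that the coset $g\U^1\in V_0(\bark)$ is invariant under the $\alp$-twisted Galois action, hence defines a point of $V(k)$. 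En route this reconfirms that the geometric stabilizer of $V$ is $\U^1$, as asserted after~\eqref{eq:defv}.

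The main obstacle I anticipate is organizational rather than conceptual: one must keep the left $\SL_N$-action cleanly separated from the right $\U$- and $\A$-actions throughout, fix the twisting and sign conventions for $T_\alp$ once and for all, and check that the homomorphism $\tilde\alp$ produced from a rational point lifts $\alp$ on the nose, and not $-\alp$ or $\alp$ merely up to conjugacy. The single genuinely non-formal input is the triviality of $\SL_N$-torsors over an arbitrary field, which is precisely why $\U$ is embedded into $\SL_N$ via its augmented regular representation in the construction of $V$ in~\eqref{eq:defv}.
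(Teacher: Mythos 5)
Your proposal is correct and follows the paper's own route: the proof is exactly the combination of Corollary~\ref{c:dwyer}(2) (lifting $\alp$ to $\U$ is equivalent to $0\in\<\alp_0,\dots,\alp_{n-1}\>$) with the Pál--Schlank correspondence \cite[Theorem~9.6]{PS16} between $\SL_N(k)$-orbits on $V(k)$ and solutions of the embedding problem~\eqref{e:embedding} up to $\U^1$-conjugacy. Your additional second paragraph, which reproves that correspondence directly via the $\U$-torsor $\SL_N\times T_\alp\lrar V$, the twisting description of $V$ and the vanishing of $H^1(k,\SL_N)$, is sound but goes beyond what the paper does, since the paper simply cites the result.
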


Using homogeneous spaces as splitting varieties opens the door to an application of the previous work of the authors \cite{HW18} 
to establish a local-global principle for the vanishing of Massey products, subject to a suitable Brauer--Manin obstruction. For this, recall first that if~$k$ has characteristic~$0$,
the \emph{unramified} Brauer group $\Br_{\nr}(V) \subseteq \Br(V)$ of a smooth $k$-variety $V$ is defined to be the image of $\Br(V^c)$ in $\Br(V)$, where $V^c$ is any choice of a smooth compactification of $V^c$ (the resulting subgroup of $\Br(V)$ is in fact independent of the choice of $V^c$). When~$k$ is a number field, there is a natural pairing
\[ \Br_{\nr}(V) \times \prod_{v \in \Om_k} V(k_v)  \to \QQ/\ZZ\text{,} \quad\quad (\beta,(x_v)) \mapsto \sum_{v \in \Om_k}\inv_v\beta(x_v)\rlap{,} \]
where $\Om_k$ is the set of places of $k$, and for every $v \in \Om_k$, we denote by $\inv_v\colon\Br(k_v) \to \QQ/\ZZ$ the canonical invariant map from class field theory.  
Here we note that $\beta$ belonging to the unramified Brauer group ensures that the sum on the right hand side has finitely many non-zero terms.
This pairing, also known as the Brauer--Manin pairing, was first defined by Manin \cite{Man71} for the purpose of constructing an obstruction to the local-global principle for rational points. More precisely, the global reciprocity law of class field theory implies that the value of this pairing vanishes on $(x_v)$ if the latter is the diagonal image of a $k$\nobreakdash-rational point $x$, and so the image of $V(k)$ in $\prod_{v \in \Om_k}V(k_v)$ is contained in the subspace of
 $\prod_{v \in \Om_k} V(k_v)$ consisting of those families that are orthogonal to $\Br_{\nr}(V)$ with
respect to the above pairing.  This subspace is referred to as the Brauer--Manin set of $V$. The non-emptiness of the Brauer--Manin set is then a necessary condition for the existence of $k$-rational points on $V$, a phenomenon now known as the \emph{Brauer--Manin obstruction}. 

\begin{thm}[{\cite[Théorème B]{HW18}}]
\label{t:supersolvable}
Let $k$ be a number field. Let $V$ a homogeneous space of a semi-simple and simply connected linear group $G$ with finite supersolvable geometric stabilizers, and let $V^c$ be a smooth compactification of $V$. Then $V^c(k)$ is dense in the Brauer--Manin set of $V^c$ with respect to the product of the $v$-adic topologies. 
In particular, if $V$ admits a collection of local points $(x_v) \in \prod_{v \in \Om_k}V(k_v)$ which is orthogonal $\Br_{\nr}(V)$, then $V$ has a $k$-rational point.
\end{thm}

In the statement of Theorem~\ref{t:supersolvable}, the supersolvability condition on the geometric stabilizers of $V$ takes into account the \emph{outer action} of $\Gam_k$ on these groups, 
induced from the (honest) conjugation action of the middle term on the left term in the short exact sequence
\begin{equation}\label{eq:fundamental-sequence} 
1 \to H_{\ovl{x}} \to G_{\ovl{x}} \to \Gam_k \to 1,
\end{equation}
where $H_{\ovl{x}}$ is the stabilizer of the geometric point $\ovl{x}$ and $G_{\ovl{x}} \subseteq G(\ovl{k}) \rtimes \Gam_k$ is the subgroup consisting of those pairs $(g,\sig)$ such that $g\sig(\ovl{x}) = \ovl{x}$, see~\cite[\textsection 2.3]{demarchelucchinireduction} for a discussion of three equivalent approaches to this construction. Namely, if $H_{\ovl{x}}$ is such a stabilizer,
then we require it to admit
a finite filtration
\[ \{1\} = H_0 \subseteq H_1 \subseteq ... \subseteq H_m = H_{\ovl{x}}\rlap{,} \]
such that each $H_i$ is a normal subgroup of $H_{\ovl{x}}$ stable under the outer Galois action, and each $H_{i+1}/H_i$ is cyclic, see \cite[Définition 6.4]{HW18}. 

\begin{rem}\label{r:change-base-point}
The stabiliser $H_{\ovl{x}}$ and the outer Galois action on it are independent of the choice of $\ovl{x}$ in the following sense.
Given another geometric point $\ovl{x}'$ and an $r \in G(\ovl{k})$ such that $r\ovl{x} = \ovl{x}'$, we obtain an isomorphism $G_{\ovl{x}} \to G_{\ovl{x}'}$  by mapping $(g,\sig)$
to $(rg\sig(r)^{-1},\sig)$. This isomorphism preserves the projection to $\Gam_k$, and hence the induced isomorphism $T\colon H_{\ovl{x}}\isoto H_{\ovl{x}'}$ preserves the outer Galois action. Though the choice of $r$ is not unique, any other $r$ sending $\ovl{x}$ to $\ovl{x}'$ will differ from it by an element of $H_{\ovl{x}'}$, and hence the resulting isomorphism $T$ will differ by an inner automorphism of $H_{\ovl{x}'}$. In particular $H_{\ovl{x}}$ and $H_{\ovl{x}'}$ are isomorphic as groups with outer Galois action, with an isomorphism that is canonical up to an inner isomorphism. 
\end{rem}

Let us now unwind the definitions to see what the sequence~\eqref{eq:fundamental-sequence} becomes in the case of the homogeneous $V = (\SL_N \times T_{\alp})/\U$ introduced in~\eqref{eq:defv}. Write 
\( \pi\colon \SL_N \times T_{\alp} \to V \)  
for the quotient map and let $\ovl{x} = \pi(1,z)$ be the image of a point of the form $(1,z) \in \SL_{N}(\ovl{k}) \times T_{\alp}(\ovl{k})$, so that the stabiliser of $\ovl{x}$ in $\SL_N(\ovl{k})$ coincides, by construction, with the embedded subgroup $\U^1 \subseteq \SL_{N}(k)$.
The short exact sequence~\eqref{eq:fundamental-sequence} can then be identified
with the pull-back along $\alp\colon \Gam_k \to A$ of the sequence appearing in~\eqref{e:embedding}, as is revealed by the computation
\begin{align*}
G_{\ovl{x}} & = \{(g,\sig) \in \SL_N(\ovl{k}) \times \Gam_k \;|\; g\sig(\ovl{x})
 = \ovl{x}\} \\
&= \{(g,\sig) \in \SL_N(\ovl{k}) \times \Gam_k \;|\; \pi(g,\sig(z)) = \pi(1,z)\}
\\ &=
\{(g,\sig) \in \U \times \Gam_k \;|\; [g] = \alp(\sig) \in A\} = \U \times_A \Gam_k\rlap,
\end{align*}
where $[g]$ stands for the class of $g \in \U$ in $A = \U/\U^1$. 
We deduce that the outer action on $\U^1$ as a geometric stabiliser in $V$ coincides
with the one
obtained by restricting along $\alp\colon \Gam_k \lrar \A$
the outer action of $\A=\U/\U^1$ on~$\U^1$
induced by the short exact sequence appearing in~\eqref{e:embedding}. 

\begin{prop}\label{p:fibration}
Assume that~$k$ is a number field.
The homogeneous space $V$ introduced in~\eqref{eq:defv} has a rational point if and only if it carries a collection of local points $(x_v) \in \prod_v V(k_v)$ which is orthogonal to the unramified Brauer group $\Br_{\nr}(V)$.
\end{prop}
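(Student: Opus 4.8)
The plan is to deduce Proposition~\ref{p:fibration} from the main descent-type theorem of~\cite{HW18}, which establishes exactly such a local-global principle with Brauer--Manin obstruction for homogeneous spaces of semisimple simply connected groups whose geometric stabilizers are finite and \emph{supersolvable}, i.e.\ admit a finite filtration by Galois-stable normal subgroups with cyclic successive quotients. The variety $V$ of~\eqref{eq:defv} is by construction a homogeneous space of $\SL_N$, which is semisimple and simply connected, with geometric stabilizer $\U^1$, a finite $p$-group. So the only thing to verify in order to apply~\cite{HW18} is that $\U^1$, together with its outer Galois action (which here is in fact trivial, since $\U^1$ is a group scheme over~$k$ with $k$-points, being a subgroup of $\U \subseteq \SL_N(k)$), is supersolvable in the required sense.

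The key step is therefore the following elementary group-theoretic observation: \emph{any finite $p$-group is supersolvable, and moreover the lower central series (or any refinement of it to a chief series) is a filtration by characteristic subgroups with cyclic quotients.} Concretely, for $\U^1$ one takes the lower central series $\U^1 \supseteq \U^2 \supseteq \dots \supseteq \U^{n-1} \supseteq \U^n = \{1\}$ already introduced in the excerpt; each $\U^m$ is characteristic in $\U$, hence normal in $\U^1$, and since the quotients $\U^m/\U^{m+1}$ are elementary abelian $p$-groups one refines this series by inserting finitely many intermediate normal subgroups so that all successive quotients become cyclic of order~$p$. One checks that these intermediate subgroups can be chosen to be normal in~$\U^1$ and stable under $\Aut(\U^1)$ (for instance by choosing them compatibly with a flag inside each graded piece and invoking normality of $\U^m$ in $\U$); in the present situation Galois acts trivially, so the invariance condition is automatic. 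This produces a filtration of $\U^1$ of the type required by the hypotheses of~\cite{HW18}.

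With the supersolvability of the geometric stabilizer established, I would then simply quote the relevant theorem of~\cite{HW18} applied to $V$: since $V$ is a smooth homogeneous space of $\SL_N$ over the number field~$k$ with finite supersolvable geometric stabilizer, $V$ has a $k$-rational point as soon as the Brauer--Manin set $V(\mathbf{A}_k)^{\Br_{\nr}(V)}$ is non-empty, where one uses the unramified Brauer group (so that the obstruction set depends only on the smooth proper birational class of $V$ and is computed via evaluation of unramified classes on adelic points). The converse implication --- that a rational point yields an adelic point orthogonal to every Brauer class --- is the standard fact that the image of $V(k)$ in $V(\mathbf{A}_k)$ lies in the Brauer--Manin set, which follows from the reciprocity law $\sum_v \inv_v(\mathcal{A}(x)) = 0$ in $\Br(k)$ for any $\mathcal{A} \in \Br(V)$ and any $x \in V(k)$.

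The main obstacle is not conceptual but one of bookkeeping: one must make sure that the precise hypotheses of the theorem imported from~\cite{HW18} are met --- in particular that the notion of ``supersolvable with Galois-invariant filtration'' used there matches the filtration exhibited for $\U^1$, and that the statement is formulated with the \emph{unramified} Brauer group (rather than the algebraic Brauer group of a smooth compactification, which is a priori smaller) so that Proposition~\ref{p:fibration} comes out verbatim. Since $\U^1$ has trivial Galois action, all equivariance subtleties evaporate, and the verification reduces to the purely group-theoretic fact that a finite $p$-group admits a chief series, together with the observation that $\SL_N$ is semisimple simply connected; both are routine.
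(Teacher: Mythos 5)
There is a genuine gap, and it sits exactly at the only point that requires an argument. Your appeal to \cite[Théorème~B]{HW18} needs $\U^1$ to be supersolvable \emph{as a group equipped with its outer Galois action} (\cite[Définition~6.4]{HW18}), i.e.\ the filtration with cyclic quotients must be invariant under that outer action. You dismiss this by asserting that the outer Galois action on the stabilizer is trivial because $\U^1$ is a constant subgroup of $\SL_N(k)$. That is false: $V$ is not $\SL_N/\U^1$ but the twist $(\SL_N\times T_\alp)/\U$, and a geometric point $\bar v$ involves a point of $T_\alp(\ovl k)$ that is not Galois-fixed. The resulting outer action of $\Gam_k$ on $H_{\bar v}\cong \U^1$ factors through $\alp:\Gam_k\lrar \A=\U/\U^1$ acting by conjugation, and this action is genuinely nontrivial (this is visible in formula~\eqref{e:T} for the induced action on $\B=(\U^1)^{\ab}$, and the whole Brauer-group analysis of \S\ref{s:brauer-1} depends on it). Your fallback --- choosing a refinement of the lower central series that is stable under all of $\Aut(\U^1)$ --- is both unjustified and in general impossible: the graded pieces are $\FF_p$-vector spaces of rank $>1$, and no complete flag in such a piece is preserved by the full automorphism group; likewise, your ``elementary observation'' conflates the lower central series (characteristic, but with non-cyclic quotients) with a chief series (cyclic quotients, but not characteristic).

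The correct verification, which is what the paper does, uses precisely the structure you discarded: since the outer Galois action factors through conjugation by $\U$ (via $\alp$), it suffices to produce a filtration of $\U^1$ by subgroups normal in $\U$ with cyclic successive quotients. Take the lower central series $\U^{n}\subseteq\dots\subseteq\U^1$ of $\U$; because $[\U,\U^m]\subseteq\U^{m+1}$, the conjugation action of $\U$ on each quotient $\U^m/\U^{m+1}$ is \emph{trivial}, so any refinement of this filtration obtained by inserting a full flag of subgroups in each graded piece is automatically $\U$-invariant, hence invariant under the outer Galois action, and has cyclic quotients. This yields supersolvability in the sense of \cite[Définition~6.4]{HW18}, after which the citation of \cite[Théorème~B]{HW18} and the standard reciprocity argument for the easy direction go through as you describe.
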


\begin{proof}
By Theorem~\ref{t:supersolvable} and the preceding discussion,
it suffices to show that $\U^1$ is supersolvable as a group equipped with an outer Galois action, 
where the outer Galois action is the one 
obtained by restricting along $\alp\colon \Gam_k \lrar \A$
the outer action of $\A=\U/\U^1$ on~$\U^1$
induced by the short exact sequence in~\eqref{e:embedding}.
It therefore suffices to check that~$\U^1$ admits a filtration
\[1=H^l \subseteq \dots \subseteq H^1=\U^1\] 
by  
subgroups $H^i \subseteq \U^1$ such that
each~$H^i$ is normal in $\U$  
and each successive quotient $H^i/H^{i+1}$ is cyclic.
Now $\U$ is a nilpotent group with lower central series $\{\U^m\}_{m=0}^{n}$. 
In particular, if we consider $\U^{n} \subseteq \dots \subseteq \U^1$ as a filtration of $\U^1$, then each step is normal in~$\U$, each successive quotient is abelian, and the action of $\U$ on each successive quotient is trivial.  
This filtration can consequently be refined
to a filtration of the same nature with each successive quotient furthermore being cyclic.
\end{proof}

In order to exploit Proposition~\ref{p:fibration}
it is necessary to have some control over the unramified Brauer group $\Br_{\nr}(V)$.
A relatively more accessible part of $\Br_{\nr}(V)$
is the subgroup
\[ \Br_{1,\nr}(V) := \Ker[\Br_{\nr}(V) \lrar \Br(\ovl{V})] \subseteq \Br_{\nr}(V) \rlap{,} \]
consisting of those Brauer elements which vanish over the base change $\ovl{V} := V \otimes_k \ovl{k}$ of $V$ to the algebraic closure of $k$. We will refer to $\Br_{1,\nr}(V)$ as the algebraic unramified Brauer group. To control the Brauer--Manin obstruction coming from this part of the Brauer group, we establish in the next section an explicit formula for $\Br_{1,\nr}(V)$ when $V$ is a homogeneous space of a semi-simple simply connected algebraic group
with finite geometric stabilizers, following work of Harari~\cite{Har07}, Demarche~\cite{Dem10} and Lucchini Arteche~\cite{LA14}.

\section{Algebraic unramified Brauer groups of homogeneous spaces of semi-simple simply connected groups with finite stabilisers}

We fix, in this section, a field~$k$ of characteristic~$0$
with algebraic closure~$\ovl{k}$ and absolute Galois group $\Gam_k= \Gal(\ovl{k}/k)$, a homogeneous space~$V$ of a semi-simple
simply connected linear algebraic group over~$k$ (for instance of~$\SL_N$)
and a geometric point $\bar v \in V(\bark)$, and we let $\ovl{V} := V \otimes_k \ovl{k}$ be the base change of $V$ to $\ovl{k}$.  We assume that the stabilizer $H_{\bar v}$
of~$\bar v$ is finite.
We recall that there is a natural outer action of~$\Gam_k$ on~$H_{\bar v}$
(as described in~\S\ref{s:dwyerpalschlank})
and that the Cartier dual $\widehat{H}_{\bar v}^\ab=\Hom(H_{\bar v}^\ab,{\ovl{k}^*})=\Hom(H_{\bar v}^\ab,\mmu_{\infty})$
of its abelianization $H_{\bar v}^\ab$ is canonically isomorphic
to $\Pic(\ovl{V})$
as a $\Gam_k$\nobreakdash-module
(see \cite[\textsection5]{HW18};
here $\mmu_{\infty}$ denotes the torsion subgroup of~$\ovl{k}^*$).
As a consequence, the Hochschild--Serre spectral sequence provides an exact sequence
\begin{align}
\label{se:hs}
\Br(k) \lrar \Br_1(V) \lrar H^1(k,\widehat{H}_{\bar v}^\ab) \xrightarrow{\del} H^3(k,\ovl{k}^*)\rlap{.}
\end{align}
We denote by $\Br_0(V) \subseteq \Br_{1,\nr}(V) \subset \Br_1(V)$ the image of~$\Br(k)$.

\begin{rem}
\label{rem:deltavanishes}
The differential~$\delta$ vanishes when $V(k)\neq\emptyset$, or more generally when~$V$ possesses a $0$\nobreakdash-cycle
of degree~$1$ (see \cite[Remark~5.4.3]{ctskobook}), as well as when
$H^3(k,\ovl{k}^*)=0$, which occurs for instance when~$k$ is a number field or is the function
field of a curve over a number field (see \cite[Lemma~2.6]{ctpoonen}).
\end{rem}

The goal of this section is to describe,
in Proposition~\ref{p:formula} below, a formula for
the quotient group $\Br_{1,\nr}(V)/\Br_0(V)$, viewed as a subgroup of
$H^1(k,\widehat{H}_{\bar v}^\ab)$ via~\eqref{se:hs}. This yields an explicit description of the group $\Br_{1,\nr}(V)/\Br_0(V)$ at least when the differential $\delta$ vanishes, as will be the case when we apply the proposition in subsequent parts of the paper.
The formula we put forward builds upon a series of formulae established by
previous authors: by Harari~\cite[Proposition~4]{Har07} when $V(k)\neq\emptyset$ and~$k$
is a number field, by Demarche~\cite[Théorème~1]{Dem10} when~$V(k)$ is allowed to be empty but~$k$ is a number field,
and by Lucchini Arteche~\cite[Th\'eor\`eme~4.15]{LA14}
when $V(k)\neq\emptyset$ and~$k$ is arbitrary.

\begin{define}
For a group $H$ of exponent $d$, consider the action of $(\ZZ/d)^*$ on the set $H/(\mathrm{conjugacy})$ of conjugacy classes of $H$ via $[x] \mapsto [x^i]$ for $i \in (\ZZ/d)^*$. By the \emph{outer exponent} of $H$ we will mean the smallest divisor $e | d$ such that this action factors through the quotient map $(\ZZ/d)^* \lrar (\ZZ/e)^*$.
\end{define}

Let $e$ denote the outer exponent of $H_{\bar v}$.
We fix a finite
Galois subextension~$L/k$
of~$\ovl{k}/k$, with Galois group $G=\Gal(L/k)$, satisfying the following three conditions:
\begin{enumerate}
\item $L$ contains all $e$-th roots of unity;
\item $V(L)\neq\emptyset$;
\item the natural outer action of~$\Gamma_k$ on~$H_{\bar v}$ factors through $G=\Gal(L/k)$.
\end{enumerate}
In particular, the action of $\Gam_k$ on both $H_{\bar v}^\ab$ and $\widehat{H}_{\bar v}^\ab$ factors through $G$: we may, and will, consider
these two abelian groups
as $G$\nobreakdash-modules
and view $H^1(G,\widehat{H}_{\bar v}^\ab)$ as a subgroup of $H^1(k,\widehat{H}_{\bar v}^\ab)$ via
the inflation map
$H^1(G,\widehat{H}_{\bar v}^\ab) \hookrightarrow H^1(k,\widehat{H}_{\bar v}^\ab)$.

We let $\chi\colon G \lrar (\ZZ/e)^*$ be the homomorphism induced by the cyclotomic character
and remark that the
outer action of~$G$ on $H_{\bar v}$ induces an action of~$G$ on
the set
$H_{\bar v}/(\mathrm{conjugacy})$
 of conjugacy classes
of the group~$H_{\bar v}$.

Let $\sig \in G$ and $u \in H_{\bar v}$ satisfy the equality
$\sig[u] = [u^{\chi(\sig)}]$ in $H_{\bar v}/(\mathrm{conjugacy})$. Then the image $\ovl{u}$ of $u$ in $H_{\bar v}^{\ab}$ satisfies $\sig(\ovl{u}) = \chi(\sig)\ovl{u}$, and is hence $\sig$-invariant when considered as an element of the twisted Cartier dual $\Hom(\widehat{H}_{\bar v}^{\ab},\ovl{\mmu}_\infty)$ of $\widehat H_{\bar v}^{\ab} = \Hom(H_{\bar v}^{\ab},\mmu_{\infty})$, where $\ovl{\mmu}_\infty$
denotes the group of roots of unity in~$\ovl{k}^*$ equipped with the \emph{trivial}
Galois action.
(This bidual is canonically isomorphic to $H_{\bar v}^{\ab}$ as an abelian group,
with the Galois action twisted by $\chi$.)
In particular, if we let~$\ZZ$ act on
$\Hom(\widehat{H}_{\bar v}^{\ab},\ovl{\mmu}_\infty)$ via~$\sig$,
then
we may view $\ovl{u}$ as an element of $H^0(\ZZ,\Hom(\widehat{H}_{\bar v}^{\ab},\ovl{\mmu}_\infty))$.
We hence obtain a composite map 
\begin{align*}
\xymatrix@C=3.5em{
H^1(G,\widehat{H}_{\bar v}^\ab) \ar[r]^{\sigma^*} &
H^1(\ZZ,\widehat{H}_{\bar v}^\ab) \ar[r]^(.43){x\mapsto \ovl{u}\cup x} &
H^1(\ZZ,\ovl{\mmu}_\infty) = \ovl{\mmu}_\infty \rlap{,}
}
\end{align*}
where~$\sigma^*$ is the pull-back map along $\ZZ\to G$, $1\mapsto \sigma$.

\begin{prop}[Harari, Demarche, Lucchini Arteche]\label{p:formula}
There is an equality
\begin{align*}
\Br_{1,\nr}(V)/\Br_0(V) = \Big\{& \beta \in H^1(G,\widehat{H}_{\bar v}^\ab) \quad\Big|\quad  \del(\bet) = 0 \quad\text{and}\quad
\forall \sig\in G, \forall u \in H_{\bar v},\\
& \text{ if } \sig[u] = [u^{\chi(\sig)}] \text{ in } H_{\bar v}/(\mathrm{conjugacy})\text{, then } \ovl{u}\cup \sigma^*\bet = 1 \in \ovl{\mmu}_\infty\Big\}
\end{align*}
of subgroups of $H^1(k,\widehat{H}_{\bar v}^\ab)$, where~$\delta$ is the differential appearing in~\eqref{se:hs}.
\end{prop}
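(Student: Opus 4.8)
The plan is to reduce Proposition~\ref{p:formula} to the cited formulae of Harari, Demarche, and Lucchini Arteche by a dévissage on the base field, using the standard restriction--corestriction and inflation--restriction machinery. The existing results give the computation of $\Br_{1,\nr}(V)/\Br_0(V)$ either over a number field (Harari when $V(k)\neq\emptyset$, Demarche in general) or over an arbitrary field when $V(k)\neq\emptyset$ (Lucchini Arteche). The case we need---arbitrary~$k$ of characteristic~$0$, possibly empty $V(k)$---is not literally covered, so the first task is to explain why it follows formally. The key observation is that by condition~(2) we have $V(L)\neq\emptyset$, so Lucchini Arteche's theorem applies to $V_L$ over~$L$ and, more to the point, the obstruction-theoretic description is already ``defined over~$L$'' in the sense that $\Gamma_L$ acts trivially on $H_{\bar v}$ and on $\widehat{H_{\bar v}^{\ab}}$; combined with the three conditions imposed on $L/k$, everything in the formula is controlled by the finite group $G=\Gal(L/k)$.

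First I would set up the comparison of Hochschild--Serre spectral sequences for the tower $\bar k / L / k$ applied to the fibration describing~$V$, so that $H^1(G,\widehat{H_{\bar v}^{\ab}}) = H^1(k,\widehat{H_{\bar v}^{\ab}})^{\Gamma_L\text{-triv}}$ sits inside $H^1(k,\widehat{H_{\bar v}^{\ab}})$ via inflation, as already noted before the statement. The point is that the image of $\Br_{1,\nr}(V)$ in $H^1(k,\widehat{H_{\bar v}^{\ab}})$ lands inside this inflated subgroup: an algebraic unramified Brauer class becomes, after base change to~$L$, a class in $\Br_{1,\nr}(V_L)$ whose image in $H^1(L,\widehat{H_{\bar v}^{\ab}})$ is described by Lucchini Arteche's formula applied over~$L$; since $\Gamma_L$ acts trivially on all the relevant modules, that formula forces the restriction to $\Gamma_L$ to be trivial, whence the class is inflated from~$G$. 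So without loss of generality we work inside $H^1(G,\widehat{H_{\bar v}^{\ab}})$ throughout.

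Next I would identify, for $\beta\in H^1(G,\widehat{H_{\bar v}^{\ab}})$ inflated to $H^1(k,\cdot)$, the precise residue condition at a place/valuation that makes $\beta$ unramified, and match it with the condition ``$\sig[u]=[u^{\chi(\sig)}] \Rightarrow \ovl u_*\sig^*\beta = 1$''. The mechanism is the one already visible in the paragraph preceding the Proposition: given $\sig\in G$ and $u\in H_{\bar v}$ with $\sig[u]=[u^{\chi(\sig)}]$, the image $\ovl u \in H_{\bar v}^{\ab}$ satisfies $\sig(\ovl u)=\chi(\sig)\ovl u$, so evaluation at $\ovl u$ is a $\sig$-equivariant map $\widehat{H_{\bar v}^{\ab}}\to\ovl\mmu_\infty$ and yields the composite $\ovl u_*\sig^* : H^1(G,\widehat{H_{\bar v}^{\ab}})\to\ovl\mmu_\infty$. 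Geometrically, such a pair $(\sig,u)$ corresponds to a codimension-one point of a suitable smooth compactification of~$V$ (or of an associated quotient/twist), with $\sig$ the local monodromy/Frobenius-type element and $u$ a generator of the inertia, and the residue of the Brauer class at that point is exactly $\ovl u_*\sig^*\beta$; conversely every divisorial valuation of $\ovl k(V)$ on which $\beta$ could ramify arises this way. I would assemble this from the same input used in \cite{Har07}, \cite{Dem10}, \cite{LA14}---namely the description of $\Pic(\ovl V)$, the exact sequence~\eqref{se:hs}, and purity---rather than redo it by hand.

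The main obstacle is the second step: making rigorous that the only valuations contributing residues are those indexed by pairs $(\sig,u)$ with the stated compatibility, in the generality of an arbitrary characteristic-zero field (where we cannot invoke places of a number field and must instead argue via divisorial valuations and the behaviour of the Brauer group under the universal torsor / Borel--Serre type compactification). Once that identification is in place, the equality of the two subgroups of $H^1(k,\widehat{H_{\bar v}^{\ab}})$ is immediate: an inflated class $\beta$ is unramified if and only if all its residues vanish, which is precisely the displayed condition. I would therefore structure the write-up as: (i) reduce to $L$ and to $G$; (ii) cite Lucchini Arteche for the $V(L)\neq\emptyset$ case over~$L$; (iii) descend along $G$ using restriction--corestriction (the order of~$G$ being invertible issues do not arise since we only need injectivity of inflation on the relevant subgroups, which holds because $\widehat{H_{\bar v}^{\ab}}$ is finite and we are comparing with an explicitly $\Gamma_L$-trivial subgroup); and (iv) translate the unramifiedness condition over~$k$ into the $(\sig,u)$-condition, which is the step requiring the most care and where I would lean on the geometric description of divisors on a compactification of~$V$ already developed in \cite{HW18}.
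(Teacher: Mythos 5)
There is a genuine gap, and it sits exactly where you acknowledge the "main obstacle". The case needed here---arbitrary $k$ of characteristic $0$ with $V(k)$ possibly empty---is not covered by \cite{Har07}, \cite{Dem10} or \cite{LA14}, and your step (iv) proposes to close it by matching pairs $(\sig,u)$ with $\sig[u]=[u^{\chi(\sig)}]$ to divisorial valuations of a smooth compactification and identifying $\ovl{u}_*\sig^*\beta$ with the residue there. That identification is not something you can "assemble" from the cited inputs: it \emph{is} the hard content of Lucchini Arteche's theorem, whose proof uses a point of $V(k)$ in an essential way, and the claimed clean bijection between such pairs and ramification divisors is neither justified nor how the known formulae are established. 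As written, your plan amounts to reproving the cited theorem in a strictly greater generality, which is precisely what a proof of the proposition must avoid or accomplish. The paper's route sidesteps this entirely by a short base-change argument you do not use: since unramifiedness is detected by residues along codimension-one points, and since for any extension $k'/k$ in which $k$ is algebraically closed both $H^1(k,\widehat{H_{\bar v}^\ab})\to H^1(k',\widehat{H_{\bar v}^\ab})$ and $H^1(K,\QQ/\ZZ)\to H^1(K\otimes_k k',\QQ/\ZZ)$ (for $K$ a residue field of such a point) are injective, the statement is invariant under extending scalars to $k'=k(V)$; over $k(V)$ the generic point gives a rational point, so \cite[Th\'eor\`eme~4.15]{LA14} applies verbatim. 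Your first step (that unramified classes restrict trivially to $\Gamma_L$ and hence lie in the inflated subgroup $H^1(G,\widehat{H_{\bar v}^\ab})$) is fine and agrees with the paper, but it does not substitute for this reduction.

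A second, smaller omission: even once Lucchini Arteche's theorem is applicable, its formula is stated in terms of the maps $N_\sigma$ and requires $b_\sigma(N_\sigma(u))=1$ for \emph{all} $\sigma\in G$ and $u\in H_{\bar v}$, not only for pairs with $\sig[u]=[u^{\chi(\sig)}]$. The proposition's condition is a priori weaker, and one must show it implies the full condition; the paper does this by a cocycle computation, taking $n$ minimal with $\sig^n[u]=[u^{\chi(\sig^n)}]$ and using $b_\sigma(N_\sigma(u))=((1+\sigma+\dots+\sigma^{n-1})b_\sigma)(\ovl{u})=b_{\sigma^n}(\ovl{u})$. Your proposal does not address this translation at all, implicitly treating the displayed formula as if it were literally the cited one.
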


\begin{rem}
As explained in Remark~\ref{rem:deltavanishes},
the condition $\delta(\beta)=0$ appearing in the formula of
Proposition~\ref{p:formula} is vacuous in many cases
of interest.  In general, however, it cannot be dispensed with.
Although we do not provide the details here,
it is possible
to exhibit a homogeneous space~$V$ of~$\SL_N$ with finite (and abelian) geometric stabilizers,
over the field $k=\mathbf C((x))((y))((z))$,
and a class $\beta \in H^1(G, \widehat{H}_{\bar v}^\ab)$,
such that $\delta(\beta)\neq 0$ while~$\beta$ satisfies the other conditions
appearing in the formula of Proposition~\ref{p:formula}.
\end{rem}

In the present paper, we shall mostly apply Proposition~\ref{p:formula} in the following special case. In what follows, for a set $I$ equipped with an action of $G$ and an element $\sig \in G$, we write $I^{\sig}$ for the subset of $I$ fixed by $\sigma$. 
 
\begin{cor}\label{c:special-case}
Suppose that the cyclotomic character $\chi\colon G \to (\ZZ/e)^*$ is trivial. Then
the image of $\Br_{1,\nr}(V)$ in $H^1(k,\widehat{H}^{\ab}_v)$ coincides with
the set
of those $\bet \in H^1(G,\widehat{H}^{\ab}_v) \subseteq H^1(k,\widehat{H}^{\ab}_v)$
with $\del(\bet) = 0$ such that 
for every $\sig \in G$,
if we let~$\ZZ$ act on~$H_{\bar v}^{\ab}$ and on $\widehat{H}_{\bar v}^{\ab}$
through~$\sigma$,
the pull-back
 $\sigma^*\bet \in H^1(\ZZ,\widehat{H}^{\ab}_v)$
is orthogonal, with respect to the cup product pairing
\[ H^0(\ZZ,H_{\bar v}^{\ab}) \times H^1(\ZZ,\widehat{H}_{\bar v}^{\ab}) \to H^1(\ZZ,\ovl{\mmu}_\infty) = \ovl{\mmu}_\infty\rlap, \]
to the image of
the natural map
$(H_{\bar v}/(\mathrm{conjugacy}))^\sigma \to (H_{\bar v}^{\ab})^{\sigma} = H^0(\ZZ,H_{\bar v}^{\ab})$.
\end{cor}

\begin{proof}[Proof of Proposition~\ref{p:formula}]
Let $k'=k(V)$.
Let us consider the commutative diagram with exact rows
\begin{align*}
\xymatrix@R=3ex{
&& H^1(G,\widehat{H}_{\bar v}^\ab) \ar@{^{(}->}[d] &\\
\Br(k) \ar[r] \ar[d] & \Br_1(V) \ar[r] \ar[d] & H^1(k,\widehat{H}_{\bar v}^\ab) \ar@{^{(}->}[d]^(.4)\xi\ar[r]^{\del} & H^3(k,\ovl{k}^*) \ar[d] \\
\Br(k') \ar[r] & \Br_1(V_{k'}) \ar[r] & H^1(k',\widehat{H}_{\bar v}^\ab) \ar[r]^{\del'} & H^3(k',\ovl{k'}^*)\rlap{.}
}
\end{align*}
As~$k$ is algebraically closed in~$k'$, the map $\Gam_{k'} \to \Gam_k$ is surjective and the map $\xi$ in the above diagram is injective. 
We may then consider the group~$G$ as a quotient of~$\Gam_{k'}$
and the inflation map
$H^1(G,\widehat{H}_{\bar v}^\ab) \hookrightarrow H^1(k',\widehat{H}_{\bar v}^\ab)$
coincides with the composition of~$\xi$ with
the inflation map
$H^1(G,\widehat{H}_{\bar v}^\ab) \hookrightarrow H^1(k,\widehat{H}_{\bar v}^\ab)$.
Moreover,
as $V(k')\neq\emptyset$, the map~$\delta'$ vanishes.
Finally, an element of~$\Br(V)$ whose pull-back to~$\Br(V_{k'})$ belongs to the subgroup $\Br_{\nr}(V_{k'})$
itself belongs to the subgroup $\Br_{\nr}(V)$; indeed, 
a class of $\Br(V)$ is unramified
if and only if
its residue
along every irreducible divisor~$D$ of a smooth compactification of~$V$ vanishes,
and for any such~$D$,
 the natural map
$H^1(k(D),\QQ/\ZZ) \lrar H^1(k'(D), \QQ/\ZZ)$ is injective
since~$k(D)$ is algebraically closed in~$k'(D)$.
From all of these remarks, we conclude that in order to prove
Proposition~\ref{p:formula},
we may replace~$k$ and~$V$ with~$k'$ and~$V_{k'}$, and thus assume
that $V(k)\neq\emptyset$.

As the statement of Proposition~\ref{p:formula} does not depend on the choice of~$\bar v$ (see Remark~\ref{r:change-base-point}),
we may then assume that $\bar v \in V(k)$,
so that~$\Gam_k$ acts on~$H_{\bar v}$.
We are now in a position to apply~\cite[Th\'eor\`eme~4.15]{LA14}.
It only remains to check that the formulae given in Proposition~\ref{p:formula}
and in \emph{loc.\ cit.}\  agree.

To this end, we first note
that for $\sig \in \Gam_L$,
the map $N_\sig\colon H_{\bar v} \lrar H_{\bar v}^\ab$ appearing in \emph{loc.\ cit.}\ is
simply the quotient map, since the outer action of~$\Gamma_L$ on~$H_{\bar v}$ is trivial
and~$L$ contains all $e$\nobreakdash-th roots of unity.
Thus
\[\Br_{1,\nr}(V)/\Br_0(V) \subseteq 
\Ker\left(
H^1(k,\widehat{H}_{\bar v}^\ab)\lrar
H^1(L,\widehat{H}_{\bar v}^\ab)\right)=
H^1(G,\widehat{H}_{\bar v}^\ab)\]
as a consequence of \cite[Th\'eor\`eme~4.15]{LA14}.
Let us fix a class $\beta \in H^1(G,\widehat{H}_{\bar v}^\ab)$,
a cocycle $b\colon G \lrar \widehat{H}_{\bar v}^\ab$, $\sigma \mapsto b_\sigma$
representing~$\beta$,
and an element $u \in H_{\bar v}$.
We need to prove that the following two conditions are equivalent:
\begin{enumerate}
\item $\ovl{u} \cup \sig^*\bet = 1$
for all
$\sigma \in G$
such that $\sig[u] = [u^{\chi(\sig)}]$ in  $H_{\bar v}/(\mathrm{conjugacy})$;
\item $b_\sigma(N_\sigma(u))=1$
for all $\sigma \in G$.
\end{enumerate}
When $\sig[u] = [u^{\chi(\sig)}]$
in $H_{\bar v}/(\mathrm{conjugacy})$,
one has $N_\sigma(u)=\ovl{u}$ and $b_\sigma(\ovl{u})=\ovl{u} \cup \sig^*\bet$,
hence~(1) is equivalent to
\begin{enumerate}
\item[(2')] $b_\sigma(\ovl{u})=1$
for all $\sigma \in G$
such that $\sig[u] = [u^{\chi(\sig)}]$ in  $H_{\bar v}/(\mathrm{conjugacy})$
\end{enumerate}
and~(2) implies~(2').
It remains to prove~(2), assuming~(2').  
Let~$n$ be the smallest positive integer such that
 $\sig^n[u] = [u^{\chi(\sig^n)}]$
in $H_{\bar v}/(\mathrm{conjugacy})$.
By~(2'), one has $b_{\sigma^n}(\ovl{u})=1$.
On the other hand, one has
$b_\sigma(N_\sigma(u))=((1+\sigma+\dots+\sigma^{n-1})b_\sigma)(\ovl{u})=b_{\sigma^n}(\ovl{u})$,
where the first equality stems from
the definition of~$N_\sigma$
and the second one from the cocycle condition.
Hence~(2) must hold too.
\end{proof}

\section{The Brauer group of the Massey splitting varieties}\label{s:brauer-1}

Let~$k$ be a field of characteristic~$0$.  We take up the notation of
\textsection\ref{s:dwyerpalschlank}; in particular,~$V$ is the homogeneous
space~\eqref{eq:defv} associated with a fixed homomorphism $\alpha\colon\Gam_k \lrar \A$. Our goal in this section is to study the unramified Brauer group $\Br_{\nr}(V)$. For the most part we will be using the formula of Proposition~\ref{p:formula} in order to understand the algebraic part of $\Br_{1,\nr}(V)$. However, before we do so, let us first take a closer look at
the \emph{transcendental} part of $\Br_{\nr}(V)$, namely, the image of $\Br_{\nr}(V)$ in the unramified Brauer group $\Br_{\nr}(\ovl{V})$ of the base change $\ovl{V} := V \otimes_k \ovl{k}$ to the algebraic closure. It is known that the transcendental Brauer group can play a non-trivial role in the local-global principle when $k$ is a number field, 
see, e.g.,~\cite{DLAN15}. Fortunately, in our case we can
deduce from the works of Bogomolov and Michailov that it simply vanishes.

\begin{prop}\label{p:trans}
The group $\Br_{\nr}(\ovl{V})$ is trivial.
\end{prop}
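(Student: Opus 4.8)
The plan is to identify $\ovl V$ explicitly as a homogeneous space of $\SL_{N,\ovl k}$ and recognize that its geometric birational type is governed entirely by the finite group $\U^1$ together with its (now trivial, over $\ovl k$) Galois action. Over $\bark$, the torsor $T_\alp$ becomes trivial, so $\ovl V \cong (\SL_{N,\bark} \times \A)/\U$, which is simply a disjoint union of $|\A|$ copies of $\SL_{N,\bark}/\U^1$ (the quotient of the first factor by the residual action of $\U^1 = \ker(\U \to \A)$). Thus $\Br_{\nr}(\ovl V)$ is $|\A|$ copies of $\Br_{\nr}(\SL_{N,\bark}/\U^1)$, and it suffices to show this latter group vanishes. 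Since $\SL_N$ is special, the quotient $\SL_{N,\bark}/\U^1$ is stably birational over $\bark$ to the quotient $\AA^m_{\bark}/\U^1$ of a faithful linear representation of the finite $p$-group $\U^1$ by a result of the type going back to Bogomolov; in particular $\Br_{\nr}(\ovl V) \cong \Br_{\nr,\bark}(\AA^m/\U^1)$, the unramified Brauer group (over the algebraically closed field $\bark$) of the corresponding quotient variety.

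Next I would invoke the work of Bogomolov and of Michailov alluded to in the text just before the statement. Bogomolov's theory computes $\Br_{\nr}$ of a quotient $\AA^m/G$ for a finite group $G$ over an algebraically closed field of characteristic zero as the "unramified Brauer group" $\Br_{\nr}(G)$, which Bogomolov identified with a subgroup $\mathrm{B}_0(G)$ of $H^2(G,\QQ/\ZZ)$ cut out by the condition of vanishing restriction to all \emph{bicyclic} (i.e. generated by two commuting elements) subgroups of $G$. So the task reduces to the purely group-theoretic assertion that $\mathrm{B}_0(\U^1) = 0$ for the group $\U^1$ arising here, which is a subgroup of the unipotent upper-triangular group $\U$ over $\FF_p$. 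This is precisely the kind of statement established by Michailov for the relevant unitriangular-type $p$-groups; I would cite his computation of $\mathrm{B}_0$ for subgroups of unitriangular groups and check that $\U^1$ falls within its scope (it is generated by the $e_{i,j}$ with $j - i \ge 2$, and its commutator structure is inherited from $\U$). One should double-check the small cases $n = 2$ (where $\U^1$ is central cyclic, hence abelian, hence $\mathrm{B}_0 = 0$ trivially) and $n = 3$ separately, as the general citation may implicitly assume the group is non-abelian or of a certain rank.

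The main obstacle I anticipate is not conceptual but bookkeeping: making precise the reduction from $\Br_{\nr}(\ovl V)$ to $\mathrm{B}_0(\U^1)$, i.e. verifying that $\SL_{N,\bark}/\U^1$ is stably $\bark$-rational over $\AA^m/\U^1$ (this uses that $\SL_N/\U^1$ is a torsor under $\SL_N$ over... no — rather that $\SL_N \to \SL_N/\U^1$ makes $\SL_N$ a torsor under $\U^1$, and $\SL_N$ is rational, so one applies the no-name lemma to pass between $\SL_N/\U^1$ and $V'/\U^1$ for $V'$ a linear representation of $\U^1$), together with the birational invariance of $\Br_{\nr}$. I would set this up carefully using the standard "no-name lemma" for special groups. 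The second, more substantive obstacle is pinning down exactly which result of Michailov applies: one must confirm that the group $\U^1$ — not the full $\U$ — satisfies $\mathrm{B}_0 = 0$, and that the ambient field being merely algebraically closed of characteristic $0$ (rather than $\CC$) causes no difficulty, which it does not since $\Br_{\nr}$ of such quotients is insensitive to the choice of algebraically closed base field of characteristic zero.
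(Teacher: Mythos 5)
Your proposal is correct and takes essentially the same route as the paper: both reduce $\Br_{\nr}(\ovl{V})$ to the Bogomolov multiplier $\mathrm{B}_0(\U^1)$ of the geometric stabilizer (the paper cites Bogomolov's work directly for this identification, leaving the no-name/stable-birationality bookkeeping implicit) and then invoke Michailov's theorem that this multiplier vanishes for $\U^1$. One minor slip: since $\U$ surjects onto $\A$, the quotient $(\SL_{N,\bark}\times\A)/\U$ is connected and is exactly $\SL_{N,\bark}/\U^1$, not a disjoint union of $|\A|$ copies of it---but this only makes your reduction more direct.
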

\begin{proof}
By the classical work of Bogomolov~\cite{Bog88} we know that $\Br_{\nr}(\ovl{V})$ admits a purely group theoretical description in terms of the stabilizer $\U^1$. More precisely, there is an isomorphism
\begin{equation}\label{e:bog}
\Br_{\nr}(\ovl{V}) \cong \Ker\left[H^2(\U^1,\QQ/\ZZ) \lrar \prod_{H\subseteq \U^1} H^2(H,\QQ/\ZZ)\right] 
\end{equation}
where the product ranges over all \emph{bicyclic subgroups} $H \subseteq \U^1$ (equivalently, over all abelian subgroups).
The right hand side of~\eqref{e:bog} is also known as the \emph{Bogomolov multiplier} of~$\U^1$. By~\cite[Theorem 3.1]{Mic13}, the Bogomolov multiplier of $\U^1$ vanishes, hence the result. 
\end{proof}

Next let us turn our attention to the algebraic unramified Brauer group $\Br_{1,\nr}(V)$.
Let $\B = \U^1/[\U^1,\U^1] = \U^1/\U^3$ denote the abelianization of $\U^1$. We have a natural basis $\{\ovl{e}_{i,j}\}$ for $\B$ indexed by the set of ordered pairs $(i,j) \in \{0,\dots,n\}^2$ such that $2 \leq j-i \leq 3$, where $\ovl{e}_{i,j}$ is the image of the element $e_{i,j} \in \U^1$. 
We note that the action of $\A$ on $\B$ induced by the outer action of $\A$ on $\U^1$ via conjugation can be described very explicitly in terms of the basis above: for $\ovl{e}_{i,j} \in \A$ and $\ovl{e}_{k,l} \in \B$ we have 
\begin{equation}\label{e:T} 
\ovl{e}_{i,j}(\ovl{e}_{k,l}) = \left\{\begin{matrix}
\ovl{e}_{k,l} + \ovl{e}_{i,l} & j=k \\ 
\ovl{e}_{k,l} - \ovl{e}_{k,j} & i=l \\
\ovl{e}_{k,l} & j\neq k, i \neq l \rlap{.}
\end{matrix}\right.  
\end{equation}
We consider $\B$ as a Galois module via the map $\alp\colon \Gam_k \lrar \A$, and let $\what{\B} = \Hom(\B,\mmu_{\infty})$ denote the Cartier dual module.
The action of $\U$ on $\U^1$ via conjugation induces an action of $\U$ on $\U^1/(\mathrm{conjugacy})$. By construction this action becomes trivial when restricted to~$\U^1$ and hence descends to a well-defined action of $\A = \U/\U^1$ on $\U^1/(\mathrm{conjugacy})$. We note that this action is compatible with the action of $\A$ on $\B = (\U^1)^{\ab}$ described in~\eqref{e:T}, in the sense that the natural map of sets $\U^1/(\mathrm{conjugacy}) \lrar \B$ is $\A$-equivariant.

Let $e$ be the outer exponent of $\U^1$ and let $k \subseteq L \subseteq \ovl{k}$ be the minimal Galois subextension such that $\alp|_{\Gam_L} = 0$ and $L$ contains all $e$-th roots of unity. In other words, $L$ is the fixed field of the subgroup 
\[\Gam_L := \Ker\Big[\Gam_k \xrightarrow{(\alp,\chi)} A \times (\ZZ/e)^*\Big]\rlap,\]
where $\chi$ is the cyclotomic character encoding the Galois action on the $e$-th roots of unity.  
We denote by $G = \Gal(L/k)$ the (finite) Galois group of $L$ over $k$, so that
$\alp$ and~$\chi$ together determine an embedding 
\begin{align}
\label{eq:embeddingalpchi}
 (\alp,\chi): G \hookrightarrow \A \times (\ZZ/e)^* \rlap,
\end{align}
whose components we still call $\alp$ and $\chi$.
We also note that the actions of $\Gam_k$ on both~$\B$ and~$\what{\B}$ factor through $G$, and so we may consider $\B$ and $\what{\B}$ as $G$-modules. 

As $\alp|_{\Gam_L} = 0$, we have
$T_\alp(L)\neq\emptyset$, where $T_\alp$ is as in~\eqref{eq:defv};
hence $V(L)\neq\emptyset$. Proposition~\ref{p:formula} is therefore applicable in the present setting with this choice of~$L$ and~$G$,
noting that $H_{\bar v}^\ab = \B$. Using it, we identify $\Br_{1,\nr}(V)/\Br_0(V)$ with a subgroup of $H^1(G,\what{\B})$.

\begin{prop}
\label{p:neukirch}
If $k$ does not contain a primitive $p$-th root of unity then $H^1(G,\what{\B}) = 0$. In particular, in this case one has $\Br_{1,\nr}(V)/\Br_0(V) = 0$.
\end{prop}

\begin{proof}
If $k$ does not contain a primitive $p$-th root of unity then $p$ must be odd, and we note that $e$ is a positive power of $p$, since it divides the exponent of $\U^1$, 
which is a power of~$p$, but is also divisible by the exponent of $\B= (\U^1)^{\ab}$, which is $p$. The condition that $k$ does not contain a primitive $p$-th root of unity implies that 
the composed map 
\[\chi_p\colon G \x{\chi}{\lrar} (\ZZ/e)^* \lrar (\ZZ/p)^*\] 
is non-zero. 
Its kernel is a $p$\nobreakdash-group since it is contained in $A \times \Ker((\ZZ/e)^*\twoheadrightarrow (\ZZ/p)^*)$
(see~\eqref{eq:embeddingalpchi}),
which is a $p$-group,
while its image is of order prime to $p$, being contained in $(\ZZ/p)^*$. 
On the other hand, by~\eqref{eq:embeddingalpchi},
the group $G$ is abelian.
Hence~$G$ splits as the product $G = \Ker(\chi_p) \times \im(\chi_p)$. 
We deduce that $G$ contains $0 \neq \im(\chi_p) \subseteq (\ZZ/p)^*$ as a normal  
subgroup which acts on the $\FF_p$-vector space $\what{\B}$ by $a(v) = av$. Then $H^i(\im(\chi_p),\what{\B}) = 0$ for all $i \geq 0$ and so the desired result follows from the Hochschild--Serre spectral sequence.
\end{proof}

\begin{define}
Let $G$ be a group and $M$ a $G$-module. We set
\[ \Sha^1_{\cyc}(G,M) = \Ker\Bigg[H^1(G,M) \lrar \prod_{C\subseteq G} H^1(C,M)\Bigg]\rlap{,}\]
where the product is taken over all cyclic subgroups $C\subseteq G$.
\end{define}

\begin{prop}\label{p:sha-cyc}
If the differential~$\delta$ appearing in~\eqref{se:hs} vanishes (e.g., if~$k$ is a number field),
then there is a sequence of inclusions
\begin{align}
\Sha^1_{\cyc}(G,\what{\B}) \subseteq \Br_{1,\nr}(V)/\Br_0(V) \subseteq H^1(G,\what{\B})
\end{align}
of subgroups of $H^1(k,\what{\B})$.
\end{prop}
\begin{proof}
This follows immediately from the formula in Proposition~\ref{p:formula}.
\end{proof}

\begin{rem}\label{r:sha-cyc-is-ba}
It follows from Chebotarev's theorem that when~$k$ is a number field, the image of $\Sha^1_{\cyc}(G,\what{\B})$ in $H^1(k,\what{B})$ consists of those elements whose image in $H^1(k_v,\what{B})$ vanishes for almost all places~$v$. Under the isomorphism $H^1(k,\what{B}) \cong \Br_1(V)/\Br_0(V)$ coming from the Hochschild-Serre spectral sequence in this case, this image gets identified with the subgroup
$\Ba_{\omega}(V) \subseteq \Br_{1}(V)/\Br_0(V)$ consisting of the classes of those Brauer elements whose image in $\Br(V_{k_v})$ belongs to $\Br_0(V_{k_v})$ for almost all places $v$. Such Brauer elements are also known as \emph{locally constant almost everywhere}, and are always unramified. In particular, we may consider $\Ba_{\omega}(V)$ as a subgroup of $\Br_{1,\nr}(V)/\Br_0(V)$.
\end{rem}

Going back to an arbitrary field~$k$ of characteristic~$0$, our goal,
until the end of this section,
is to understand the subgroup $\Br_{1,\nr}(V)/\Br_0(V)$
of $H^1(G,\what{\B})$
in the case where $n \leq 6$. For this, it will be convenient to introduce some additional notation.
Given a matrix~$M$, we denote its coefficient in position $(i,j)$ by $M_{i,j}$.
For $r \geq 0$, we denote by $\N^r$ the set of those $(n+1)\times (n+1)$\nobreakdash-matrices
with coefficients in~$\FF_p$ all of whose $(i,j)$ entries with $i>j-r$ are zero. Let~$I$
denote the identity matrix.
It is straightforward that
\begin{itemize}
\item
$\N^r \cdot \N^s \subseteq \N^{r+s}$ for $r,s \geq 0$;
\item
$\N^r = 0$ for $r > n$;
\item
for $M \in \U$ and $r \geq 1$, one has $M \in \U^r$ if and only if $M-I \in \N^{r+1}$. 
\end{itemize}

\begin{lem}\label{l:exponent-0}
The exponent (and hence the outer exponent) of $\U^1$ divides $p^m$
for any $m$ such that $2p^m > n$.
\end{lem}
\begin{proof}
Consider $M \in \U^1$, so that $M-I \in \N^2$. 
Inside the $\FF_p$-algebra of all $(n+1) \times (n+1)$ matrices, the powers of $M$ span a commutative $\FF_p$-subalgebra, within which we compute 
\begin{align}
\label{eq:newton}
M^{p^m}= (I + (M-I))^{p^m} = I + (M-I)^{p^m} \rlap{.}
\end{align}
As $(M-I)^{p^m} \in \N^{2p^m}$,
one has $(M-I)^{p^m}=0$ when $2p^m > n$. The lemma follows.
\end{proof}

\begin{lem}\label{l:exponent}
If $2 \leq n \leq 6$, the outer exponent of $\U^1$ is equal to $p$.
\end{lem}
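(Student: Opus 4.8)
The plan is to compute the outer exponent of $\U^1$ directly, by analysing the conjugacy classes of $\U^1$ and how the maps $[x] \mapsto [x^i]$ act on them. Since the exponent $d$ of $\U^1$ is a power of $p$, and since the outer exponent $e$ is by definition the smallest divisor of $d$ through which the $(\ZZ/d)^*$-action on conjugacy classes factors, to show $e = p$ it suffices to show two things: first, that the action of $(\ZZ/d)^*$ on $\U^1/(\mathrm{conjugacy})$ does factor through $(\ZZ/p)^*$, and second, that it does not factor through the trivial group, i.e., that there is some conjugacy class $[x]$ with $[x^i] \neq [x]$ for some $i$ reducing nontrivially mod~$p$. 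The second point is easy: take $x = e_{0,2} \in \U^1$ (which makes sense since $n \geq 2$, and for $n \leq 6$ we have $n \geq 2$ in the relevant range; for $n=2$ the group $\U^1$ is central of order $p$ anyway and the statement is immediate, so assume $n \geq 3$). Then $x$ is a nontrivial element of $\B = (\U^1)^{\ab}$, the natural map $\U^1/(\mathrm{conjugacy}) \to \B$ is equivariant, and $x^i \neq x$ in $\B$ for $i \not\equiv 1 \pmod p$, so $[x^i] \neq [x]$ already in $\U^1/(\mathrm{conjugacy})$; hence $e$ is not $1$, so $e$ is a positive power of $p$.

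The substance of the proof is therefore the first point: showing that $[x^p]$ and $[x^{p+1}] = [x \cdot x^p]$ determine $[x^i]$ for all $i$, equivalently that $[x^{1+p}]$, $[x^{1+2p}]$, \dots\ are all equal to a single class depending only on $[x^{1+p}]$, which will follow once one shows $[x^{1+p}] = [x]$ for all $x$ — no, more precisely one wants: the map $[x] \mapsto [x^i]$ depends only on $i \bmod p$. The cleanest route is to show directly that for every $x \in \U^1$ one has $x^{1+p}$ conjugate to $x$ in $\U^1$ (when $n \leq 6$), because then $[x^i]$ depends only on $i \bmod p$: writing $i = 1 + mp$ (for $i \equiv 1$; similar for other residues by replacing $x$ with a power) and iterating. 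To prove $x^{1+p} \sim x$, first reduce to the case where $x$ has order $p^2$ (if $x$ has order $p$ then $x^{1+p} = x$ on the nose, since $x^p = 1$). For $n \leq 6$, the group $\U^1$ has nilpotency class at most $\lceil 6/2 \rceil = 3$ or so, and its exponent is at most $p^2$ — this should be checked via the explicit matrix description: the largest power of $p$ occurring as an order is governed by the binomial coefficients appearing in $(I + N)^p = I + pN + \binom{p}{2}N^2 + \cdots$, and for $n \leq 6$ with $N$ strictly upper-triangular supported on diagonals $\geq 2$, one has $N^3 = 0$ when $N \in \U^1$... actually $N^2$ can be nonzero (e.g.\ $e_{0,2} + e_{2,4}$ squares to $e_{0,4}$ when... wait, $e_{0,2}e_{2,4} = e_{0,4}$, yes), but $N^3 = 0$ for $N$ a sum of $e_{i,j}$ with $j - i \geq 2$ once $n \leq 5$; for $n = 6$ one can have $e_{0,2} + e_{2,4}$ and products of length $2$ land in degree $\geq 4$, length $3$ in degree $\geq 6 > 6$... so $N^3 = 0$ throughout $n \leq 6$. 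Hence $x = I + N$ with $N^3 = 0$, so $x^p = I + pN + \binom{p}{2}N^2$; since $p \mid \binom p2$ for $p$ odd and $x$ has order dividing $p^2$, this gives $x^p = I + p(\text{stuff})$, and then $x^{p^2} = I$, so $\exp(\U^1) \mid p^2$, i.e.\ $d \in \{p, p^2\}$.

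Granting $d \mid p^2$, the task becomes: for $x$ of order $p^2$, show $x^{1+p} \sim x$. Set $y = x^p$, a central-ish element (it lies in $\U^1$ but more importantly in a later term of the lower central series — indeed $y = x^p \in I + pN\cdot(\text{units}) $ lands in $\U^{2}$ or deeper). The key computation is that $x^{1+p} = x y$ and one wants an element $g \in \U^1$ with $g^{-1} x g = x y$, i.e.\ $[x,g] := x^{-1}g^{-1}xg = y = x^p$. Since $x^p \in \U^{m}$ for some $m \geq 2$ with $N^2$-type support, and the commutator map $\U^1 \times \U^1 \to \U^{2}$ (and further filtration pieces) is computed by the explicit formula analogous to~\eqref{e:T}, one checks that $x^p$ — being, modulo higher terms, $\binom p2 N^2$ up to the $p$-divisible correction, hence a specific combination of $e_{i,l}$ with $l - i \geq 4$ — lies in the image of $g \mapsto [x,g]$. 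Here the bound $n \leq 6$ enters decisively: the target lives in diagonals $\geq 4$, the group has few enough diagonals that $x^p$ is forced into a very restricted subspace on which conjugation by suitable $g \in \U^1$ acts transitively enough to hit it. The hard part will be this last step — verifying that $x^p$ is always a commutator $[x,g]$ with $g \in \U^1$ — and it will require a careful case analysis according to the support of $x$ (which diagonals of $N$ are nonzero) and a genuine use of $n \leq 6$ to keep $N^3 = 0$ and to confine $x^p$ to the deep filtration pieces where the commutator pairing is surjective onto the relevant subspace. One should also separately note the case $p = 2$: there $\binom p2 = 1$ is not divisible by $p$, but $N^3 = 0$ still gives $x^4 = I + \binom 4 2 N^2 \cdot(\ldots)$... one must redo the order bound, finding $\exp(\U^1) \mid 4 = p^2$ again (possibly needing $N^2 = 0$ in some range, or the extra diagonal analysis), and then $x^{1+p} = x^3 = x \cdot x^2$ with $x^2 \in \U^2$, and argue as before that $x^2 \sim_{\!\text{conj}}$-correction makes $x^3 \sim x$; the $p=2$ sub-case is likely the fiddliest and I would treat it explicitly at the end.
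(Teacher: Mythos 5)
Your skeleton is the same as the paper's: reduce the lemma to showing that $M^{1+mp}$ is conjugate to $M$ in $\U^1$ for every $M\in\U^1$, dispose of most primes using the vanishing of binomial coefficients mod~$p$, and treat the few remaining small primes by hand. However, the proposal contains a concrete error and leaves the real content of the lemma unproved. The error: the claim that $(M-I)^3=0$ for all $M\in\U^1$ when $n\le 6$ is false at $n=6$. Since $M-I$ is supported on the diagonals $j-i\ge 2$, its cube is supported on the diagonals $j-i\ge 6$, and the $(0,6)$ entry exists when $n=6$: for the matrix $M\in\U^1$ with $M_{0,2}=M_{2,4}=M_{4,6}=1$ and all other non-diagonal entries $0$, the matrix $(M-I)^3$ has a nonzero $(0,6)$ entry. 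Note also that if $(M-I)^3$ really were always zero, then for odd $p$ one would have $M^p=I+p(M-I)+\binom p2(M-I)^2=I$ identically (both coefficients vanish mod~$p$), so your ``hard part'' would be empty for all odd $p$ --- a sign the case analysis is miscalibrated. The only genuinely nontrivial odd-$p$ case is exactly $p=3$, $n=6$, where $M^{1+3m}=M+c\,(M-I)^3$ with $(M-I)^3$ concentrated in the central position $(0,6)$; your false vanishing claim erases precisely this case, and your description of $x^p$ as ``essentially $\binom p2 N^2$, supported on diagonals $\ge 4$'' is wrong there.

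More importantly, the heart of the lemma --- that when $x^p\neq 1$ the element $x^p$ lies in $\{[x,g]:g\in\U^1\}$ (equivalently $x^{1+p}$ is conjugate to $x$), and, for $p=2$, that $x$ is conjugate to $x^{-1}$ --- is exactly what you defer as ``the hard part'' requiring ``a careful case analysis''; the $p=2$ case is likewise only sketched. These conjugacy statements are the entire content of the lemma, and without them the proposal is an outline rather than a proof. For comparison, the paper settles them by exhibiting explicit conjugating elements: for $p=3$, $n=6$ one conjugates $M^{1+3m}$ back to $M$ by a suitable power of $e_{2,6}$ (with exponent $cM_{2,4}M_{4,6}$), and for $p=2$ one introduces $M',M''\in\U^1$ agreeing with $M$ on the entries with $j<4$, resp.\ $i>3$, and all other non-diagonal entries zero, checks $M'^2=M''^2=(MM'M'')^2=[M',M'']=I$, and deduces formally that $M^{-1}=(MM'M'')M(MM'M'')^{-1}$. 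Some argument of this kind (or another verification of the two conjugacy claims) must be supplied before your approach yields the lemma.
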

\begin{proof}
As the abelianization
of~$\U^1$ has exponent~$p$ when $n \geq 2$,
the outer exponent of~$\U^1$ is divisible by~$p$. Hence the desired result follows from Lemma~\ref{l:exponent-0} whenever $2p > n$. This holds for every $p > 3$ (as we assume $n \leq 6$), for $p=3$ when $n \leq 5$, and for $p=2$ when $n \leq 3$. We now address the remaining cases.

Suppose that $p=3$ and $n=6$.
By Lemma~\ref{l:exponent-0}, the exponent of $\U^1$ is divisible by $9$. Hence it suffices to show that $M^4$ is conjugate to $M$ for any $M \in \U^1$. As $(M-I)^3 \in \N^6$ and the $(0,6)$ entry
of this matrix is $M_{0,2}M_{2,4}M_{4,6}$, we have
\begin{align}
\label{eq:mmi3}
(M-I)^3 = M_{0,2}M_{2,4}M_{4,6}(e_{0,6}-I)\rlap.
\end{align}
Multiplying~\eqref{eq:newton} for $m=1$ on the left by~$M$ and using
the equality $R(e_{0,6}-I) = e_{0,6}-I$ for $R \in \U$,
we obtain, in view of~\eqref{eq:mmi3},
that
\begin{align}
M^4 = M + M(M-I)^3 = M + M_{0,2}M_{2,4}M_{4,6}(e_{0,6}-I)\rlap.
\end{align}
Now conjugation by~$e_{2,6}$ acts
on any matrix of~$\U^1$ by subtracting its $(0,2)$ entry from its $(0,6)$ entry. We conclude that 
\begin{align*}
QM^4Q^{-1} = M
\end{align*}
with $Q = e_{2,6}^{M_{2,4}M_{4,6}}$, as desired.

Finally, suppose that $p=2$ and $4 \leq n\leq 6$.
By Lemma~\ref{l:exponent-0},
the exponent of~$\U^1$ divides~$4$.  To prove that its outer exponent is~$2$,
it therefore suffices to show that each $M \in \U^1$ is conjugate to its inverse.
To this end,
define $Q \in \U^1$ by setting $Q_{i,j}=M_{i,j}$ if $i>3$ or $j\leq 3$,
and $Q_{i,j}=0$ otherwise, and let $R=MQ^{-1}$. 
We may depict
 $M$, $Q$ and $R$ 
as block matrices
\[ M = \begin{pmatrix}  A & \rvline & B \\ \hline  0 & \rvline & C \end{pmatrix} \quad\quad
Q = \begin{pmatrix}  A & \rvline & 0 \\ \hline  0 & \rvline & C \end{pmatrix} \quad\quad
R =\begin{pmatrix}  I & \rvline & D \\ \hline  0 & \rvline & I \end{pmatrix}\rlap, 
\]
where~$A$ has size $4\times 4$ and $C$ has size $(n-3) \times (n-3)$.
Lemma~\ref{l:exponent-0} applied with $p=2$ and $n\leq 3$ shows that $A^2=I$ and $C^2=I$,
so that $Q^2=I$. On the other hand, as $2D=0$, we also have $R^2 = I$, and so $MQ^{-1} = QM^{-1}$.
We conclude that
\begin{align*}
QM^{-1}Q^{-1}=QM^{-1}Q = M\rlap,
\end{align*}
as desired.
\end{proof}

We do not know whether the conclusion of Lemma~\ref{l:exponent} remains true for all $n$ and $p$.

\begin{rem}
\label{r:beforebzero}
Lemma~\ref{l:exponent} implies that when $n \leq 6$ the map $\alp\colon G \lrar \A$ 
is injective as soon as $k$ contains a primitive $p$-th root of unity (e.g., when $p=2$).
\end{rem}

When $n\geq 3$, we will denote by $\B_0 \subseteq \B$ the subgroup generated by $\ovl{e}_{0,2},\ovl{e}_{0,3},\ovl{e}_{n-3,n},\ovl{e}_{n-2,n}$. We note that $\B_0$ is closed under the action of $\A$ and that $\B_0$ is an elementary abelian $p$\nobreakdash-group of rank $4$, except when $n=3$ in which case $\ovl{e}_{0,3}$ and $\ovl{e}_{n-3,n}$ coincide and $\B_0 = \B$ has rank~$3$. 
The following result on the image of $\Br_{1,\nr}(V)$ in $H^1(G,\what{\B})$ will play a key role in the proof of the Massey vanishing conjecture when $n \leq 6$:

\begin{prop}\label{p:bound_on_brnr1}
If $3 \leq n \leq 6$, the subgroup $\Br_{1,\nr}(V)/\Br_0(V) \subseteq H^1(G,\what{\B})$ is contained in the kernel of the natural map
$H^1(G,\what{\B}) \lrar H^1(G,\what{\B}_0)$.
\end{prop}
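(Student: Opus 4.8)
The plan is to use the explicit formula of Proposition~\ref{p:formula} together with the description~\eqref{e:T} of the $\A$-action on $\B$. By Lemma~\ref{l:exponent}, when $n \leq 6$ the outer exponent of $\U^1$ equals $p$, so $\chi$ takes values in $(\ZZ/p)^*$; I would first dispose of the case where $k$ does not contain a primitive $p$-th root of unity by invoking Corollary~\ref{c:neukirch}, which forces $\Br_{1,\nr}(V)/\Br_0(V) = 0$ and makes the statement trivial. So assume $k$ contains a primitive $p$-th root of unity. By Remark~\ref{r:beforebzero}, the map $G \lrar \A$ is then injective, so I may identify $G$ with its image, a subgroup of $\A$; the cyclotomic character $\chi$ on this $G$ is trivial when $p = 2$ (since then $(\ZZ/2)^* = 1$) and in general records whether $p$-th roots of unity are already in $k$—but since we have assumed they are, $\chi$ is in fact trivial as a character of $\Gamma_k$, hence $\chi(\sigma) = 1$ for all $\sigma \in G$.

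Now take $\beta \in \Br_{1,\nr}(V)/\Br_0(V) \subseteq H^1(G,\hat{\B})$. By Proposition~\ref{p:formula} (with $\chi$ trivial), $\beta$ is characterized by the property that $\ovl{u}_*\sig^*\beta = 1$ whenever $\sig \in G$ and $u \in \U^1$ satisfy $\sig[u] = [u]$ in $\U^1/(\mathrm{conjugacy})$, i.e. whenever $u$ is conjugate to $\sig(u)$ in $\U^1$. The goal is to show that $\beta$ maps to $0$ under $H^1(G,\hat{\B}) \lrar H^1(G,\hat{\B}_0)$. Dualizing, $\hat{\B}_0$ is the quotient of $\hat{\B}$ dual to the inclusion $\B_0 \inj \B$, so the map $H^1(G,\hat{\B}) \lrar H^1(G,\hat{\B}_0)$ is induced by restricting cocycles along $\B_0$: a class $\beta$ dies in $H^1(G,\hat{\B}_0)$ precisely when, for a representing cocycle $b: G \lrar \hat{\B}$, each $b_\sigma$ vanishes on $\B_0$ up to a coboundary—equivalently, when $b_\sigma|_{\B_0} = (\sigma - 1)\lambda$ for some $\lambda \in \hat{\B}_0$, but the cleanest route is to show $\ovl{w}_* \sigma^* \beta = 1$ for each basis vector $\ovl{w}$ of $\B_0$ and each $\sigma \in G$, which is exactly the triviality of the composite $H^1(G,\hat{\B}) \to H^1(G, \hat{\B}_0) \to \prod H^1(\langle\sigma\rangle, \hat{\B}_0)$ evaluated at the dual basis—and since $\hat{\B}_0$ is a sum of copies of $\mmu_p$ indexed by $\ovl{e}_{0,2}, \ovl{e}_{0,3}, \ovl{e}_{n-3,n}, \ovl{e}_{n-2,n}$, this product map is injective on $H^1(G,\hat{\B}_0)$ when $G$ is (as here) abelian of exponent dividing something coprime to... no: one must be a little careful, so instead I would argue directly that $b_\sigma$ restricted to $\B_0$ is a coboundary by exhibiting, for each $\sigma$ and each of the four distinguished basis elements $\ovl{w}$, an element $u = u(\sigma, \ovl{w}) \in \U^1$ with image $\ovl{w}$ in $\B$ and with $u$ conjugate to $\sigma(u)$ in $\U^1$, so that $\ovl{w}_*\sigma^*\beta = \ovl{u}_*\sigma^*\beta = 1$ by the characterization above.

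The heart of the proof—and the main obstacle—is therefore the combinatorial claim: for every $\sigma \in G \subseteq \A$ and every $\ovl{w} \in \{\ovl{e}_{0,2}, \ovl{e}_{0,3}, \ovl{e}_{n-3,n}, \ovl{e}_{n-2,n}\}$, the elementary matrix $e_{0,2}$, $e_{0,3}$, $e_{n-3,n}$ or $e_{n-2,n}$ respectively is conjugate in $\U^1$ to its $\sigma$-translate. Writing $\sigma$ as a tuple $(t_0, \dots, t_{n-1}) \in \FF_p^n$ via the basis $\ovl{e}_{i,i+1}$ of $\A$, formula~\eqref{e:T} shows that $\sigma$ acts on $\B$ by sending, e.g., $\ovl{e}_{0,2} \mapsto \ovl{e}_{0,2} + t_2 \ovl{e}_{0,3}$ (only the "$j = k$" case with $j = 2$ can contribute, since $i = 0$ forces $i = l$ impossible for $l \geq 2$, and for $n=3$ a small separate check), so on the full group $\U^1$, $\sigma(e_{0,2})$ differs from $e_{0,2}$ by a controlled unipotent element lying "further up" the lower central series, which one conjugates away using commutators with suitable $e_{0,3}$, $e_{2,j}$ etc.; similarly at the other corner. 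The key structural reason this works is that $\ovl{e}_{0,2}, \ovl{e}_{0,3}$ sit at the "top-left corner" and $\ovl{e}_{n-3,n}, \ovl{e}_{n-2,n}$ at the "bottom-right corner" of $\B$, positions where the $\A$-action from~\eqref{e:T} can only add vectors that are themselves in $\B_0$ or can be absorbed by conjugation inside $\U^1$; I would organize the verification as a short case analysis on which of the four elements and on the relevant entries $t_i$ of $\sigma$, handling $n = 3,4,5,6$ uniformly where possible and noting the degenerations at $n=3$. Once this conjugacy claim is established, the conclusion is immediate from Proposition~\ref{p:formula}.
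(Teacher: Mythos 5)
There is a genuine gap, and it sits exactly where you place "the heart of the proof". Your combinatorial claim---that for every $\sig \in G \subseteq \A$ each of $e_{0,2}, e_{0,3}, e_{n-3,n}, e_{n-2,n}$ is conjugate in $\U^1$ to its $\sig$-translate---is false. If $\sig = \sum_i a_i \ovl{e}_{i,i+1}$ has $a_2 \neq 0$, then by~\eqref{e:T} one has $\sig(\ovl{e}_{0,2}) = \ovl{e}_{0,2} - a_2\ovl{e}_{0,3} \neq \ovl{e}_{0,2}$ in $\B$; since inner automorphisms of $\U^1$ act trivially on the abelianization $\B$ and the map $\U^1/(\mathrm{conjugacy}) \lrar \B$ is $\A$-equivariant, \emph{no} lift of $\ovl{e}_{0,2}$ can have a $\sig$-invariant conjugacy class, so Proposition~\ref{p:formula} gives you no relation at all involving $\ovl{e}_{0,2}$ for such $\sig$. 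Even for $\ovl{e}_{0,3}$, which is $\A$-invariant in $\B$, the class $[e_{0,3}]$ fails to be $\sig$-invariant as soon as $a_3 \neq 0$ and $n\geq 4$: the translate acquires an $e_{0,4}$-component, and conjugation by $\U^1$ can only modify the entries $(0,j)$ with $j \geq 5$, so it cannot be absorbed. This is precisely why the paper's Lemma~\ref{l:group-theory} asserts only that the image of $(\U^1/(\mathrm{conjugacy}))^\sig \lrar \B^\sig$ generates $\B_0 \cap \B^\sig$, and why its proof must use the non-obvious invariant lifts built from $e_{0,3}e_{0,2}$ and from $a_2\ovl{e}_{0,3}+b$ with $b=\sum_i a_ia_{i+1}\ovl{e}_{i,i+2}$, rather than the elementary matrices themselves.

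Second, even after the lifting statement is corrected, the information extracted from Proposition~\ref{p:formula} is cyclic-subgroup-by-cyclic-subgroup: pairing $\sig^*\ovl{\beta}$ trivially against $H^0(\hat{\ZZ},\B_0)=\B_0^\sig$ and invoking the perfectness of the pairing $H^0(\hat{\ZZ},\B_0)\times H^1(\hat{\ZZ},\hat{\B}_0)\lrar \mmu_p$ only yields $\sig^*\ovl{\beta}=0$ for every $\sig$, i.e.\ $\ovl{\beta} \in \Sha^1_{\cyc}(G,\hat{\B}_0)$, not $\ovl{\beta}=0$ in $H^1(G,\hat{\B}_0)$. You flag this yourself ("one must be a little careful") but then bypass it via the false direct claim. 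The missing ingredient is the paper's Lemma~\ref{l:sha}, the vanishing $\Sha^1_{\cyc}(G,\hat{\B}_0)=0$, which requires a genuine argument (for $n>3$, $\B_0$ splits as a direct sum of two modules on each of which the relevant subgroup acts through a cyclic quotient; for $n=3$ a separate linear-algebra argument with the extension~\eqref{eq:dualb}). This vanishing is not formal: for the full module $\B$ the analogous group $\Sha^1_{\cyc}(G,\hat{\B})$ can be nonzero (Example~\ref{e:n4}), so the special structure of $\B_0$ is essential. The opening reductions of your proposal (Corollary~\ref{c:neukirch} when $\mmu_p \not\subset k$, triviality of $\chi$ via Lemma~\ref{l:exponent}) do match the paper, but the two points above are where the actual proof lives.
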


The remainder of this section is devoted to the proof of Proposition~\ref{p:bound_on_brnr1}.
We henceforth assume that $n\geq 3$.

\begin{rem}\label{r:aid-memoire}
Let $\sig = \sum_{i=0}^{n-1} a_i\ovl{e}_{i,i+1} \in \A$. The action of $\sig$ on $\U^1/(\mathrm{conjugacy})$ can be computed by lifting $\sig$ to the unitriangular matrix $S$ such that $S_{i,i+1}=a_i$ and all other non-diagonal values vanish:
on the level of conjugacy representatives $Q \in \U^1$, the action of~$\sig$ is determined by $\sig([Q]) = [M]$ with $M=SQS^{-1}$.
 We have $(S^{-1})_{i,j} = (-1)^{j-i}\prod_{l=i}^{j-1}a_l$ when~$i\leq j$.  Performing the calculation and using the fact that $Q_{i,i}=1$ and $Q_{i,i+1}=0$, we find that $M$ is given by
\begin{align}
\begin{aligned}
\label{e:aide}
 M_{i,j} &= \left[\sum_{m=i}^j (-1)^{j-m}Q_{i,m}\prod_{l=m}^{j-1} a_l\right] + a_{i}\left[\sum_{m=i+1}^j(-1)^{j-m}Q_{i+1,m}\prod_{l=m}^{j-1} a_l\right]  \\
&= Q_{i,j} + \sum_{m=i}^{j-1} (-1)^{j-m}\left[\big(Q_{i,m}a_{m} - a_{i}Q_{i+1,m+1} \big)\prod_{l=m+1}^{j-1}a_l\right] \\
&= Q_{i,j} + \sum_{m=i+2}^{j-1} (-1)^{j-m}\left[\big(Q_{i,m}a_{m} - a_{i}Q_{i+1,m+1} \big)\prod_{l=m+1}^{j-1}a_l\right]\in \FF_p
\rlap{.}
\end{aligned}
\end{align}
Here, by convention, we interpret an empty sum as $0$ and an empty product as $1$. In particular, 
we have  $M_{i,j} = Q_{i,j}$ when $j-i \leq 2$ and $M_{i,i+3} = Q_{i,i+3}-Q_{i,i+2}a_{i+2} + a_{i}Q_{i+1,i+3}$.
\end{rem}

\begin{rem}\label{r:symmetry}
Let $\tau$ denote the involution of $\GL_{n+1}(\FF_p)$ defined by $\tau(M)=(PM^{-1}P^{-1})^t$,
where $P=P^{-1}$ is the matrix of the permutation $i \mapsto n-i$.
We note that $\tau(e_{i,j})=e_{n-j,n-i}^{-1}$
for all~$i$ and~$j$, so that $\tau(\U^i)=\U^i$ for all~$i$.  We therefore obtain induced involutions,
again denoted~$\tau$,
on~$\A$ and~$\B$. The action of~$\A$
on $\U^1/(\mathrm{conjugacy})$
respects~$\tau$ in the sense that
$\tau(\sigma)(\tau([Q]))=\tau(\sigma([Q]))$ 
for any $\sigma \in \A$ and any $Q \in \U^1$, and similarly for the action of $\A$ on $\B$. The observant reader might be troubled by the fact that the formula in~\eqref{e:aide} is not visibly symmetric with respect to $\tau$: if we replace $\sig$ by $\tau(\sig)$ and $Q$ by $\tau(Q)$ we will not get the matrix $\tau(M)$ on the nose. The new $M$ will however be conjugate to $\tau(M)$. This is simply a side effect of our (essentially arbitrary) choice of $S$ as a lift of $\sigma$, which was not done in a $\tau$-symmetric manner.
\end{rem}

Recall that for a set $I$ equipped with an action of $G$ and an element $\sig \in G$, we write $I^{\sig}$ for the subset of $\sig$-invariant elements of $I$.

\begin{lem}\label{l:b2}
Let $\sig \in \A$ be an element and let $b \in \B^{\sig}$ be a $\sig$-invariant element which is contained in the subgroup of $\B$ spanned by $\{\ovl{e}_{i,i+2}\}_{i=0}^{n-2}$. 
Then $b$ lifts to a $\sig$-invariant element of $\U^1/(\mathrm{conjugacy})$.
\end{lem}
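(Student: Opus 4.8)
The plan is to lift $b$ "by hand" to a unitriangular matrix $Q\in\U^1$ whose conjugacy class is $\sig$-invariant, using the explicit formula~\eqref{e:aide} from Remark~\ref{r:aid-memoire}. Write $\sig=\sum_{i=0}^{n-1}a_i\ovl e_{i,i+1}$ and $b=\sum_{i=0}^{n-2}c_i\ovl e_{i,i+2}$ with $c_i\in\FF_p$. The obvious candidate lift is the matrix $Q$ with $Q_{i,i+2}=c_i$ and all other non-principal entries zero. With this choice, the formula~\eqref{e:aide} simplifies considerably because $Q_{i,j}=0$ unless $j-i\in\{0,2\}$; one computes $M_{i,j}$ for the conjugate $M=SQS^{-1}$ and reads off that $M_{i,i+2}=Q_{i,i+2}=c_i$, while the higher entries $M_{i,j}$ for $j-i\ge 3$ are explicit polynomials in the $a_l$'s and $c_i$'s. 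The condition $b\in\B^\sig$ is, by~\eqref{e:T}, precisely the statement that these leading ($j-i=2$) entries are unchanged, which is automatic here; the content of the lemma is that the \emph{whole} conjugacy class $[Q]$ is fixed, i.e.\ $[M]=[Q]$ in $\U^1/(\mathrm{conjugacy})$.

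First I would make the reduction explicit: since $\U^1/\U^{m}$ is successively built up, it suffices to correct $M$ back to $Q$ one diagonal at a time by conjugating by suitable elementary matrices $e_{i,j}$ with $j-i$ large. Concretely, I expect to argue by downward induction on the diagonal index: having arranged that $M$ and $Q$ agree on all diagonals $j-i\le d$ for some $d\ge 2$ (the base case $d=2$ being the computation above), the discrepancies on the $(d{+}1)$-st diagonal are central in $\U^1/\U^{d+2}$, hence can be killed by multiplying $Q$ on both sides by appropriate powers of the $e_{i,i+d+1}$ — wait, that changes $Q$, not $M$; rather, one conjugates $M$ by elementary matrices $e_{r,s}$ with $s-r=d-1$, which alters only the entries on diagonals $\ge d+1$, matching the required correction on diagonal $d+1$ because of the commutator relation $[e_{r,s},\text{(diagonal-2 part of }Q)]$ contributing exactly an $e_{r,s+2}$-type term. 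Iterating removes all discrepancies, yielding $[M]=[Q]$.

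The main obstacle is bookkeeping: one must check that the elementary conjugations used to fix diagonal $d+1$ do not disturb the already-corrected lower diagonals, and that the correction terms land in the span of the right basis vectors so that the induction closes. Because $Q$ has support only on diagonal $2$, the commutators $[e_{r,s},Q]$ are genuinely simple — each such commutator is a sum of at most two elementary matrices of the form $e_{r,s+2}$ and $e_{r-2,s}$ (by the same pattern as~\eqref{e:T}) — so the linear system "choose conjugating exponents to match the target discrepancy" is triangular and always solvable over $\FF_p$. I would also remark that an alternative, slicker route is available: the subgroup $\U^1/\langle\ovl e_{i,i+2}\text{'s complement}\rangle$ together with the fact that the $\sig$-action on $\B$ is \emph{unipotent} (clear from~\eqref{e:T}) lets one invoke a fixed-point/obstruction argument, the obstruction to lifting a $\sig$-fixed class living in $H^2$ of a cyclic group with coefficients in a successive quotient $\U^{m}/\U^{m+1}$ on which $\sig$ acts compatibly; but the direct matrix computation is more transparent and is what I would write up in detail.
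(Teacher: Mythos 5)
Your choice of lift is the right one---the paper takes the same matrix $Q$ with $Q_{i,i+2}=b_i$ and all other non-diagonal entries zero---but your use of the hypothesis and your correction argument both go wrong. First, the condition $b\in\B^{\sig}$ is \emph{not} the statement that the diagonal-two entries are preserved: those are preserved automatically, since $M_{i,j}=Q_{i,j}$ whenever $j-i\le 2$ for any $Q$ (Remark~\ref{r:aid-memoire}). Because $\B=\U^1/\U^3$ has basis vectors on the second \emph{and third} diagonals, invariance of $b$ under $\sig=\sum_i a_i\ovl{e}_{i,i+1}$ translates, via \eqref{e:T}, into the relations $a_ib_{i+1}-b_ia_{i+2}=0$ for all $i$; these relations are exactly what makes the lemma true, and your argument never uses them.

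Second, the inductive ``remove the discrepancies by conjugation'' step has a genuine gap. Conjugacy here is conjugacy in $\U^1$ (the matrices $e_{r,s}$ with $s-r=1$ that your first inductive step calls for do not lie in $\U^1$), and since $[\U^1,\U^1]=\U^3$, conjugation by $\U^1$ cannot change the image of a matrix in $\B$, i.e.\ it cannot alter the third-diagonal entries at all. So a discrepancy $M_{i,i+3}\neq Q_{i,i+3}$ could never be corrected; it must vanish identically, and it does so precisely because $M_{i,i+3}=-b_ia_{i+2}+a_ib_{i+1}=0$ by the relations above. On higher diagonals, the corrections available from conjugating by an elementary matrix are multiples of the $b_j$'s, so the claim that the resulting linear system is ``triangular and always solvable over $\FF_p$'' is unjustified and fails when some $b_j$ vanish; indeed, if your mechanism worked without the invariance relations, it would show that \emph{every} element of the span of the $\ovl{e}_{i,i+2}$ lifts to a $\sig$-invariant conjugacy class, contradicting the $\A$-equivariance of $\U^1/(\mathrm{conjugacy})\lrar\B$. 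In fact no correction is needed at all: for your $Q$ only the term $m=i+2$ survives in \eqref{e:aide}, giving $M_{i,j}=Q_{i,j}+(-1)^{j-i}\bigl(b_ia_{i+2}-a_ib_{i+1}\bigr)\prod_{l=i+3}^{j-1}a_l=Q_{i,j}$ for all $(i,j)$ once the relations are imposed, so $SQS^{-1}=Q$ on the nose and $[Q]$ is $\sig$-invariant. This short computation is the paper's entire proof.
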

\begin{proof}
Let us write $b = \sum_{i=0}^{n-2} b_{i}\ovl{e}_{i,i+2}$ and $\sig = \sum_{i=0}^{n-1} a_i\ovl{e}_{i,i+1}$. Since $b$ is $\sig$-invariant, we have by~\eqref{e:T} that $-b_{i}a_{i+2} + a_{i}b_{i+1} =0$ for every $i=0,\dots,n-3$. It then follows from~\eqref{e:aide} 
that if we lift $b$ to the unitriangular matrix $Q$ such that $Q_{i,i+2}=b_{i}$ and all other non-diagonal values vanish, then the conjugacy class $[Q] \in \U^1/(\mathrm{conjugacy})$ is $\sig$-invariant.
\end{proof}

\begin{lem}\label{l:group-theory}
For any $\sigma \in \A$, the subgroup of $\B^\sigma$ generated by the image of the natural map
\begin{equation}\label{e:conj}
(\U^1/(\mathrm{conjugacy}))^\sigma \lrar \B^\sigma
\end{equation}
contains $\B_0^{\sig}$. 
\end{lem}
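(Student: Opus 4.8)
The plan is to exhibit, for each of the four generators $\ovl{e}_{0,2},\ovl{e}_{0,3},\ovl{e}_{n-3,n},\ovl{e}_{n-2,n}$ of $\B_0$, an element of $\B_0\cap\B^\sigma$ that lies in the subgroup generated by the image of~\eqref{e:conj}, whenever that generator itself happens to be $\sigma$-invariant (and more generally to control the whole subgroup $\B_0\cap\B^\sigma$). By the $\tau$-symmetry recorded in Remark~\ref{r:symmetry}, it suffices to treat the ``left-hand'' generators $\ovl{e}_{0,2}$ and $\ovl{e}_{0,3}$: the involution $\tau$ sends $\ovl{e}_{0,2}$ to $-\ovl{e}_{n-2,n}$ and $\ovl{e}_{0,3}$ to $-\ovl{e}_{n-3,n}$, commutes with the $\A$-action on $\B$ and on $\U^1/(\mathrm{conjugacy})$, hence carries lifts of one pair to lifts of the other. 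So I would first reduce to analyzing, for a fixed $\sigma=\sum a_i\ovl{e}_{i,i+1}\in\A$, which elements of the plane spanned by $\ovl{e}_{0,2},\ovl{e}_{0,3}$ are $\sigma$-invariant, and then show each such element lifts $\sigma$-invariantly to $\U^1/(\mathrm{conjugacy})$.

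Next I would split according to the formula~\eqref{e:T}. The generator $\ovl{e}_{0,2}$ is of ``type $\ovl{e}_{i,i+2}$'', so the moment any $\F_p$-multiple $b\,\ovl{e}_{0,2}$ is $\sigma$-invariant, Lemma~\ref{l:b2} applies verbatim: one just lifts it to the unitriangular matrix $Q$ with $Q_{0,2}=b$ and all other off-diagonal entries zero, and~\eqref{e:aide} shows $[Q]$ is $\sigma$-invariant. For $\ovl{e}_{0,3}$ the situation is slightly more delicate because the $\A$-action on $\ovl{e}_{0,3}$ can produce terms $\ovl{e}_{0,l}$ or $\ovl{e}_{k,3}$ which are \emph{not} of length~$2$, so Lemma~\ref{l:b2} does not directly cover it. Here I would instead work out explicitly, from~\eqref{e:T}, the condition for a combination $b\,\ovl{e}_{0,3}$ (possibly together with a correction in the $\ovl{e}_{i,i+2}$-directions forced by $\sigma$-invariance) to lie in $\B^\sigma$: using that $\ovl{e}_{0,3}$ receives contributions only from $\ovl{e}_{1,2}$ acting on the right (giving $-\ovl{e}_{0,2}\cdot a_1$... ) and from $\ovl{e}_{0,1}$ acting (giving $\ovl{e}_{1,3}$... ), one finds a small linear system in the $a_i$'s. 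Then, as in the proof of Lemma~\ref{l:b2}, I would lift to the unitriangular matrix $Q$ supported on the entries $Q_{0,3}$ (and possibly $Q_{0,2},Q_{1,3}$) dictated by the invariance relations, and verify via the explicit matrix identity $[Q]\mapsto[SQS^{-1}]$ of Remark~\ref{r:aid-memoire} that the conjugacy class is $\sigma$-fixed; the key point is that the formula~\eqref{e:aide} for $M_{i,j}$ with $j-i\leq 3$ is short and the relevant correction terms cancel precisely because of the invariance relations. Combining the two cases and invoking $\tau$ yields that every element of $\B_0\cap\B^\sigma$ is hit.

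The main obstacle I anticipate is the $\ovl{e}_{0,3}$ (equivalently $\ovl{e}_{n-3,n}$) case: unlike $\ovl{e}_{0,2}$, this basis vector is \emph{not} of the distinguished ``sub-diagonal-of-length-2'' type handled by Lemma~\ref{l:b2}, and when $n$ is larger the $\A$-orbit of $\ovl{e}_{0,3}$ spreads into $\ovl{e}_{0,l}$ for all $l$, so pinning down exactly which multiples of $\ovl{e}_{0,3}$ survive in $\B^\sigma$, and then finding a conjugacy-class lift whose $\sigma$-image genuinely returns to the same class (not merely the same image in $\B$), requires a careful bookkeeping of the $\prod a_l$ factors in~\eqref{e:aide}. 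I would expect to need to choose the lift $Q$ to have nonzero entries not just in position $(0,3)$ but also, if necessary, in positions $(0,2)$ and $(1,3)$, precisely absorbing the discrepancy; the verification that such a $Q$ exists and is $\sigma$-invariant up to conjugacy is the technical heart of the lemma. Everything else — the reduction via $\tau$, the appeal to Lemma~\ref{l:b2} for the length-2 generators, and the assembly of the four generators into $\B_0\cap\B^\sigma$ — is routine.
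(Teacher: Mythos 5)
Your overall frame---reducing to $\ovl{e}_{0,2}$ and $\ovl{e}_{0,3}$ via the $\tau$-symmetry of Remark~\ref{r:symmetry}, handling the $\ovl{e}_{i,i+2}$-directions by Lemma~\ref{l:b2}, and lifting a corrected version of $\ovl{e}_{0,3}$ to be checked through~\eqref{e:aide}---is the same as the paper's. But the step you defer as ``careful bookkeeping'' is exactly where your plan breaks: a lift $Q$ supported only in positions $(0,3)$, $(0,2)$, $(1,3)$ cannot have $\sigma$-invariant conjugacy class when $a_2\neq 0$ and $a_3\neq 0$. Indeed, invariance of the image in $\B$ forces $a_2Q_{0,2}=a_0Q_{1,3}$ by~\eqref{e:T}, and then~\eqref{e:aide} gives for the representative $M=SQS^{-1}$ of $\sigma([Q])$ the tail entries $M_{1,4}=-a_3Q_{1,3}$ and $M_{0,4}=-a_3Q_{0,3}$. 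On the other hand, writing $Q=I+N$ with $N$ supported in rows $0,1$, any $x\in\U^1$ fixes these rows, so $xN=N$ and $xQx^{-1}=I+Nx^{-1}$; hence every $\U^1$-conjugate of $Q$ has $(1,4)$-entry $Q_{1,3}z_{3,4}=0$ and $(0,4)$-entry $Q_{0,2}z_{2,4}$ for some $z\in\U^1$. So either $Q_{1,3}\neq 0$ and $M$ is not conjugate to $Q$ because of the $(1,4)$-entry, or $Q_{1,3}=0$, which by the constraint forces $Q_{0,2}=0$ and then $Q_{0,3}=0$ because of the $(0,4)$-entry; in all cases the $\ovl{e}_{0,3}$-component is killed. (A side remark: the question of ``which multiples of $\ovl{e}_{0,3}$ survive in $\B^\sigma$'' is empty---$\ovl{e}_{0,3}$ is always $\A$-invariant in $\B=\U^1/\U^3$, since the would-be new terms have length $4$; the entire difficulty sits in the tail entries $(i,j)$ with $j-i\geq 4$ of a conjugacy-class lift, which your local correction cannot control.)

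The paper's proof gets around this by a case distinction on $a_2,a_3$ and, in the crucial case $a_2\neq 0$, by spreading the correction along the \emph{whole} second superdiagonal: one lifts $a_2\ovl{e}_{0,3}+b$ with $b=\sum_i a_ia_{i+1}\ovl{e}_{i,i+2}$, and the nonzero tail $M_{0,j}$, $j\geq 4$, is then removed by explicit conjugation by $\prod_{j}e_{0,j-2}^{\eps_j}$, which works precisely because the entries $Q_{j-2,j}=a_{j-2}a_{j-1}$ are available arbitrarily far from the corner; $\ovl{e}_{0,3}$ is recovered as a linear combination of this class and of $b$ (the latter via Lemma~\ref{l:b2}), not as a direct invariant lift. (When $a_3\neq 0$, $a_2=0$ a local correction $Q=e_{0,3}e_{0,2}$ does suffice, with the tail absorbed by $\prod_j e_{2,j}^{-M_{0,j}}$, and when $a_3=0$ one takes $[e_{0,3}]$ itself; the case $n=3$ is trivial since $\U^1=\B$.) Unless you enlarge the support of your correction in this way, or supply some other mechanism for killing the tail by conjugation inside $\U^1$, the argument as proposed does not go through.
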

\begin{proof}
If $n=3$, then $\U^1=\B$ and the statement is trivial. Let us assume that $n \geq 4$, in which case $\B_0$ is generated by four distinct elements $\ovl{e}_{0,2},\ovl{e}_{0,3},\ovl{e}_{n-3,n},\ovl{e}_{n-2,n}$. We first note that since every $\ovl{e}_{i,i+3} \in \B$ is $\A$-invariant, Lemma~\ref{l:b2} implies that any $\sig$-invariant element in $\B_0$ can be written as a sum of an element spanned by $\ovl{e}_{0,3},\ovl{e}_{n-3,n}$ and an element in the image of~\eqref{e:conj}. It will hence suffice to show that $\ovl{e}_{0,3}$ and $\ovl{e}_{n-3,n}$ can be written as sums of elements in the image of~\eqref{e:conj}. We will give the argument for $\ovl{e}_{0,3}$. The case of $\ovl{e}_{n-3,n}$ then follows from the symmetry of Remark~\ref{r:symmetry}. 

Let us write $\sig = \sum_i a_i \ovl{e}_{i,i+1}$. If $a_3=0$ then $[e_{0,3}] \in \U^1/(\mathrm{conjugacy})$ is a $\sig$-invariant lift of $\ovl{e}_{0,3}$ by the formula in~\eqref{e:aide}. If $a_3 \neq 0$ but $a_2=0$ then we write $\ovl{e}_{0,3}$ as the difference between $\ovl{e}_{0,3}+\ovl{e}_{0,2}$ and $\ovl{e}_{0,2}$, both of which are $\sig$-invariant in $\B$ since $a_2=0$. Let $Q = e_{0,3}e_{0,2}$ serve as a lift of $\ovl{e}_{0,3}+\ovl{e}_{0,2}$. By Remark~\ref{r:aid-memoire}, the conjugacy class $\sig([Q])$ can be represented by a matrix $M$ such that $M_{i,j} = Q_{i,j}$ when $j-i \leq 3$ and $M_{i,j} = 0$ when $j>i > 0$.
Then $Q$
is conjugate to $M$ via the element $R := \prod_{j=4}^{n}e_{2,j}^{-M_{0,j}}$ (that is, $RQR^{-1} = M$), so that $[Q] \in (\U^1/(\mathrm{conjugacy}))^\sig$. We conclude that $\ovl{e}_{0,3}+\ovl{e}_{0,2}$ belongs to the image
of~\eqref{e:conj}.
By Lemma~\ref{l:b2},
so does $\ovl{e}_{0,2}$.
Hence $\ovl{e}_{0,3}$ lies in the subgroup generated by the image of~\eqref{e:conj}. 

Finally, suppose that $a_2\neq 0$. The element $b = \sum_{i=0}^{n-2} b_i\ovl{e}_{i,i+2}\in \B$ defined by $b_i := a_ia_{i+1}$ is $\sig$\nobreakdash-invariant and we may write $\ovl{e}_{0,3}$ as a linear combination of $a_2\ovl{e}_{0,3}+b$ and $b$. As above, $b$ lifts to a $\sig$-invariant conjugacy class by Lemma~\ref{l:b2}. Let $Q \in \U^1$ be the matrix such that $Q_{i,i+2} = b_i, Q_{0,3} = a_2$ and all other non-diagonal values vanish, 
so that $Q$ is a lift of $a_2\ovl{e}_{0,3}+b$. Let $M$ be the matrix representing $\sig([Q])$ as in Remark~\ref{r:aid-memoire}.
Then $M_{i,j}=Q_{i,j}$ if $j-i \leq 3$, $M_{i,j} = 0$ if $j-3>i>0$ and 
\[ M_{0,j}=(-1)^{j-3}\prod_{l=2}^{j-1}a_l =  (-1)^{j-3}\left[\prod_{l=2}^{j-3}a_l\right] \cdot b_{j-2} \] 
for $j=4,\dots,n$. (Note that the terms associated with $m=i+2$ in Formula~\eqref{e:aide} vanish by the construction of $b$.)
We then see that $Q$ is conjugate to $M$ via the element $\prod_{j=4}^{n}e_{0,j-2}^{\eps_j}$, where $\eps_j=(-1)^{j-3}\prod_{l=2}^{j-3}a_l$. Indeed, we may write $Q$ as the product $Q = e_{n-2,n}^{b_{n-2}} \dots e_{0,2}^{b_0} e_{0,3}$ and $M$ as $Q\prod_{j=4}^ne_{0,j}^{M_{0,j}}$, while for $j \in \{4,\dots,n\}$ we have $e_{0,j-2}Qe_{0,j-2}^{-1} = Qe_{0,j}^{b_{j-2}}$.
In particular, the conjugacy class $[Q]$ is $\sig$-invariant. Thus $\ovl{e}_{0,3}$ again belongs to the subgroup generated by the image of~\eqref{e:conj}, as desired.
\end{proof}

\begin{lem}\label{l:group-theory2}
If $n \leq 6$, the image of $\Br_{1,\nr}(V)/\Br_0(V) \subseteq H^1(G,\what{\B})$ 
by the natural map
$H^1(G,\what{\B}) \lrar H^1(G,\what{\B}_0)$
is contained in $\Sha^1_{\cyc}(G,\what{\B}_{0})$.
\end{lem}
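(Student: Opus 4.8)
We want to show that if $n\le 6$, the image of $\Br_{1,\nr}(V)/\Br_0(V)$ under the restriction map $H^1(G,\hat\B)\to H^1(G,\hat\B_0)$ lands in $\Sha^1_{\cyc}(G,\hat\B_0)$; that is, for every $\bet$ in $\Br_{1,\nr}(V)/\Br_0(V)$ and every cyclic subgroup $C\subseteq G$, the restriction of (the image of) $\bet$ to $H^1(C,\hat\B_0)$ vanishes. A cyclic subgroup $C$ is generated by a single element $g\in G$, which, via the embedding $G\hookrightarrow \A\times(\ZZ/e)^*$ and Lemma~\ref{l:exponent} (so $e=p$), corresponds to a pair $(\sigma,\chi(g))$ with $\sigma\in\A$. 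Since $\hat\B_0$ is a quotient of $\hat\B$ dual to the inclusion $\B_0\hookrightarrow\B$, and the restriction-to-$C$ map for $\hat\B_0$ factors through $\sigma^*$, what needs to be proven is: $\sigma^*\bet$, viewed in $H^1(\langle g\rangle,\hat\B)$ and then pushed to $H^1(\langle g\rangle,\hat\B_0)$, is zero.

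**Reducing to a dual evaluation statement.**
The key is to combine the defining condition from Proposition~\ref{p:formula} with the group-theoretic input of Lemma~\ref{l:group-theory}. Fix $\bet\in\Br_{1,\nr}(V)/\Br_0(V)$ and $g=(\sigma,\chi(\sig))\in G$ with $\sig\in\A$; let $d$ be the order of $g$. Write $m=\chi(g)$, so $m\in(\ZZ/p)^*$. First I would record that an element of $H^1(\langle g\rangle,\hat\B_0)$ is detected by pairing against $\langle g\rangle$-invariant elements of $\B_0$ appropriately: more precisely, the image of $\sigma^*\bet$ in $H^1(\langle g\rangle,\hat\B_0)\cong \hat{\B}_0^{N_g}/(\dots)$ is zero iff $b_*\sig^*\bet=1$ in $\ovl\mmu_\infty$ for every $b\in\B_0$ with $\sigma(b)=\chi(\sig)\, b = m b$ (this uses that $e=p$ by Lemma~\ref{l:exponent}, so the twisting by $\chi$ acts as multiplication by $m$ on $\hat\B$, hence $H^1(\hat\ZZ,\ovl\mmu_\infty)$-valued pairings land on the $m$-twisted invariants). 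Now Proposition~\ref{p:formula} tells us that $\bar u_*\sig^*\bet = 1$ for every $u\in\U^1$ with $\sig[u]=[u^{m}]$ in $\U^1/(\mathrm{conjugacy})$. Taking $\bar u$ to run over the image of the map~\eqref{e:conj} for this particular $\sig$ (with the caveat that we need $\sig[u]=[u^m]$, not just $\sig[u]=[u]$ — see next paragraph), Lemma~\ref{l:group-theory} gives that this image generates $\B_0\cap\B^{\sig}$. The plan is: $\B_0\cap(\text{$m$-twisted }\sig\text{-invariants})$ equals $\B_0\cap\B^\sig$ when $m\equiv 1$ and is handled separately otherwise, and on this subgroup the pairing $b_*\sig^*\bet$ is forced to be trivial by Proposition~\ref{p:formula}.

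**The main obstacle: matching $\sig[u]=[u^{m}]$ with $\sig[u]=[u]$.**
The subtlety — and I expect this to be the crux — is that Lemma~\ref{l:group-theory} produces lifts $u\in\U^1/(\mathrm{conjugacy})$ with $\sig([u])=[u]$, whereas Proposition~\ref{p:formula} requires $g[u]=[u^{\chi(g)}]$, i.e. $\sig[u]=[u^m]$. When $m\equiv 1\pmod p$ these coincide (since $\U^1$ has exponent dividing... well, $u^m=u$ already in $\B$ but not in $\U^1$ in general — here one uses Lemma~\ref{l:exponent} again, which says $u^{1+mp}$ and $u$ are conjugate, to reduce $u^m$ to $u$ up to conjugacy when $m\equiv 1 \bmod p$). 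When $m\not\equiv 1$, the relevant subgroup of $\B_0$ on which we must check triviality is the $m$-eigenspace $\{b\in\B_0: \sig(b)=mb\}$; one must then either run the argument of Lemma~\ref{l:group-theory} with $u$ replaced by a suitable power or adjust the lifts so that $\sig[u]=[u^m]$ holds, OR — more likely the intended route — observe that the image of $\sig^*\bet$ in $H^1(\langle g\rangle,\hat\B_0)$ is controlled by the Tate cohomology and the full statement reduces, via an inflation–restriction or a norm argument along $\langle g^{p}\rangle\triangleleft\langle g\rangle$ (on which $\chi$ is trivial since $\chi(g)\in(\ZZ/p)^*$ has order dividing $p-1$), to the case $m\equiv 1$ already treated. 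I would structure the write-up so that the $m\equiv 1$ case is proven first by direct combination of Proposition~\ref{p:formula}, Lemma~\ref{l:exponent}, and Lemma~\ref{l:group-theory}, and then the general case is reduced to it by passing to the subgroup $\langle g\rangle\cap\ker\chi$, using that $H^1(\langle g\rangle,\hat\B_0)\hookrightarrow H^1(\langle g\rangle\cap\ker\chi,\hat\B_0)$ up to prime-to-$p$ issues (the relevant groups being $p$-groups tensor a cyclic prime-to-$p$ piece). The remaining verifications — that the pairing $b_*\sig^*\bet$ genuinely computes the restriction of $\sig^*\bet$ to cyclic subgroups, and the bookkeeping with $\ovl\mmu_\infty$ versus $\FF_p$ — are routine given the explicit description of the maps preceding Proposition~\ref{p:formula}.
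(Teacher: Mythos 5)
Your plan is essentially sound and rests on the same three ingredients as the paper's proof: Proposition~\ref{p:formula}, Lemma~\ref{l:group-theory}, and the perfect cup-product pairing $H^0(\hat{\ZZ},\B_0)\times H^1(\hat{\ZZ},\hat{\B}_0)\lrar H^1(\hat{\ZZ},\mmu_p)$ pulled back along $\sig:\hat{\ZZ}\lrar G$. The difference is in how the cyclotomic twist is disposed of. The paper's first move, which you did not consider, is to invoke Corollary~\ref{c:neukirch} (via Proposition~\ref{prop:nopthroot}): if $k$ has no primitive $p$-th root of unity then $\Br_{1,\nr}(V)/\Br_0(V)=0$ and there is nothing to prove, so one may assume $\mmu_p\subseteq k$; since $e=p$ by Lemma~\ref{l:exponent}, the character $\chi:G\lrar(\ZZ/p)^*$ is then \emph{identically trivial}, the condition $\sig[u]=[u^{\chi(\sig)}]$ becomes plain $\sig$-invariance, and the ``main obstacle'' you identify as the crux simply never arises. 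Your alternative of keeping the twist and reducing each cyclic subgroup $\<g\>$ to a subgroup on which $\chi$ is trivial does also work, and even avoids the case distinction on roots of unity, but one of your two formulations of it is wrong: $\chi$ is \emph{not} trivial on $\<g^{p}\>$ (one has $\chi(g^p)=\chi(g)^p=\chi(g)$), and moreover $[\<g\>:\<g^p\>]$ may equal $p$, so restriction to it need not be injective on the $p$-torsion module $\hat{\B}_0$. The correct choice, which you do also state, is $\<g\>\cap\Ker\chi$ (equivalently $\<g^{p-1}\>$, or the Sylow $p$-subgroup of $\<g\>$), whose index divides $p-1$; with that fix, your twisted duality statement (detection of $H^1(\<g\>,\hat{\B}_0)$ by evaluation against $\{b\in\B_0:\sig b=\chi(g)b\}$) is not even needed, since after the reduction one is back in the untwisted situation and the argument proceeds exactly as in the paper. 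In short: same skeleton, a slightly more laborious treatment of the twist where the paper uses Corollary~\ref{c:neukirch} to make it vanish globally, and one repairable slip in the reduction step.
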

\begin{proof}
By Proposition~\ref{p:neukirch}, we may assume that $k$ contains a primitive $p$-th root of unity.
By Lemma~\ref{l:exponent}, the cyclotomic character $\chi\colon G \lrar (\ZZ/e)^* = (\ZZ/p)^*$ is then trivial. 
Let us fix an element $\beta \in H^1(G,\what{\B}_0)$ lying in the image of $\Br_{1,\nr}(V)/\Br_0(V)$. 
By Corollary~\ref{c:special-case}
and Lemma~\ref{l:group-theory}, we find that for every $\sig \in G$, 
if we let~$\ZZ$ act on~$B_0$ and on $\widehat{B}_0$
through~$\sigma$,
the pull-back $\sigma^* \beta \in H^1(\ZZ,\what{\B}_0)$ lies in the right kernel of the cup product pairing
\[ H^0(\ZZ,\B_0) \times H^1(\ZZ,\what{\B}_0) \lrar H^1(\ZZ,\ovl{\mmu}_{\infty}) = \ovl{\mmu}_{\infty} \rlap{.}\]
This pairing being perfect (Poincaré duality for the group $\ZZ$, which amounts to the duality between the $\sig$-invariants of $\B_0$ and the $\sig$-coinvariants of $\what{\B}_0$),
we get that $\sigma^*\beta\in H^1(\ZZ,\what{\B}_0)$ vanishes  
for all $\sigma \in G$. We conclude that the image of $\beta$ in $H^1(G,\what{B}_0)$ lies in $\Sha^1_{\cyc}(G,\what{\B}_0)$, as desired.
\end{proof}

\begin{lem}\label{l:sha}
One has $\Sha^1_{\cyc}(G,\what{\B}_{0})=0$.
\end{lem}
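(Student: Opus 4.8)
The plan is to reduce the statement, by a short chain of standard manipulations, to a completely explicit finite computation, and then to run through the few surviving cases by hand.

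First I would dispose of the case where $k$ does not contain a primitive $p$-th root of unity. In that situation the argument proving Proposition~\ref{prop:nopthroot} applies verbatim to $\hat{\B}_0$: the complement $\FF_p^*\subseteq G$ produced there maps trivially to the $p$-group $\A$, hence acts trivially on $\B_0\subseteq\B$ and by non-trivial scalars on $\hat{\B}_0=\Hom(\B_0,\mmu_p)$, so $H^i(\FF_p^*,\hat{\B}_0)=0$ for all $i$ and the Hochschild--Serre spectral sequence gives $H^1(G,\hat{\B}_0)=0$; there is then nothing to prove. So assume $\mmu_p\subseteq k$. Then $e=p$ by Lemma~\ref{l:exponent}, the cyclotomic character $\chi$ is trivial, the embedding $G\hookrightarrow\A\times(\ZZ/e)^*$ becomes an embedding $G\hookrightarrow\A$, and $\hat{\B}_0=\Hom(\B_0,\FF_p)$ with $G$ acting only through $\B_0$; in particular $G$ is an elementary abelian $p$-group.

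Next let $\bar G$ be the image of $G$ in $\GL(\B_0)$ and $G^0=\Ker(G\to\bar G)$, so $G^0$ acts trivially on $\hat{\B}_0$. For a module with trivial action $\Sha^1_{\cyc}$ vanishes (a homomorphism dying on every cyclic subgroup dies on every element), so the restriction to $G^0$ of any $\beta\in\Sha^1_{\cyc}(G,\hat{\B}_0)$ is zero; hence by inflation--restriction $\beta=\operatorname{inf}(\bar\beta)$ for a unique $\bar\beta\in H^1(\bar G,\hat{\B}_0)$, and applying inflation--restriction once more inside each cyclic subgroup shows $\bar\beta\in\Sha^1_{\cyc}(\bar G,\hat{\B}_0)$. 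Thus it suffices to prove $\Sha^1_{\cyc}(\bar G,\hat{\B}_0)=0$. Now reading off~\eqref{e:T} one finds that $\sigma=\sum_i a_i\ovl{e}_{i,i+1}\in\A$ acts on $\B_0$ by $\ovl{e}_{0,2}\mapsto\ovl{e}_{0,2}-a_2\ovl{e}_{0,3}$ and $\ovl{e}_{n-2,n}\mapsto\ovl{e}_{n-2,n}+a_{n-3}\ovl{e}_{n-3,n}$, fixing $\ovl{e}_{0,3}$ and $\ovl{e}_{n-3,n}$; this depends only on the pair $(a_2,a_{n-3})$, so $\bar{\A}:=\im(\A\to\GL(\B_0))$ has $\FF_p$-rank at most $2$, and exactly $1$ when $n=5$ since then $n-3=2$. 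Consequently $\bar G\subseteq\bar{\A}$ is either cyclic, in which case $\Sha^1_{\cyc}(\bar G,\hat{\B}_0)=0$ at once, or $\bar G=\bar{\A}\cong(\ZZ/p)^2$, which forces $n\in\{3,4,6\}$.

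It remains to treat $n\in\{3,4,6\}$ with $\bar G=\bar{\A}=\langle s,t\rangle\cong(\ZZ/p)^2$, $s,t$ being the images of two standard generators of $\A$. For $n\in\{4,6\}$ the two transvections above are supported on distinct generators, so $\B_0$ splits $\bar{\A}$-equivariantly as $W_1\oplus W_2$ with $t$ trivial on $W_1$ and $s$ trivial on $W_2$; dually $\hat{\B}_0=\hat W_1\oplus\hat W_2$ and, by symmetry, it suffices to prove $\Sha^1_{\cyc}(\bar{\A},\hat W_1)=0$. Since $t$ acts trivially on $\hat W_1$, inflation--restriction for $1\to\langle t\rangle\to\bar{\A}\to\langle s\rangle\to1$ identifies $\Ker(\operatorname{res}_{\langle t\rangle})$ with the image of $\operatorname{inf}\colon H^1(\bar{\A}/\langle t\rangle,\hat W_1)\hookrightarrow H^1(\bar{\A},\hat W_1)$; and since $\langle s\rangle\hookrightarrow\bar{\A}\twoheadrightarrow\bar{\A}/\langle t\rangle$ is an isomorphism of groups under which $\hat W_1$ keeps its module structure, $\operatorname{res}_{\langle s\rangle}\circ\operatorname{inf}$ is an isomorphism, so a class killed by both $\operatorname{res}_{\langle t\rangle}$ and $\operatorname{res}_{\langle s\rangle}$ is zero. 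For $n=3$ one has $\B_0=\B$ of rank $3$, and $\hat{\B}_0$ fits in an extension $0\to\hat{\B}_0^{\bar{\A}}\to\hat{\B}_0\to\FF_p\to0$ of trivial $\bar{\A}$-modules whose class, viewed via a lift $z$ of a generator (so $g\mapsto gz-z$), is an isomorphism $\bar{\A}\to\hat{\B}_0^{\bar{\A}}$; here $\hat{\B}_0^{\langle t\rangle}=\hat{\B}_0^{\bar{\A}}$, so the same inflation--restriction step with $\langle t\rangle$ reduces $\beta\in\Sha^1_{\cyc}(\bar{\A},\hat{\B}_0)$ to the inflation of some $\gamma\colon\langle s\rangle\to\hat{\B}_0^{\bar{\A}}$, and $\operatorname{res}_{\langle s\rangle}\beta=0$, $\operatorname{res}_{\langle st\rangle}\beta=0$ give respectively $\gamma(s)\in(s-1)\hat{\B}_0=\FF_p(sz-z)$ and $\gamma(s)\in(st-1)\hat{\B}_0=\FF_p(stz-z)$; as $g\mapsto gz-z$ is injective and $s,st$ generate $\bar{\A}$, these two lines are distinct, so $\gamma(s)=0$ and $\beta=0$.

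The step I expect to require the most care is the treatment of the three cases $n=3,4,6$: one must extract the precise $\bar{\A}$-module structure of $\hat{\B}_0$ from~\eqref{e:T} — noticing the clean direct-sum splitting when $n=4,6$ and the single non-split extension when $n=3$ — and then push the cohomological bookkeeping through uniformly in $p$, including the degenerate prime $p=2$, where some of the groups $H^1(C,\hat{\B}_0)$ drop in dimension. By contrast the reductions — to $\mmu_p\subseteq k$, to $G\subseteq\A$, and to $\bar G\subseteq\bar{\A}$ — together with the rank bound on $\bar{\A}$, are formal consequences of Lemma~\ref{l:exponent} and of the explicit formula~\eqref{e:T}.
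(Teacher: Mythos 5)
Your proof is correct and, at bottom, follows the same route as the paper's. The paper also begins by making the cyclotomic action on $\mmu_p$ trivial---by restricting to the subgroup $H=\Ker\bigl(G\to(\ZZ/p)^*\bigr)$, whose index is prime to $p$, rather than your case split on $\mmu_p\subseteq k$ followed by passage to the image $\bar G\subseteq\GL(\B_0)$---and then it uses exactly the two module-theoretic facts you use: for $n>3$, the equivariant splitting of $\B_0$ into $\langle\ovl{e}_{0,2},\ovl{e}_{0,3}\rangle$ and $\langle\ovl{e}_{n-3,n},\ovl{e}_{n-2,n}\rangle$, on each of which the action factors through a cyclic quotient; for $n=3$, the extension $0\to\FF_p^2\to\hat{\B}_0\to\FF_p\to0$, which it handles by a linear-algebra statement about homomorphisms $H\to\FF_p^2$ that is equivalent to your ``two distinct lines'' computation with $\langle s\rangle$, $\langle t\rangle$, $\langle st\rangle$. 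One caveat: by invoking Lemma~\ref{l:exponent} (to get $e=p$, $\chi$ trivial, $G\hookrightarrow\A$) and by asserting that a non-cyclic $\bar G$ forces $n\in\{3,4,6\}$, you have tacitly restricted to $n\leq 6$, whereas the lemma carries no such hypothesis, the paper's proof covers all $n\geq 3$, and Lemma~\ref{l:exponent} is not known for $n\geq 7$. This costs nothing essential---the only input you actually need there is that the mod-$p$ cyclotomic character is trivial once $\mmu_p\subseteq k$, which holds for any $e$, and for $n\geq 7$ the two transvections are again supported on distinct generators of $\A$, so your $n=4,6$ argument applies verbatim---but as written your argument establishes the lemma only in the range $n\leq 6$ in which the paper applies it, not as stated.
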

\begin{proof}
Let $H \subseteq G$ be the kernel of the composed map $G \x{\chi}{\lrar} (\ZZ/e)^* \lrar (\ZZ/p)^*$.
The restriction map $H^1(G,\what{\B}_0) \lrar H^1(H,\what{\B}_0)$ is injective
since the index of~$H$ in~$G$ is prime to~$p$ while $\what{\B}_0$ has exponent~$p$.
It will hence suffice to show that $\Sha^1_{\cyc}(H,\what{\B}_0) =0$.

Assume first that $n > 3$. In this case ${\B}_0$ splits as a direct sum of two $H$-modules ${\B}_0 = {\B}_0'\oplus {\B}_0''$ where ${\B}_0'$ is spanned by $\ovl{e}_{0,2},\ovl{e}_{0,3}$ and ${\B}_0''$ is spanned by $\ovl{e}_{n-3,n},\ovl{e}_{n-2,n}$. Furthermore, if we let $f_i\colon \A \lrar \FF_p$ denote the homomorphism such that $f_i(\ovl{e}_{j,j+1}) = \del_{i,j}$ (Kronecker's delta) then the action of $H$ on ${\B}_0'$ factors through the composed map $H \lrar \A \x{f_2}{\lrar} \FF_p$ and the action of $H$ on ${\B}_0''$ factors through the composed map $H \lrar \A \x{f_{n-3}}{\lrar} \FF_p$. In particular, $\B_0$ is a direct sum of two $H$\nobreakdash-modules
on each of which the action of~$H$ factors through a cyclic quotient. The same must then be true for the dual module $\what{\B}_0$, and so $\Sha^1_{\cyc}(H,\what{\B}_0) =0$.
 
Let us now consider the case where $n=3$.
By dualising the short exact sequence
\begin{align}
0 \lrar \FF_p \lrar \B_0 \lrar \FF_p^2 \lrar 0\rlap{,}
\end{align}
where the first map sends~$1$ to $\ovl{e}_{0,3}$ and the quotient
$\FF_p^2=\B/\< \ovl{e}_{0,3}\>$
is generated by the images of $\ovl{e}_{0,2},\ovl{e}_{1,3}$,
we obtain an exact sequence of $H$\nobreakdash-modules
\begin{align}
\label{eq:dualb}
 0 \lrar \FF_p^2 \lrar \what{\B}_0 \lrar \FF_p \lrar 0\rlap{.}
\end{align}
As $\Sha^1_{\cyc}(H,\FF_p)=0$, any element in $\Sha^1_{\cyc}(H,\what{\B}_0)$ 
comes from $H^1(H,\FF_p^2)=\Hom(H,\FF_p^2)$.
Let $T_0 \in \Hom(H,\FF_p^2)$ be the image
of $1 \in H^0(H,\FF_p)$ by
the boundary of~\eqref{eq:dualb}. We are now reduced to verifying
the following easy fact from linear algebra:
if $T \in \Hom(H,\FF_p^2)$
is such that $T(h)$ is a multiple of~$T_0(h)$ for
any $h \in H$, then~$T$ is itself a multiple of $T_0$.
\end{proof}

\begin{proof}[Proof of Proposition~\ref{p:bound_on_brnr1}]
Combine Lemma~\ref{l:group-theory2} and Lemma~\ref{l:sha}.
\end{proof}

\begin{cor}\label{c:n3}
If $n=3$, then $\Br_{\nr}(V) = \Br_0(V)$. 
\end{cor}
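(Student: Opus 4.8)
The goal is to show that when $n=3$, the unramified Brauer group $\Br_{\nr}(V)$ reduces to the constant part $\Br_0(V)$ coming from $\Br(k)$. The strategy is to assemble the pieces that have just been established: the vanishing of the transcendental part, and the very tight control on the algebraic part given by Proposition~\ref{p:bound_on_brnr1} specialized to $n=3$.

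First I would recall that $\Br_{\nr}(V)$ sits in an exact sequence
\begin{align*}
0 \lrar \Br_{1,\nr}(V) \lrar \Br_{\nr}(V) \lrar \Br_{\nr}(\ovl{V})\rlap{,}
\end{align*}
where the last group is the image of $\Br_{\nr}(V)$ in $\Br_{\nr}(\ovl V)$ (the transcendental part). By Proposition~\ref{p:trans} this last group vanishes, so $\Br_{\nr}(V) = \Br_{1,\nr}(V)$: the whole unramified Brauer group is algebraic. It therefore suffices to prove $\Br_{1,\nr}(V) = \Br_0(V)$, i.e.\ that $\Br_{1,\nr}(V)/\Br_0(V) = 0$.

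Next I would invoke Proposition~\ref{p:bound_on_brnr1} with $n=3$. In that case, as noted in the excerpt, $\ovl{e}_{0,3}$ and $\ovl{e}_{n-3,n}=\ovl{e}_{0,3}$ coincide and $\B_0 = \B$ (of rank~$3$), so the map $H^1(G,\hat\B) \lrar H^1(G,\hat\B_0)$ is the identity on $H^1(G,\hat\B)$. Proposition~\ref{p:bound_on_brnr1} then says that $\Br_{1,\nr}(V)/\Br_0(V)$, viewed as a subgroup of $H^1(G,\hat\B) = H^1(G,\hat\B_0)$, is contained in the kernel of the identity map, hence is zero. Combining with the previous paragraph gives $\Br_{\nr}(V) = \Br_0(V)$, as desired.

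There is no real obstacle here: the statement is a direct corollary of Proposition~\ref{p:trans} and Proposition~\ref{p:bound_on_brnr1}, and the only thing to check carefully is the bookkeeping that $\B_0 = \B$ when $n=3$ so that the map in Proposition~\ref{p:bound_on_brnr1} is injective (indeed bijective). I would just write this out in a few lines, being careful to note that $\Br_{\nr}(\ovl V)=0$ is precisely what makes the inclusion $\Br_{1,\nr}(V)\subseteq\Br_{\nr}(V)$ an equality.
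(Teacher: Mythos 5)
Your proof is correct and is essentially the paper's own argument: Proposition~\ref{p:trans} kills the transcendental part, and since $\B_0=\B$ when $n=3$, Proposition~\ref{p:bound_on_brnr1} forces $\Br_{1,\nr}(V)/\Br_0(V)=0$. The bookkeeping about the map $H^1(G,\hat\B)\lrar H^1(G,\hat\B_0)$ being the identity is exactly the point the paper makes, so nothing further is needed.
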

\begin{proof}
When $n=3$, the inclusion $\B_0 \subseteq \B$ is an equality and hence Proposition~\ref{p:bound_on_brnr1} and Proposition~\ref{p:trans} imply the vanishing of $\Br_{\nr}(V)/\Br_0(V)$. 
\end{proof}

\begin{rem}
With more care and more computations, the same ideas lead to a complete determination of the unramified Brauer group of~$V$ also in the cases $n=4,5,6$ (see \S\ref{s:brauer-2}). In particular, when $k$ is a number field and $n=4,5$, we find that 
\[ \Br_{\nr}(V)/\Br_0(V) = \Ba_{\omega}(V) = \Sha^1_{\cyc}(G,\what{\B}) \]
reduces to the group of Brauer elements which are locally constant almost everywhere (modulo constant classes), see Remark~\ref{r:sha-cyc-is-ba}.
We note that this group can be non-trivial (see Example~\ref{e:n4}). When $n=6$, the inclusion $\Ba_\omega(V) \subseteq \Br_{\nr}(V)/\Br_0(V)$ can fail to be an equality.
\end{rem}

\section{Proof of the Massey vanishing conjecture}\label{s:main}
Our goal in this section is to prove the main theorem of this paper:

\begin{thm}\label{t:massey}
The Massey vanishing conjecture holds for every number field $k$, every $n \geq 3$ and every prime $p$.
\end{thm}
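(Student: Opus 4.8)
The plan is to combine the reduction of the Massey vanishing property to the existence of a rational point on the splitting variety~$V$ (Proposition~\ref{p:splitting}), the fibration-method input that trades this for an adelic point orthogonal to $\Br_\nr(V)$ (Proposition~\ref{p:fibration}), and the Brauer-group computations of \S\ref{s:brauer-1}. By Proposition~\ref{p:fibration} it suffices, for a given homomorphism $\alp\colon\Gam_k\lrar\A$ that lifts to $\U/\Z$, to produce a family of local points $(x_v)\in\prod_v V(k_v)$ which is orthogonal to $\Br_\nr(V)$. First I would observe that for each place~$v$ the local condition is automatically satisfiable: by Corollary~\ref{c:dwyer}(2) together with Proposition~\ref{p:splitting}, $V(k_v)\neq\emptyset$ as soon as the restriction $\alp|_{\Gam_{k_v}}$ lifts to~$\U$, and the latter holds because the Massey vanishing conjecture is already known for local fields (case~(2) of the list in the introduction, i.e.\ \cite[Theorem~4.3]{MN17a}) --- the hypothesis that $\alp$ lifts to $\U/\Z$ over~$k$ restricts to the same property over each~$k_v$. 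So $\prod_v V(k_v)\neq\emptyset$, and the only remaining issue is the orthogonality to $\Br_\nr(V)$.

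The strategy for orthogonality is a dévissage on~$n$. For $n=3$ Corollary~\ref{c:n3} gives $\Br_\nr(V)=\Br_0(V)$, the image of $\Br(k)$; any adelic point is automatically orthogonal to $\Br_0(V)$ by the reciprocity law of global class field theory, so we are done immediately in that case. For general~$n$ the idea is to reduce to the case of fewer classes by an induction exploiting the combinatorial structure of~$\U$: the subgroup $\B_0\subseteq\B$ isolated in \S\ref{s:brauer-1} corresponds to the ``end'' variables of the unitriangular matrices, and Proposition~\ref{p:bound_on_brnr1} says that for $3\leq n\leq 6$ the algebraic unramified Brauer classes die on $\B_0$, hence come from the cohomology of the ``middle'' part of the stabilizer. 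Combined with Proposition~\ref{p:trans} (the transcendental part vanishes), this will allow me to relate $\Br_\nr(V)$ for the $n$-fold problem to the Brauer group of the splitting variety attached to an $(n-1)$- or $(n-2)$-fold Massey product, or directly to a local-global statement that follows from the $n\leq 3$ base case by a truncation/projection argument on the tuple $(\alp_0,\dots,\alp_{n-1})$. In practice I expect the cleanest route is: use the $n\le 3$ result (which is unconditional over all fields) to handle the constraints coming from the cyclic subgroups of~$G$, i.e.\ the $\Sha^1_\cyc$ part, and use the bound of Proposition~\ref{p:bound_on_brnr1} to show that no further constraints survive; Lemma~\ref{l:sha} then kills what remains.

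The main obstacle will be controlling $\Br_\nr(V)$ uniformly in~$n$: the explicit analysis of \S\ref{s:brauer-1} (Lemmas~\ref{l:group-theory}, \ref{l:group-theory2}, \ref{l:sha} and Proposition~\ref{p:bound_on_brnr1}) is carried out only for $n\leq 6$, and Lemma~\ref{l:exponent} on the outer exponent of~$\U^1$ is likewise proved only in that range. So a genuine induction on~$n$ is needed that does not require recomputing the Brauer group each time. The key structural fact to exploit is that $\U$ for the $(n+1)\times(n+1)$ problem contains, as Galois-invariant subquotients, copies of the groups~$\U$ attached to smaller~$n$ (obtained by deleting a row and column at one end), and that the corresponding splitting varieties map to one another; an adelic point on~$V$ for the larger problem that is orthogonal to the pulled-back Brauer classes can then be built from one for the smaller problem together with the local solvability established above, using that the ``new'' part of the stabilizer contributes only to~$\Br_0$ or to classes already handled. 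Carrying this fibration/limit argument through carefully --- in particular checking that the inductive hypothesis can be applied to the homomorphism~$\alp$ truncated to $n-1$ classes, which still lifts to the relevant quotient of the smaller~$\U$ --- is where the real work lies; the Brauer-group lemmas of \S\ref{s:brauer-1} then serve precisely to verify that the inductive step introduces no obstruction beyond the reciprocity that $\Br_0$ already satisfies.
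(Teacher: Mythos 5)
There is a genuine gap: your proposal never supplies the mechanism by which the local points are made orthogonal to $\Br_{\nr}(V)$, and the mechanism you sketch in its place does not work. Knowing that $V(k_v)\neq\emptyset$ for all $v$ (via local Massey vanishing) is not enough, because for $4\leq n\leq 6$ the group $\Br_{\nr}(V)/\Br_0(V)$ is genuinely nontrivial in general (cf.\ Corollary~\ref{c:n45} and Example~\ref{e:n4}), so an arbitrary adelic point need not be orthogonal to it; and your claim that ``Lemma~\ref{l:sha} then kills what remains'' misreads that lemma, which concerns $\Sha^1_{\cyc}(G,\hat{\B}_0)$ (not $\hat{\B}$) and is itself an ingredient in the proof of Proposition~\ref{p:bound_on_brnr1}, not an additional vanishing statement. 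The paper's actual proof achieves orthogonality by a different device, which is the heart of the argument and is absent from your plan: fix $r,s$ with $r+s=n-1$ (and $r,s\geq 3$ when $n\geq 7$), form the abelian normal subgroup $\P^{r,s}\subseteq\U^1$ and the auxiliary homogeneous space $W=(\SL_N\times T_{\ovl{\alp}})/\U$ with stabilizer $\P^{r,s}$, mapping to $V$. One shows that every unramified Brauer class of $V$ pulls back to a constant class on $W$ --- trivially when $n\geq 7$ because $\P^{r,s}\subseteq\U^3$ so the map $H^1(k,\hat{\B})\to H^1(k,\hat{\P}^{r,s})$ is zero, and via Proposition~\ref{p:bound_on_brnr1} when $n\leq 6$ because the image of $\P^{r,s}$ in $\B$ lies in $\B_0$ --- so by the projection formula any adelic point of $W$ pushes forward to an adelic point of $V$ orthogonal to $\Br_{\nr}(V)$. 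The remaining local input is then not plain local Massey vanishing but the \emph{refinement} Proposition~\ref{p:local}: over each $k_v$, a homomorphism to $\U/\P^{r,s}$ that lifts to $\U/\Z$ lifts to $\U$. It is this refined local statement, applied to the restriction of the fixed global lift $\wtl{\alp}$ modulo $\P^{r,s}$, that gives the coherence between the local solutions needed for orthogonality.

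Your proposed substitute --- an induction on $n$ by truncating the tuple $(\alp_0,\dots,\alp_{n-1})$ and relating $V$ to the splitting variety of an $(n-1)$- or $(n-2)$-fold product --- is left entirely unexecuted at its crucial step, and there is no evident map between these splitting varieties along which adelic points orthogonal to the respective unramified Brauer groups could be transported; nor does truncation address the range $n\geq 7$, where the computations of \S\ref{s:brauer-1} (Lemma~\ref{l:exponent}, Proposition~\ref{p:bound_on_brnr1}) are unavailable, a difficulty you acknowledge but do not resolve. In the paper this range is in fact the easy one, precisely because the choice $r,s\geq 3$ puts $\P^{r,s}$ inside $\U^3$; the $n\leq 6$ computations exist only because no such choice with $r+s=n-1$ is possible there. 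Without Proposition~\ref{p:local} and the covering $\pi\colon W\to V$ (or an equivalent device), your outline does not yield a proof.
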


Let us henceforth fix a number field $k$, a prime $p$ and an integer $n \geq 3$.
\begin{define}
For $1 \leq r,s \leq n-2$ let us denote by $\P^{r,s} \subseteq \U^1 \subseteq \U$ the subgroup of $\U^1$ generated by the elementary matrices $\{e_{i,n}\}_{i=0}^{r}$ and $\{e_{0,j}\}_{j=n-s}^{n}$.
\end{define}

The matrices in the subgroup $\P^{r,s}$ can be visually depicted as:
\NiceMatrixOptions{code-for-first-row = \color{darkblue}, code-for-last-col = \color{darkblue}}
\[
\begin{pNiceMatrix}[first-row,last-col]
 &   &   &     &   &   &     &   & n-s & \Cdots    &  &  n &  \\
1 & & \color{gray}{0} & \Cdots & \color{gray}{0} & \color{gray}{0} & \Cdots & \color{gray}{0} & \color{darkgreen}{*} & \Cdots & \color{darkgreen}{*} & \color{darkgreen}{*} & 0 \\
\color{gray}{0} &  & 1 &     & \color{gray}{0} & \color{gray}{0} &     & \color{gray}{0} & \color{gray}{0} &     & \color{gray}{0} & \color{darkgreen}{*} &  \smash{\Vdots} \\
\Vdots &&&\ddots &&&&&&&& \Vdots & \\
\color{gray}{0} &  & \color{gray}{0} &     & 1 & \color{gray}{0} &     & \color{gray}{0} & \color{gray}{0} &     & \color{gray}{0} & \color{darkgreen}{*} & r \\
\color{gray}{0} &  & \color{gray}{0} &     & \color{gray}{0} & 1 &     & \color{gray}{0} & \color{gray}{0} &     & \color{gray}{0} & \color{gray}{0} &  \\
\Vdots &&&&&& \ddots &&&&& \Vdots & \\
\color{gray}{0} &  & \color{gray}{0} &     & \color{gray}{0} & \color{gray}{0} &     & 1 & \color{gray}{0} &     & \color{gray}{0} & \color{gray}{0} &  \\
\color{gray}{0} &  & \color{gray}{0} &     & \color{gray}{0} & \color{gray}{0} &     & \color{gray}{0} & 1 &     & \color{gray}{0} & \color{gray}{0} &  \\
\Vdots &&&&&&&&& \ddots && \Vdots & \\
\color{gray}{0} &  & \color{gray}{0} &     & \color{gray}{0} & \color{gray}{0} &     & \color{gray}{0} & \color{gray}{0} &     & 1  & \color{gray}{0} &  \\
\color{gray}{0} &  & \color{gray}{0} & \Cdots  & \color{gray}{0} & \color{gray}{0} & \Cdots & \color{gray}{0} & \color{gray}{0} & \Cdots & \color{gray}{0} & 1 &  \\
\end{pNiceMatrix}
\]

We then observe that:
\begin{enumerate}
\item
The group $\P^{r,s}$ is normal in $\U$ and contains the center $\Z$.
\item
The quotient $\P^{r,s}/\Z$ is an elementary abelian $p$-group of rank $r+s$.
\item\label{i:3}
If $r+s \leq n-1$ (so that $r < n-s$) then $\P^{r,s}$ itself is abelian and the short exact sequence of abelian groups
\begin{align}
\begin{aligned}
\label{e:P}
\xymatrix@R=3ex{
1 \ar[r] & \Z \ar[r] & \P^{r,s} \ar[r] & \P^{r,s}/\Z \ar[r]& 1
}
\end{aligned}
\end{align}
splits.
\end{enumerate}
Using Proposition~\ref{p:splitting} and Proposition~\ref{p:fibration}, the proof of Theorem~\ref{t:massey} will eventually rely on constructing local Massey solutions satisfying certain constraints. Our main tool for constructing such local solutions is the following proposition.
Here and below, by a \emph{local field}, we mean a complete discretely valued field with finite residue field.

\begin{prop}\label{p:local}
Let $K$ be a local field. Let $n \geq 3$ and $1 \leq r,s \leq n-2$.
If $r+s=n-1$, then any homomorphism $\Gam_K \lrar \U/\P^{r,s}$ that lifts to a homomorphism $\Gam_K \lrar \U/\Z$ also lifts to a homomorphism $\Gam_K \lrar \U$.
\end{prop}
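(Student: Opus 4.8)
The plan is to recast the lifting problem cohomologically and reduce it to a statement that local Tate duality can settle. Write $P=\P^{r,s}$, $\ovl P=P/\Z$, and recall from the hypothesis $r+s=n-1$ that $P$ is abelian and $1\to\Z\to P\to\ovl P\to1$ splits (items (2)--(3) above), with $\Z\cong\FF_p$. Fix $\ovl\rho\colon\Gam_K\to\U/P$ together with a lift $\rho'\colon\Gam_K\to\U/\Z$; since the proposition only asserts that \emph{some} homomorphism $\Gam_K\to\U$ lifts $\ovl\rho$, it suffices to show that $\ovl\rho$ lifts to $\U$. The obstruction to lifting $\rho'$ along the central extension $1\to\Z\to\U\to\U/\Z\to1$ lies in $H^2(K,\Z)=H^2(K,\FF_p)$, and this group vanishes whenever $\operatorname{char}K=p$ (then $\mathrm{cd}_p\Gam_K\le1$) or $\mmu_p\not\subseteq K$ (then $H^0(K,\mmu_p)=0$, so local duality gives $H^2(K,\FF_p)=0$); in those cases $\rho'$ itself lifts to $\U$ and we are done. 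From now on $\operatorname{char}K\ne p$ and $\mmu_p\subseteq K$, so $H^2(K,\FF_p)\cong\FF_p$.

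Next I would locate the obstruction to lifting $\ovl\rho$ along the (non-central) extension $1\to P\to\U\to\U/P\to1$: it is a class $\operatorname{Ob}(\ovl\rho)\in H^2(K,P)$, where $\Gam_K$ acts on $P$ through $\ovl\rho$ and conjugation. Its image under $H^2(K,P)\to H^2(K,\ovl P)$ is the obstruction to lifting $\ovl\rho$ to $\U/\Z$, which vanishes by hypothesis; hence, by the long exact sequence of $0\to\Z\to P\to\ovl P\to0$, the class $\operatorname{Ob}(\ovl\rho)$ lies in the image of $\iota_*\colon H^2(K,\Z)\to H^2(K,P)$, with $\iota\colon\Z\hookrightarrow P$.

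Now I would run a dichotomy. Since $H^2(K,\Z)\cong\FF_p$ is simple, $\iota_*$ is either zero, in which case $\operatorname{Ob}(\ovl\rho)=0$ and $\ovl\rho$ lifts to $\U$, or injective. In the second case local Tate duality identifies $\iota_*$ with the dual of the restriction map $H^0(K,\Hom(P,\mmu_p))\to H^0(K,\Hom(\Z,\mmu_p))=\mmu_p$, so that map is surjective; using $\mmu_p\subseteq K$ (so the Galois action on $\mmu_p$ is trivial) this yields a homomorphism $\psi\colon P\to\FF_p$ that is invariant under the conjugation action of $\ovl\rho(\Gam_K)$ and restricts to an isomorphism on $\Z$. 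Equivalently, writing $\widetilde G\subseteq\U$ for the preimage of $\ovl\rho(\Gam_K)$, one has $\Z\cap[\widetilde G,P]=1$, and $P=\Z\oplus\ker\psi$ as $\Gam_K$-modules.

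It remains to handle this last case, which I expect to be the main obstacle. Because $\Z\cap[\widetilde G,P]=1$, the extension $\widetilde G\to\ovl\rho(\Gam_K)$ is the fibre product of $\widetilde G/\Z\to\ovl\rho(\Gam_K)$ with $\widetilde G/[\widetilde G,P]\to\ovl\rho(\Gam_K)$; as $\ovl\rho$ lifts to $\U/\Z$, hence to $\widetilde G/\Z$, lifting $\ovl\rho$ to $\U$ becomes equivalent to lifting it along the \emph{central} extension $\widetilde G/[\widetilde G,P]$ of $\ovl\rho(\Gam_K)$ by the elementary abelian group $P/[\widetilde G,P]$. To show this obstruction vanishes one must use $r+s=n-1$ and the fact that $\Z=\langle e_{0,n}\rangle$ is the full center of $\U$ once more. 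Concretely, I would make the extension class explicit in terms of the matrix entries $M_{0,k}$ and $M_{k,n}$ carried by a lift (the obstruction $2$-cocycle is $\sum_{k=1}^{n-1}M_{0,k}\cup M_{k,n}$, and the constraint $r+s=n-1$ forces each of the $r$ ``free'' last-column entries $M_{k,n}$, $1\le k\le r$, to be paired in this sum only with a fixed first-row entry $M_{0,k}$, and symmetrically for the $s$ free first-row entries), and then kill it by exploiting the torsor of lifts of $\ovl\rho$ to $\U/\Z$ (the ambiguities being $H^1$ of the relevant elementary abelian modules) together with the non-degeneracy of the cup-product pairing $H^1(K,\FF_p)\times H^1(K,\FF_p)\to H^2(K,\FF_p)$ furnished by local duality. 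The bookkeeping for this final matching-and-annihilation step is where the real work lies.
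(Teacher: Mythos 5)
Your reduction is fine up to a point, and in fact your ``easy'' case is a clean repackaging of the paper's first case: the obstruction $\mathrm{Ob}(\ovl\rho)\in H^2(K,\P^{r,s})$ lies in the image of $\iota_*\colon H^2(K,\Z)\to H^2(K,\P^{r,s})$, and by the exact sequence for $1\to\Z\to\P^{r,s}\to\P^{r,s}/\Z\to1$ together with local duality, $\iota_*=0$ exactly when this sequence does \emph{not} split $\Gam_K$-equivariantly (the boundary $H^1(K,\P^{r,s}/\Z)\to H^2(K,\Z)$ is cup product with the class $\gamma$ of the torsor of retractions, and $\gamma\neq0$ forces it to be surjective). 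But the case you label ``the main obstacle'' --- $\iota_*$ injective, equivalently the existence of a $\Gam_K$-equivariant retraction $\psi\colon\P^{r,s}\to\Z$ --- is precisely where the entire nonformal content of the proposition sits, and your proposal does not prove it. Worse, the mechanism you sketch for it cannot work: in this case $\gamma=0$, so the boundary map $H^1(K,\P^{r,s}/\Z)\to H^2(K,\Z)$ vanishes identically, hence \emph{every} lift of $\ovl\rho$ to $\U/\Z$ carries the same obstruction class; ``exploiting the torsor of lifts'' together with non-degeneracy of the cup product moves nothing, since that non-degeneracy was already spent in setting up the dichotomy. (Your intermediate reduction is also shaky: $\widetilde G$ is \emph{not} in general the fibre product of $\widetilde G/\Z$ and $\widetilde G/[\widetilde G,P]$ over $\widetilde G/P$ --- that would require $P=\Z\cdot[\widetilde G,P]$ --- and when one repairs this cohomologically, using that $\Z\to P/[\widetilde G,P]$ is split injective via $\psi$, the ``reduced'' lifting problem turns out to be equivalent to the original one, so no progress is made.)

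What is actually needed, and what the paper does, is a direct group-theoretic splitting argument in this case: write the equivariant retraction explicitly as $\rho_{u,v}(Q)=u(Q-I)v$ with $u_0v_n=1$, let $\mS\subseteq\U$ be the subgroup of matrices whose conjugation action preserves $\rho_{u,v}$ (it contains $\P^{r,s}$, and $\mS/\Z$ contains the image of the given lift $\wtl\alp$ by equivariance), and check that the \emph{same formula} $M\mapsto u(M-I)v$ defines a homomorphism $\mS\to\Z$ extending $\rho_{u,v}$, because membership in $\mS$ forces $u(M-I)\in\hat W^{n-r-1}$ and $(M-I)v\in W_{n-s-1}$, whence $u(M_1-I)(M_2-I)v=0$ for $M_1,M_2\in\mS$ --- this is exactly where $r+s=n-1$, i.e.\ $(r+1)+(s+1)=n+1$, enters. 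This retraction splits the central extension $1\to\Z\to\mS\to\mS/\Z\to1$, so the given lift $\wtl\alp$ itself lifts to $\U$, with no adjustment and no further duality input. Your write-up contains no substitute for this step, and without it (or something equivalent) the proposition is not proved.
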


\begin{rem}
In the situation of Proposition~\ref{p:local}, if we were to replace the subgroup $\P^{r,s} \subseteq \U$ by the subgroup $\U^1 \subseteq \U$, then we would obtain the statement that $n$-fold Massey products in $K$ vanish as soon as they are defined (a statement which is well known to hold, see \cite[Theorem 4.3]{MN17a}). Since $\Z \subseteq \P^{r,s} \subseteq \U^1$ we may consider Proposition~\ref{p:local} as a \emph{refinement} of the Massey vanishing property for $K$. 
\end{rem}

The proof of Proposition~\ref{p:local} will require the following homological algebra lemma.

\begin{lem}\label{l:boundary}
Let $\Gam$ be a profinite group and
\begin{equation}\label{eq:short-exact-sequence}
0 \to A \xrightarrow{\iota} B \xrightarrow{\kappa} C \to 0 
\end{equation}
be a short exact sequence of discrete $\Gam$-modules
whose underlying sequence of abelian groups is split.
The
set of homomorphisms $C \to B$ that are sections of~$\kappa$
is
equipped with a natural action of~$\Gam$ by conjugation
and  then
forms a
torsor under the $\Gam$-module $\Hom(C,A)$.
Let $[\gam] \in H^1(\Gam,\Hom(C,A))$ be the class of this torsor. Then for any $m \geq 0$, the boundary map
\( H^m(\Gam,C) \to H^{m+1}(\Gam,A) \)
is given by cup product with $[\gam]$ (with respect to the canonical pairing $C \otimes \Hom(C,A) \to A$).
\end{lem}

\begin{proof}
Identifying $H^{\bullet}(\Gam,C) = \Ext^{\bullet}_{\Gam}(\ZZ,C)$ the boundary map \(H^m(\Gam,C) \to H^{m+1}(\Gam,A)\) is given, almost by definition, by the composition pairing with the element \(\eta \in \Ext_{\Gam}(C,A)\) classifying the short exact sequence~\eqref{eq:short-exact-sequence}. The result now follows from the compatibility of composition and cup pairings, see 
\cite[Proposition~0.14~(a)]{milneadtnew} (with $M=\Hom(C,A)$, $N=C$ and $P=A$).
\end{proof}

\begin{proof}[Proof of Proposition~\ref{p:local}]
If $K$ is of characteristic $p$ or $K$ does not contain a primitive $p$-th root of unity then $H^2(K,\FF_p) = 0$ (see~\cite[Theorem 9.1]{Koc13} in the former case and use local Tate duality for the latter), in which case the lemma trivially holds: any homomorphism $\Gam_K \lrar \U/\Z$ lifts to $\Z$ since the obstruction lives in $H^2(K,\Z)=H^2(K,\FF_p)$. We may hence assume that $K$ is of characteristic $\neq p$ and contains a primitive $p$-th root of unity.

Let us fix a homomorphism $\ovl{\alp}\colon\Gam_K \lrar \U/\P^{r,s}$
and a homomorphism $\wtl{\alp}\colon \Gam_K \lrar \U/\Z$ which lifts it. 
Then the obstruction to solving the lifting problem
\begin{align}
\begin{aligned}
\label{e:embedding-U-Z}
\xymatrix@R=3ex{
& & & \Gam_K \ar^{\wtl{\alp}}[d] \ar@{-->}[dl] & \\
1 \ar[r] & \Z \ar[r] & \U \ar[r] & \U/\Z \ar[r] & 1 \\
}
\end{aligned}
\end{align}
is encoded by the pull-back $\wtl{\alp}^*\eta \in H^2(K,\Z)$ along $\wtl{\alp}$ of the element $\eta \in H^2(\U/\Z,\Z)$ classifying the central extension of the bottom row in~\eqref{e:embedding-U-Z}. A 2-cocycle representing $\wtl{\alp}^*\eta$ can be obtained by choosing a 1-cochain $\hat{\alp}\colon \Gam_K \to \U$ lifting $\wtl{\alp}$, in which case $\wtl{\alp}^*\eta = [\phi]$ for the 2-cocycle $\phi_{\sig,\tau} := \hat{\alp}_{\sig}\hat{\alp}_{\tau}\hat{\alp}_{\sig\tau}^{-1}$.

Since $\P^{r,s}$ and $\Z$ are normal in $\U$ and $\Z$ is central, the conjugation action of~$\U$ on itself induces a compatible action of $\U/\Z$ on all the groups appearing in~\eqref{e:P}, and we subsequently consider it as a short exact sequence of Galois modules
by pulling back
this action 
along $\wtl{\alp}$.  
We may then consider the resulting boundary map
\begin{align}
\label{eq:boundaryfirst}
\partial\colon H^1(K,\P^{r,s}/\Z) \lrar H^2(K,\Z)\rlap{,}
\end{align}
where we point out that the Galois action on~$\Z$ is trivial, since so is the conjugation action of~$\U$ on~$\Z$.
According to~\eqref{i:3}, the sequence~\eqref{e:P} splits as a short exact sequence of abelian groups.
We now separately discuss the case where it splits $\Gam_K$-equivariantly and the case
where it does not.

Consider first the case where~\eqref{e:P} does not split $\Gam_K$-equivariantly.
Then the collection of (non-equivariant) sections  
$\P^{r,s}/\Z \to \P^{r,s}$ 
forms a non-trivial torsor under the Galois module $\Hom(\P^{r,s}/\Z,\Z)$, classified by some non-zero $[\gam] \in H^1(K,\Hom(\P^{r,s}/\Z,\Z))$, and by Lemma~\ref{l:boundary} the boundary map~\eqref{eq:boundaryfirst} is given by taking the cup product with $[\gam]$. As $[\gam]\neq 0$, as $\P^{r,s}$ is a $p$-torsion module and as
$Z \simeq \mmu_p$,
local duality for finite abelian Galois modules implies that this boundary map is surjective.
It follows that there exists a $1$-cocycle $\beta\colon \Gam_K \lrar \P^{r,s}/\Z$ such that $\partial[\beta] = -\wtl{\alp}^*\eta \in H^2(K,\Z)$. We note that the class $\partial[\beta]$ can be represented explicitly by choosing a 1-cochain $\hat{\beta}\colon \Gam_k \to \P^{r,s}$ lifting $\beta$, in which case $\partial[\beta] = [\psi]$ for the 2-cocycle $\psi_{\sig,\tau} := \hat{\beta}_{\sig}(\hat{\beta}_{\tau})^{\sig}\hat{\beta}_{\sig\tau}^{-1}$, where $(\hat{\beta}_{\tau})^{\sig}$ denotes the result of the action of $\sig$ on~$\hat{\beta}_{\tau}$. Since $-(\wtl{\alp}^*\eta + \partial[\beta]) = 0 \in H^2(K,\Z)$ there exists a 1-cochain $\xi\colon \Gam_K \to \Z$ such that $\xi_{\sig}\xi_{\tau}\xi_{\sig\tau}^{-1} = (\phi_{\sig,\tau}\psi_{\sig,\tau})^{-1}$. 
We now claim that the map
 $\hat{\alp}'\colon \Gam_K \lrar \U$
defined by the pointwise product
$ \hat{\alp}'_{\sig} = \hat{\bet}_{\sig}\hat{\alp}_{\sig}\xi_{\sig}$
is a group homomorphism. Indeed
\begin{align*}
\hat{\alp}'_{\sig}\hat{\alp}'_{\tau} & =
\hat{\beta}_{\sig}\hat{\alp}_{\sig}\xi_{\sig}\hat{\beta}_{\tau}\hat{\alp}_{\tau}\xi_{\tau} =
\hat{\beta}_{\sig}\hat{\alp}_{\sig}\hat{\beta}_{\tau}\hat{\alp}_{\tau}\xi_{\sig}\xi_{\tau} =
\hat{\beta}_{\sig}(\hat{\beta}_{\tau})^{\sig}\hat{\alp}_{\sig}\hat{\alp}_{\tau}\xi_{\sig}\xi_{\tau} \\
& = \psi_{\sig,\tau}\hat{\beta}_{\sig\tau}\phi_{\sig,\tau}\hat{\alp}_{\sig\tau}\xi_{\sig}\xi_{\tau} = \hat{\beta}_{\sig\tau}\hat{\alp}_{\sig\tau}\xi_{\sig\tau} = \hat{\alp}'_{\sig\tau} \rlap.
\end{align*}
We have thus found a lift of $\ovl{\alp}$ to a homomorphism $\Gam_K \lrar \U$. 

We now consider the case where~\eqref{e:P} splits $\Gam_K$-equivariantly. We will show that in this case $\wtl{\alp}$ itself lifts to $\U$.
To this end, let us identify $\Z$ with $\FF_p$ via the generator $e_{0,n} \in \Z$.
Let $W = \FF_p^{n+1}$ be the vector space of $(n+1)$-dimensional ``column vectors'', so that we have a natural left action of the ring of square matrices $\Mat_{n+1}(\FF_p)$ on $W$ via matrix multiplication on the left. Let $W^* = \Hom(W,\FF_p)$ be the dual space of $W$, which we can identify with the vector space of ``row vectors'' via the natural scalar product of row vectors and column vectors. In particular, we have a right action of $\Mat_{n+1}(\FF_p)$ on~$W^*$ via matrix multiplication on the right. Given $v \in W,u \in W^*$ and $M \in \Mat_{n+1}(\FF_p)$ we may then consider the associated scalar $uMv \in \FF_p$, obtained by applying the functional $u$ to the vector $Mv$, or, equivalently, the functional $uM$ to the vector $v$.
Given $v \in W, u \in W^*$ we may then consider the homomorphism
\begin{equation}\label{e:rho} 
\rho_{u,v}\colon \P^{r,s} \lrar \FF_p = \Z\text{,} \quad\quad\quad \rho_{u,v}(Q) = u(Q-I)v, 
\end{equation}
where $I \in \Mat_{n+1}(\FF_p)$ denotes the identity matrix. The map $\rho_{u,v}$ is indeed a group homomorphism since
\[ u(Q_1Q_2-I)v = u(Q_1-I)v + u(Q_2-I)v + u(Q_1-I)(Q_2-I)v \]
and $(Q_1-I)(Q_2-I) = 0$ for every $Q_1,Q_2 \in \P^{r,s}$ when $r+s=n-1$. Now, for $m\in \{0,\dots,n\}$, let $W^m \subseteq W$ be the $(m+1)$-dimensional subspace consisting of those column vectors whose last (bottom) $n-m$ coordinates vanish, and 
let $W^*_m \subseteq W^*$ denote the $(m+1)$-dimensional subspace consisting of those row vectors whose first (left) $n-m$ coordinates vanish. We note that the left action of $\U$ on $W$ preserves the filtration $W^0 \subseteq  \cdots \subseteq W^n=W$ and the right action of $\U$ on $W^*$ preserves the filtration $W^*_0 \subseteq \cdots \subseteq W^*_{n}$. We also note that the subgroup $\P^{r,s} \subseteq \U$ acts trivially on $W^*_{n-r-1}$ and $W^{n-s-1}$.
Since $\rho_{u,v}(e_{i,n}) = u_iv_n$ for $i=0,\dots,r$ and $\rho_{u,v}(e_{0,j}) = u_0v_j$ for $j=n-s,\dots,n$, we conclude:
\begin{enumerate}[label=(\roman*)]
\item\label{i:i}
The homomorphism $\rho_{u,v}$ is a retraction of $\Z \subseteq \P^{r,s}$ if and only if $u_0v_n=1$.
\item\label{i:ii}
Every retraction $\P^{r,s} \lrar \Z$ can be written as $\rho_{u,v}$ for some $u \in W^*, v \in W$.
\item\label{i:iii}
For $u,u' \in W^*$ and $v,v'\in W$ such that $u_0=u_0' \neq 0$ and $v_n=v_n' \neq 0$, we have $\rho_{u,v} = \rho_{u',v'}$ if and only if $u=u'$ mod $W^*_{n-r-1}$ and $v=v'$ mod $W^{n-s-1}$.
\end{enumerate}
Now by the assumption that~\eqref{e:P} splits equivariantly and by~\ref{i:i}--\ref{i:ii} above, there exist $u\in W^*$ and $v \in W$, with $u_0v_n=1$, such that the retraction $\rho_{u,v}\colon \P^{r,s} \lrar \Z$ is $\Gam_K$-equivariant. Let $\mS \subseteq \U$ be the subgroup consisting of those matrices $M \in \U$ such that $\rho_{u,v}(MQM^{-1}) = \rho_{u,v}(Q)$ for every $Q \in \P^{r,s}$,
i.e., such that $\rho_{uM,M^{-1}v}=\rho_{u,v}$.
As $\P^{r,s}$ is abelian we see that $\mS$ contains $\P^{r,s}$ and as $\rho_{v,u}$ is $\Gam_K$-equivariant, the subgroup $\mS/\Z \subseteq \U/\Z$ contains the image of $\wtl{\alp}\colon \Gam_K \lrar \U/\Z$. To finish the proof it will hence suffice to show that the map $\mS \lrar \mS/\Z$ admits a section, which, since $\Z$ is central, is equivalent to the assertion that the inclusion $\Z \subseteq \mS$ admits a retraction. In fact, we will show that the retraction $\rho_{u,v}\colon\P^{r,s} \lrar \Z$ itself extends to $\mS$. 

Applying~\ref{i:iii} to $(u',v')=(uM,M^{-1}v)$ and noting that $M W^{n-s-1} = W^{n-s-1}$, we see that
\[
M \in \mS \quad\Longleftrightarrow\quad u(M-I) \in W^*_{n-r-1} \quad\text{and}\quad (M-I)v \in W^{n-s-1}\rlap{.}
\]
As $(r+1)+(s+1)=n+1$, it follows that
$u(M_1-I)(M_2-I)v=0$
for any $M_1,M_2 \in S$.
As $u(M_1M_2-I)v = u(M_1-I)v + u(M_2-I)v + u(M_1-I)(M_2-I)v$,
we conclude that the formula $M \mapsto u(M-I)v$ provides an extension of $\rho_{v,u}$ to a homomorphism $\mS \lrar \Z$ yielding a retraction of the inclusion $\Z \subseteq \mS$, as desired.
\end{proof}

\begin{proof}[Proof of Theorem~\ref{t:massey}]
Let $\alp\colon \Gam_k \lrar \A$ be a homomorphism which lifts to a homomorphism $\wtl{\alp}\colon\Gam_k \lrar \U/\Z$. Using Dwyer's formulation as summarized in Corollary~\ref{c:dwyer}(3), what we need to show is that $\alp$ lifts to a homomorphism $\Gam_k \lrar \U$. 

Let $V = (\SL_N \times T_\alp)/\U$ be the homogeneous space of $\SL_N$ (with $N=|\U|+1$), over~$k$, associated with~$\alp$ by the construction of Pál--Schlank (see \S\ref{s:dwyerpalschlank}
and in particular~\eqref{eq:defv}). By Proposition~\ref{p:splitting}, our problem is equivalent to showing that $V$ has a rational point.
By Proposition~\ref{p:fibration}, which rests on~\cite[Théorème~B]{HW18},
it will suffice to show that $V$ contains a collection of local points orthogonal to $\Br_{\nr}(V)$
with respect to the Brauer--Manin pairing.

Let $1 \leq r,s \leq n-2$ be such that $r+s=n-1$ and let $\ovl{\alp}\colon \Gam_k \lrar \U/\P^{r,s}$ be the composition of $\wtl{\alp}$ with the projection $\U/\Z \lrar \U/\P^{r,s}$. When $n \geq 7$ we also impose the condition that $r,s \geq 3$. We may now repeat the construction of Pál--Schlank, this time for the embedding problem
\begin{align}
\begin{aligned}
\label{e:embedding-4}
\xymatrix@R=3ex{
& & & \Gam_k \ar^{\ovl{\alp}}[d] \ar@{-->}[dl] & \\
1 \ar[r] & \P^{r,s} \ar[r] & \U \ar[r] & \U/\P^{r,s} \ar[r] & 1 .\\
}
\end{aligned}
\end{align}
More precisely, let $T_{\ovl{\alp}} \lrar \spec(k)$ be the $k$-torsor under $\U/\P^{r,s}$ determined by
the homomorphism $\ovl{\alp}\colon \Gam_k \lrar \U/\P^{r,s}$ viewed as a $1$\nobreakdash-cocycle,
and let $W := (\SL_{N} \times T_{\ovl{\alp}})/\U$ be the quotient variety of $\SL_{N} \times T_{\ovl{\alp}}$ under the diagonal action of $\U$ on the right.
The left action of $\SL_N$ on the first factor of the product $\SL_N \times T_{\ovl{\alp}}$ descends uniquely to $W$, exhibiting it as a homogeneous space of $\SL_N$ with geometric stabilizer $\P^{r,s}$. In addition, the natural map $\SL_N \times T_{\ovl{\alp}} \lrar \SL_N \times T_{\alp}$ descends to an $\SL_N$-equivariant map $\pi\colon W \lrar V$ which realizes over~$k$ the covering map  $\SL_{N,\ovl{k}}/\P^{r,s} \lrar \SL_{N,\ovl{k}}/\U^1 \cong V_{\ovl{k}}$.

When $n \geq 7$, the conditions $r,s \geq 3$ and $r+s=n-1$ imply that $n-r,n-s \geq 4$ and hence that $\P^{r,s}$ is contained in $\U^3 = \Ker[\U^1 \lrar \B]$. It follows that the horizontal maps in the commutative diagram  
\begin{align}
\begin{aligned}
\label{eq:brvw}
\xymatrix{
\Br_{1,\nr}(V)/\Br_0(V)\ar@{^{(}->}[d]\ar[r] & \Br_{1,\nr}(W)/\Br_0(V) \ar@{^{(}->}[d] \\
H^1(k,\what{\B}) \ar[r] & H^1(k,\what{\P}^{r,s})
}
\end{aligned}
\end{align}
vanish.
When $n \leq 6$, the image of $\P^{r,s}$ in $\B$ is at least contained in the subgroup $\B_0 \subseteq \B$ introduced
after Remark~\ref{r:beforebzero},
and hence, by Proposition~\ref{p:bound_on_brnr1}, the top horizontal map in~\eqref{eq:brvw} still vanishes.
Thus, in any case,
the pull-back of any algebraic unramified  
Brauer class on $V$ (and hence any unramified Brauer class by Proposition~\ref{p:trans}) 
becomes constant in $\Br(W)$. 
In view of the projection formula, it follows that for any $(y_v) \in \prod_v W(k_v)$,
the family $(\pi(y_v)) \in \prod_v V(k_v)$ is orthogonal to $\Br_{\nr}(V)$.
We are thus reduced to checking that $W(k_v)\neq\emptyset$ for any place~$v$ of~$k$.
By~\cite[Theorem 9.6]{PS16}, this is equivalent to the embedding problem~\eqref{e:embedding-4} being locally solvable, which is exactly what Proposition~\ref{p:local} provides when applied to the restriction of~$\ovl{\alp}$ to each $\Gam_{k_v} \subseteq \Gam$. 
\end{proof}

\section{More Brauer group computations}\label{s:brauer-2}

In this section we give more elaborate computations of the unramified Brauer group of our splitting variety $V$ when $n=4,5$, which may be interesting in their own right, but are not strictly needed for the proof of the main theorem.
\begin{prop}\label{p:n45}
Suppose that $n=4,5$ and let $\sig \in \A$ be an element. Then the subgroup of $\B^{\sigma}$ generated by the image of the map
\begin{equation}\label{e:conj-2}
(\U^1/(\mathrm{conjugacy}))^\sigma \lrar \B^\sigma
\end{equation} 
is all of $\B^{\sigma}$.
\end{prop}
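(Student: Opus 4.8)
The plan is to follow the same strategy as in the proof of Lemma~\ref{l:group-theory}, but now to push it through for \emph{all} of $\B$ rather than merely for $\B_0$, exploiting the fact that for $n=4,5$ the group $\U^1$ is small enough that every basis vector $\ovl e_{k,l}$ of $\B$ can be lifted to a $\sig$\nobreakdash-invariant conjugacy class after a bounded modification. First I would record, using~\eqref{e:T}, exactly which $\ovl e_{k,l}$ are $\A$\nobreakdash-invariant: for $n=4$ these are $\ovl e_{0,2},\ovl e_{2,4}$ together with $\ovl e_{0,3},\ovl e_{1,4}$ (the ones with $l-k=3$), while $\ovl e_{1,3}$ transforms nontrivially; for $n=5$ one has the $l-k=3$ vectors $\ovl e_{0,3},\ovl e_{1,4},\ovl e_{2,5}$ as $\A$\nobreakdash-invariants, and the $l-k=2$ vectors $\ovl e_{0,2},\ovl e_{1,3},\ovl e_{2,4},\ovl e_{3,5}$ transform as in Lemma~\ref{l:b2}. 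The point of Lemma~\ref{l:b2} is that any $\sig$\nobreakdash-invariant combination of the $\ovl e_{i,i+2}$'s lifts to a $\sig$\nobreakdash-invariant conjugacy class; so the only vectors that require genuine work are the $\ovl e_{i,i+3}$'s, which are individually $\A$\nobreakdash-invariant but whose obvious lift $[e_{i,i+3}]$ need not be $\sig$\nobreakdash-invariant when $a_{i}$ or $a_{i+3}$ (the relevant entries of $\sig$) are nonzero.

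The core of the argument is therefore: for each $i$ with $i+3\le n$, show $\ovl e_{i,i+3}$ lies in the subgroup of $\B^{\sig}$ generated by the image of~\eqref{e:conj-2}. By the $\tau$\nobreakdash-symmetry of Remark~\ref{r:symmetry} it suffices to treat $\ovl e_{0,3}$ (and, when $n=5$, also $\ovl e_{1,4}$, which is self-symmetric under $\tau$). For $\ovl e_{0,3}$ I would reuse verbatim the case analysis of Lemma~\ref{l:group-theory} according to whether $a_3=0$, $a_3\ne0=a_2$, or $a_2\ne0$: in each case that proof exhibits an explicit element of $\U^1$ conjugating the chosen lift to a matrix supported in degrees $\le 3$, hence produces a $\sig$\nobreakdash-invariant conjugacy class mapping to $\ovl e_{0,3}$ modulo the span of the $\ovl e_{i,i+2}$'s, and Lemma~\ref{l:b2} absorbs the correction. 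For $\ovl e_{1,4}$ when $n=5$ the same three-case analysis applies with indices shifted by~$1$ (replacing $a_2,a_3$ by $a_3,a_4$ and the conjugating elements $e_{2,j}$, $e_{0,j-2}$ by $e_{3,j}$, $e_{1,j-2}$); here the relevant range of $j$ is just $j=5$, so the calculation is even shorter. Once every $\ovl e_{k,l}$ with $l-k=3$ is accounted for, combining with Lemma~\ref{l:b2} shows that an arbitrary $b\in\B^{\sig}$, written as its $l-k=3$ part plus its $l-k=2$ part, lies in the subgroup generated by the image of~\eqref{e:conj-2}.

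The main obstacle is the same bookkeeping subtlety already flagged in Lemma~\ref{l:group-theory}: the naive lift of an $\A$\nobreakdash-invariant element of $\B$ to $\U^1/(\mathrm{conjugacy})$ is generally \emph{not} $\sig$\nobreakdash-invariant, because the formula~\eqref{e:aide} produces spurious higher-degree entries $M_{0,j}$ for $j\ge 4$; the work is in checking that these entries are precisely of the form that can be killed by conjugating by an explicit product of elementary matrices, so that the original conjugacy class was $\sig$\nobreakdash-fixed all along. For $n=4$ there are no such $j\ge 5$ terms and $j=4$ is handled exactly as in Lemma~\ref{l:group-theory}; for $n=5$ one extra index $j=5$ appears, and the verification that $e_{1,j-2}$ (resp.\ $e_{3,j}$) conjugates correctly is the routine-but-load-bearing computation, entirely parallel to the displayed identities $e_{0,j-2}Qe_{0,j-2}^{-1}=Qe_{0,j}^{b_{j-2}}$ in the proof of Lemma~\ref{l:group-theory}. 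No new idea beyond those two lemmas is needed; it is purely a matter of confirming that the case analysis there, which was run only to capture $\B_0$, in fact captures all of $\B$ once $n\le 5$.
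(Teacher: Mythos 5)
Your $n=4$ argument is fine and is exactly the paper's (split a $\sig$\nobreakdash-invariant element into its $j-i=3$ part, which lies in $\B_0$, and its $j-i=2$ part, then apply Lemma~\ref{l:group-theory} and Lemma~\ref{l:b2}). The gap is in the $n=5$ case, at the single basis vector $\ovl{e}_{1,4}$ not contained in $\B_0$ --- and this is precisely the point of Proposition~\ref{p:n45}. Your claim that the three-case analysis for $\ovl{e}_{0,3}$ in Lemma~\ref{l:group-theory} goes through for $\ovl{e}_{1,4}$ ``with indices shifted by one'' is false, because that analysis exploits the fact that $0$ is an extreme index. Concretely: (i) for $n=5$ every lift of $\ovl{e}_{1,4}$ to $\U^1$ is \emph{central} in $\U^1$ (no generator $e_{i,j}$ of $\U^1$ with $j-i\geq 2$ has $j=1$ or $i=4$), so each lift is its own conjugacy class; on the other hand, \eqref{e:aide} applied to $Q=e_{1,4}$ (or to $e_{1,4}z$ with $z\in\U^3$) produces a representative of $\sig([Q])$ whose $(0,4)$\nobreakdash-entry is shifted by $a_0$ and whose $(1,5)$\nobreakdash-entry is shifted by $-a_4$. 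Hence \emph{no} lift of $\ovl{e}_{1,4}$ alone is $\sig$\nobreakdash-invariant as soon as $a_0\neq 0$ or $a_4\neq 0$; in particular your first case ``$a_4=0$'' already fails whenever $a_0\neq 0$. (ii) In your second case ``$a_4\neq 0$, $a_3=0$'', the shifted decomposition $\ovl{e}_{1,4}=(\ovl{e}_{1,4}+\ovl{e}_{1,3})-\ovl{e}_{1,3}$ is not a decomposition into $\sig$\nobreakdash-invariant elements: by \eqref{e:T} one has $\sig(\ovl{e}_{1,3})=\ovl{e}_{1,3}+a_0\ovl{e}_{0,3}-a_3\ovl{e}_{1,4}$, which is not fixed when $a_0\neq 0$, even if $a_3=0$ (the paper's case~2 works for $\ovl{e}_{0,2}$ only because there is no index to the left of $0$). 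The same asymmetry undermines the shifted version of the third case, whose conjugation trick kills discrepancies lying in row~$0$ only, whereas for $\ovl{e}_{1,4}$ discrepancies appear both in row~$0$ and in column~$5$.

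This is exactly where the paper's proof introduces a genuinely new mechanism instead of reusing Lemma~\ref{l:group-theory}: it first reduces to $a_0,a_4\neq 0$ (if $a_4=0$, resp.\ $a_0=0$, the $\sig$\nobreakdash-stable subgroup generated by the $e_{i,j}$ with $j\leq 4$, resp.\ $i\geq 1$, lets one quote the $n=4$ case), then chooses a $\sig$\nobreakdash-invariant $b=\sum_i b_i\ovl{e}_{i,i+2}$ with $b_0b_1\neq0$, $b_2b_3\neq 0$ or $b_0b_3\neq 0$, and proves that $\ovl{e}_{1,4}+b$ lifts to a \emph{unique} conjugacy class --- by showing that the homomorphism $\Theta_Q\in\Hom(\B,\U^3)$ attached to a lift $Q$ is surjective --- so that this class is $\sig$\nobreakdash-invariant for free; $\ovl{e}_{1,4}$ is then recovered as $(\ovl{e}_{1,4}+b)-b$ using Lemma~\ref{l:b2}. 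Without an argument of this kind your proposal does not establish the $n=5$ case. (A minor further slip: your list of $\A$\nobreakdash-invariant basis vectors is off --- e.g.\ $\ovl{e}_{2,3}$ sends $\ovl{e}_{0,2}$ to $\ovl{e}_{0,2}-\ovl{e}_{0,3}$, so $\ovl{e}_{0,2}$ and $\ovl{e}_{2,4}$ are not $\A$\nobreakdash-invariant; only the $j-i=3$ vectors are. This does not affect the structure of your argument, since what you actually use is that the $j-i=3$ part of a $\sig$\nobreakdash-invariant element is itself $\sig$\nobreakdash-invariant.)
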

\begin{proof}
When $n=4$ the subgroup $\B_0 \cap \B^\sigma$ contains all $\ovl{e}_{i,j}$ with $j-i=3$ and hence the combination of Lemma~\ref{l:group-theory} and Lemma~\ref{l:b2} implies that for every $\sig \in \A$ the image of~\eqref{e:conj-2} generates $\B^{\sigma}$. 

Let us now consider the case $n=5$. In this case the subgroup $\B_0 \subseteq \B$ contains $\ovl{e}_{0,3}$ and $\ovl{e}_{2,5}$, which are invariant under~$\sigma$, and so by Lemma~\ref{l:group-theory} and Lemma~\ref{l:b2} it will suffice to show that $\ovl{e}_{1,4}$ is in the image of~\eqref{e:conj-2}.
Let us write $\sig = \sum_i a_i \ovl{e}_{i,i+1}$. If $a_4 = 0$ then the subgroup $\P_4 \subseteq \U^1$ spanned by those $e_{i,j}$ with $j \leq 4$ is stable under the outer action of $\sig$. Of course, $\P_4$ is just a copy of the group of unitriangular $4 \times 4$-matrices whose first non-principal diagonal vanishes and so the $n=4$ case of the desired claim implies that $\ovl{e}_{1,4}$ is contained in the subgroup generated by the image of~\eqref{e:conj-2}. Similarly, if $a_0 = 0$ then we may run the same argument using the subgroup generated by those $e_{i,j}$ such that $i \geq 1$. We may hence assume without loss of generality that $a_0,a_4 \neq 0$. We now claim that in this case there must exist $b_0,\dots,b_3\in\FF_p$ such that $b = \sum_i b_i\ovl{e}_{i,i+2}$ is $\sig$-invariant and such that either $b_0b_1 \neq 0$, or $b_2b_3 \neq 0$, or $b_0b_3 \neq 0$. Indeed, one of the following cases must hold,
in each of which the indicated~$b$ can be checked to be $\sig$\nobreakdash-invariant with the aid of~\eqref{e:T}:
\begin{enumerate}
\item
If $a_2 = 0$ then $\ovl{e}_{0,2}$ and $\ovl{e}_{3,5}$ are $\sig$-invariant and so we can take $b = \ovl{e}_{0,2} + \ovl{e}_{3,5}$.
\item
If $a_2 \neq 0$ and $a_3 = 0$ then we can take $b = a_0\ovl{e}_{0,2} + a_2\ovl{e}_{1,3}$. 
\item
If $a_2 \neq 0$ and $a_1 = 0$ then we can take $b = a_4\ovl{e}_{3,5} + a_2\ovl{e}_{2,4}$. 
\item
If $a_1,a_2,a_3 \neq 0$ then $b_i := a_ia_{i+1}$ is non-zero for every $i=0,\dots,3$. In this case $b = \sum_i b_i\ovl{e}_{i,i+2}$ is $\sig$-invariant and satisfies the required property.
\end{enumerate}
Now given a $\sig$-invariant $b = \sum_i b_i\ovl{e}_{i,i+2}$ as above we write $\ovl{e}_{1,4}$ as the difference between $\ovl{e}_{1,4}+b$ and $b$. By Lemma~\ref{l:b2} the element $b$ lifts to a $\sig$-invariant conjugacy class and hence it will suffice to show that $\ovl{e}_{1,4}+b$ lifts to a $\sig$-invariant conjugacy class. In fact, we will show that $\ovl{e}_{1,4}+b$ lifts to a \emph{unique}, and therefore $\sig$-invariant, conjugacy class.

Let
$Q \in \U^1$ be the matrix with $Q_{i,i+2}=b_i$ for all~$i$, $Q_{1,4}=1$ and all other non-diagonal entries~$0$,
so that $Q$ maps to $\ovl{e}_{1,4}+b$ in $\B$.
We recall that $\B$ is the abelianization of $\U^1$ and when $n=5$ the derived subgroup $\U^3 = \Ker[\U^1 \lrar \B]$ is contained in the center of $\U^1$. It follows that the conjugation action of $\U^1$ on itself is by automorphisms which induce the identity on both $\B$ and $\U^3$. The group of such automorphisms is naturally isomorphic to $\Hom(\B,\U^3)$, where an automorphism $\phi\colon\U^1\lrar \U^1$ and the corresponding
homomorphism $\psi\colon\B\lrar \U^3$ are related by $\phi(u)=u\psi(\bar u)$ and $\psi(\bar u)=u^{-1}\phi(u)$
for $u \in \U^1$.
 In particular, we may associate with $Q$ the element $\Theta_Q \in \Hom(\B,\U^3)$ which corresponds to the automorphism of conjugation by $Q$. 
More explicitly, $\Theta_Q\colon\B \lrar \U^3$ is given by $\Theta_Q(\ovl{M}) = M^{-1}QMQ^{-1} \in \U^3$ for any lift $M \in \U^1$ of $\ovl{M}$.
As $\U^3$ is generated by $e_{0,4},e_{1,5}$ and $e_{0,5}$, as
\begin{align*}
\Theta_Q(\ovl{e}_{2,4}) &= e_{0,4}^{b_0}\rlap{,}&
\Theta_Q(\ovl{e}_{2,5}) &= e_{0,5}^{b_0}\rlap{,}&
\Theta_Q(\ovl{e}_{3,5}) &= e_{1,5}^{b_1}\rlap{,}\\
\Theta_Q(\ovl{e}_{0,2}) &= e_{0,4}^{-b_2}\rlap{,}&
\Theta_Q(\ovl{e}_{0,3}) &= e_{0,5}^{-b_3}\rlap{,}&
\Theta_Q(\ovl{e}_{1,3}) &= e_{1,5}^{-b_3}\rlap{,}
\end{align*}
and as at least one of $b_0b_1$, $b_2b_3$, $b_0b_3$ is non-zero,
the map $\Theta_Q$ is surjective.
In particular, any lift of $\ovl{e}_{1,4}+b$ to~$\U^1$ can be written as $\Theta_Q(\ovl{M})Q$ for some $M \in \U^1$
and therefore belongs to the conjugacy class of~$Q$, as desired.
\end{proof}

\begin{cor}\label{c:n45}
If~$k$ is a number field and $n=4,5$, then $\Br_{\nr}(V)/\Br_0(V)$ coincides with the subgroup $\Ba_{\omega}(V)$ of Brauer elements which are locally constant almost everywhere (see Remark~\ref{r:sha-cyc-is-ba}).
\end{cor}
\begin{proof}
If $k$ does not contain a primitive $p$-th root of unity then $\Br_{\nr}(V) = \Br_0(V)$, by Proposition~\ref{p:neukirch} and Proposition~\ref{p:trans}. Assume now that $k$ contains a primitive $p$-th root of unity. By Lemma~\ref{l:exponent}, the cyclotomic character $\chi\colon G \lrar (\ZZ/e)^* = (\ZZ/p)^*$ is then trivial. We may hence conclude from
Proposition~\ref{p:trans}, Corollary~\ref{c:special-case}, Remark~\ref{rem:deltavanishes} and Proposition~\ref{p:n45} (using Poincaré duality  
as in the proof of Lemma~\ref{l:group-theory2}) that $\Br_{\nr}(V)/\Br_0(V) \cong \Sha^1_{\cyc}(G,\what{\B})$. The desired result now follows from Remark~\ref{r:sha-cyc-is-ba}.
\end{proof}

\begin{example}\label{e:n4}
If $n=4$, $p=2$ and $(\alp_0,\dots,\alp_3)=([ab],[a],[b],[ab])$ for $a,b \in k^*$ such that
none of $a$, $b$, $ab$ is a square, then the classes $[a],[b]$ determine an isomorphism $G = \ZZ/2 \times \ZZ/2$ and
one can calculate that 
\[ \Sha^1_{\cyc}(G,\what{\B}) = \ZZ/2 .\] 
Specifically, in the case of $k=\QQ$ and $(\alp_0,\alp_1,\alp_2,\alp_3) = ([34],[2],[17],[34])$ it can be shown that the non-trivial element of $\Ba_{\omega}(V) = \Sha^1_{\cyc}(G,\what{\B})$ (see Remark~\ref{r:sha-cyc-is-ba}) is in fact locally constant at all places and yields a non-trivial obstruction to the Hasse principle (see~\cite[Example A.15]{GMTW16}). 
We note that this is by no means a contradiction to Theorem~\ref{t:massey}: indeed, an obstruction coming from a locally constant Brauer class means that the homomorphism $\alp\colon \Gam_k \lrar \A = \U/\U^1$ does not lift to $\U/\U^3=\U/\Z$, and hence the relevant Massey product is not defined.
\end{example}

Although we do not include the details here,
it is possible to give a precise description of the unramified Brauer group of~$V$ when $n=6$ as well.
As it turns out, unramified classes that fail to be locally constant at infinitely many places do exist in
this case.
In other words, the statements of Proposition~\ref{p:n45} and of Corollary~\ref{c:n45} both fail when $n=6$.

\section{Beyond the Massey vanishing conjecture}\label{s:beyond}
Higher Massey products can be defined for Galois cohomology classes in more general modules. Indeed, let $n\geq 2$ and suppose that
we are presented with Galois modules $M_{i,j}$ for $0 \leq i < j \leq n$ and with multiplication maps $M_{i,j} \otimes M_{j,k} \lrar M_{i,k}$ for $0 \leq i < j < k \leq n$ satisfying the obvious associativity condition. Given classes $\alp_i \in H^1(\Gam_k,M_{i,i+1})$ for $0\leq i<n$, a \emph{defining system} for the $n$-fold Massey product of $\alp_0,\dots,\alp_{n-1}$ is a collection of $1$-cochains $a_{i,j} \in C^1(\Gam_k,M_{i,j})$ for $(i,j) \neq (0,n)$ such that $\partial a_{i,j} = -\sum_{m=i+1}^{j-1} a_{i,m} \cup a_{m,j}$ (in particular~$a_{i,i+1}$ is a cocycle) and $[a_{i,i+1}] = \alp_i \in H^1(\Gam_k,M_{i,i+1})$. 
Given a defining system $\Lam = \{a_{i,j}\}$ as above, the element $b_{0,n} = -\sum_{m=1}^{n-1}a_{0,m} \cup a_{m,n}$ is a $2$-cocycle, and the value $\<\alp_0,\dots,\alp_{n-1}\>_{\Lam} := [b_{0,n}] \in H^2(\Gam_k,M_{0,n})$ is the \emph{$n$-fold Massey product} of $\alp_0,\dots,\alp_{n-1}$ with respect to the defining system $\Lam$.

\begin{rem}
This set-up was considered by Dwyer \cite[p.~183]{Dwy75} when $\Gam_k$ is replaced with an abstract discrete group and the action on the~$M_{i,j}$ is trivial.
We recall that when working with non-trivial Galois modules the cup product is defined with a twist by the Galois action. More precisely, if $M,M',M''$ are three Galois modules equipped with a map $(-)\cdot(-)\colon M \otimes M' \lrar M''$, then the associated cup product of two $1$-cochains $a\colon\Gam_k \lrar M$ and $a'\colon\Gam_k \lrar M'$ is given by the $2$-cochain $(a \cup a')(\sig,\tau) = a(\sig)\cdot \sigma(a'(\tau))$.
\end{rem}

\begin{example}
\label{example:highermassey}
If $R$ is a commutative ring and $N_0,\dots,N_n$ are Galois $R$-modules,
we can take $M_{i,j} = \Hom_R(N_j,N_i)$ with the induced Galois action.
\end{example}

Classical Massey products with coefficients in~$R$
correspond to the particular case of Example~\ref{example:highermassey} in which $N_0=\dots=N_n=R$ with trivial Galois action.
As another particular case of special interest, let $R=\ZZ/p$ and let $N_i=\Hom({\mmu}_p^{\otimes i},\ZZ/p)$,
so that $M_{i,j}={\mmu}_p^{\otimes \smash{(j-i)}}$.
Thus, for $\alp_0,\dots,\alp_{n-1} \in H^1(k,\mmu_p)$, we obtain $n$-fold Massey products in $H^2(k,\mmu_p^{\otimes n})$.

It is legitimate to wonder about the validity of Conjecture~\ref{c:massey}
in this more general setting.
In this section we will look into this question and try to indicate what can be said from the point of view of the approach applied above to
classical Massey products with coefficients in~$\FF_p$.
For this, we focus our attention on the case of Example~\ref{example:highermassey} where we assume in addition that $R$ and $N_0,\dots,N_n$ are finite---an assumption that will remain in force until the end of this section.

Let $W = \oplus_i N_i$ be considered as an $R$-module (without Galois action). For $m=0,\dots,n$, let $W_m = \oplus_{i=0}^{m} N_i$. Thus $0 = W_{-1} \subseteq W_0 \subseteq W_{1} \subseteq \dots \subseteq W_n=W$ is a filtration of~$W$.
Let $\T(W) \subseteq \Aut_R(W)$ be the subgroup of the group of $R$-linear automorphisms of $W$ which preserves this filtration. Every $Q \in \T(W)$ has an induced $R$-linear action on $N_i = W_i/W_{i-1}$ and so we have a homomorphism $\rho\colon\T(W) \lrar \prod_i \Aut_R(N_i)$. Let $\U(W) \subseteq \T(W)$ denote the kernel of $\rho$. We note that $\U(W)$ is a finite nilpotent group, and that when $R=N_i=\FF_p$ the group $\U(W)$ coincides with the unipotent group $\U$ considered in this paper. Let $\U^1(W) \subseteq \U(W) \subseteq \T(W)$ be the subgroup consisting of those $R$-linear automorphisms $Q\colon W \lrar W$ which act as the identity on $W_i/W_{i-2}$ for every $i=1,\dots,n$. Then $\U^1(W)$ is normal in $\T(W)$ and the quotient group $\A(W) := \T(W)/\U^1(W)$ sits in a canonically split short exact sequence
\[ 1 \lrar \displaystyle\mathop{\bigoplus}_{i=0}^{n-1}M_{i,i+1} \lrar \A(W) \lrar \prod_{i=0}^{n} \Aut_R(N_i) \lrar 1 ,\]
where $M_{i,i+1} := \Hom_R(N_{i+1},N_i)$. Now each $N_i$ is equipped with an $R$-linear Galois action which can be encoded as a homomorphism $\chi\colon\Gam_k \lrar \prod_i \Aut_R(N_i)$. Furthermore, given cohomology classes $\alp_i \in H^1(k,M_{i,i+1})$, a choice
of cocycles representing the~$\alp_i$ determines a lift of $\chi$ to $\alp\colon \Gam_k \lrar \A(W)$. (Choosing other cocycles
corresponds to conjugating~$\alp$ by an element of $\oplus_{i=0}^{n-1}M_{i,i+1}$.) We may then consider the embedding problem

\begin{align}
\begin{aligned}
\label{e:embedding-2}
\xymatrix@R=3ex{
& & & \Gam_k \ar^{\alp}[d] \ar@{-->}[dl] & \\
1 \ar[r] & \U^1(W) \ar[r] & \T(W) \ar[r] & \A(W) \ar[r] & 1 \rlap{.}
}
\end{aligned}
\end{align}
For $i < j$ we may identify homomorphisms $f \in \Hom_R(N_j,N_i)$ with elements $Q_f \in \U(W)$ of the form $Q_f(v_0,\dots,v_n) = (v_0,v_1,\dots,v_i+f(v_j),\dots,v_n)$. 
Consider the subgroup $\Z(W) = M_{0,n} = \Hom_R(N_n,N_0) \subseteq \U^1(W) \subseteq \T(W)$.  
We note that $\Z(W)$ is abelian and normal in $\T(W)$ (and central in~$\U(W)$, though not in~$\T(W)$ in general)
and we may consider the element $u \in H^2(\T(W)/\Z(W),\Z(W))$ which classifies the
group extension
\begin{align}
\label{eq:zttz}
1 \lrar \Z(W) \lrar \T(W) \lrar \T(W)/\Z(W) \lrar 1\rlap{.}
\end{align}
We endow~$\Z(W)$ with the Galois action coming from the equality $\Z(W)=M_{0,n}$ and note that
for any
group homomorphism $\alp_{\Lam}\colon\Gam_k \lrar \T(W)/\Z(W)$ lifting $\alp\colon\Gam_k \lrar \A(W)$,
this action coincides with the one obtained by combining~$\alp_{\Lam}$
with the action of~$\T(W)/\Z(W)$ on~$\Z(W)$ induced by~\eqref{eq:zttz}.

\begin{prop}\label{p:dwyer-2}
Defining systems $\Lam = \{a_{i,j}\}$ for the $n$-fold Massey product of $\{\alp_i \in H^1(k,M_{i,i+1})\}_{i=0}^{n-1}$ are in bijection with group homomorphisms $\alp_{\Lam}\colon\Gam_k \lrar \T(W)/\Z(W)$ lifting $\alp\colon\Gam_k \lrar \A(W)$. Furthermore, the value of the $n$-fold Massey product of $\alp_0,\dots,\alp_{n-1}$ with respect to $\Lam$ is the class $-\alp_{\Lam}^*u \in H^2(k,\Z(W)) = H^2(k,M_{0,n})$.
\end{prop}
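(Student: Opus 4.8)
The plan is to follow the same pattern as for Proposition~\ref{p:dwyer}, the passage from trivial to arbitrary coefficients being accounted for by a systematic twist by the Galois action on the graded pieces~$N_i$. One structural difference from the $\FF_p$-case deserves to be flagged in advance: when the $N_i$ carry a non-trivial Galois action, $\Z(W)=M_{0,n}$ is \emph{not} central in $\T(W)$, so $u$ classifies a genuinely non-central extension and one must carry the induced action throughout. As in Proposition~\ref{p:dwyer} we fix once and for all a choice of cocycles representing the~$\alp_i$; this pins down~$\alp$, and defining systems are taken to extend these representatives. Since $R$ and the~$N_i$ are finite, all the groups in sight are finite and no continuity issue arises.

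First I would make the matrix picture explicit. As each $W_m=\bigoplus_{i\le m}N_i$ is Galois-stable, an element of $\T(W)$ is a block upper triangular automorphism $Q=(Q_{i,j})_{0\le i\le j\le n}$ with $Q_{i,j}\in\Hom_R(N_j,N_i)=M_{i,j}$ and $Q_{i,i}\in\Aut_R(N_i)$, composition being matrix multiplication against the multiplication maps $M_{i,m}\otimes M_{m,j}\lrar M_{i,j}$. In these terms $\U(W)$ (resp.\ $\U^1(W)$, resp.\ $\Z(W)$) consists of those~$Q$ with $Q_{i,i}=\mathrm{id}$ for all~$i$ (resp.\ with moreover $Q_{i,i+1}=0$ for all~$i$, resp.\ with only the corner block $Q_{0,n}$ possibly non-zero), and $\A(W)=\T(W)/\U^1(W)$ is recorded by the diagonal and superdiagonal blocks. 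Conjugation by $Q\in\T(W)$ sends $f\in\Z(W)=M_{0,n}$ to $Q_{0,0}\circ f\circ Q_{n,n}^{-1}$; hence, pulled back along~$\alp$ (or along any lift of~$\alp$, which has the same diagonal blocks, namely the $\chi_i$), the action of $\Gam_k$ on $\Z(W)$ is the natural Galois action on $M_{0,n}$, which is exactly what is needed for $\alp_\Lam^*u$ to land in $H^2(k,M_{0,n})$. Write $\chi(\sig)\in\T(W)$ for the diagonal matrix with blocks $\chi_i(\sig)$; via the canonical splitting this is a homomorphism $\Gam_k\lrar\T(W)$, and the chosen cocycles representing the~$\alp_i$ are precisely the superdiagonal data of~$\alp$, twisted by~$\chi$.

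For the bijection: to a defining system $\Lam=\{a_{i,j}\}$ I attach the matrix $A_\sig$ with diagonal blocks $\mathrm{id}$, $(i,j)$-block $a_{i,j}(\sig)$ for $i<j$ with $(i,j)\neq(0,n)$, and $(0,n)$-block~$0$; I then set $M_\sig:=A_\sig\chi(\sig)$ and $\alp_\Lam(\sig):=[M_\sig]\in\T(W)/\Z(W)$. The identity
\[
M_\sig M_\tau=A_\sig\bigl(\chi(\sig)A_\tau\chi(\sig)^{-1}\bigr)\chi(\sig\tau)=A_\sig\cdot({}^{\sig}\!A_\tau)\cdot\chi(\sig\tau),
\]
where ${}^{\sig}\!A_\tau$ is the matrix with $(i,j)$-block $\sig(a_{i,j}(\tau))$, identifies the congruence $M_\sig M_\tau\equiv M_{\sig\tau}\pmod{\Z(W)}$ with the family of equations $a_{i,j}(\sig\tau)=a_{i,j}(\sig)+\sig(a_{i,j}(\tau))+\sum_{i<m<j}a_{i,m}(\sig)\cdot\sig(a_{m,j}(\tau))$ for $(i,j)\neq(0,n)$, which is nothing but $\partial a_{i,j}=-\sum_{i<m<j}a_{i,m}\cup a_{m,j}$ written out using the definitions of~$\partial$ and of the twisted cup product; and lifting~$\alp$ is read off on diagonal and superdiagonal blocks, giving $[a_{i,i+1}]=\alp_i$. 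Conversely, given a homomorphism $\beta\colon\Gam_k\lrar\T(W)/\Z(W)$ lifting~$\alp$, I lift each $\beta(\sig)$ to the matrix with vanishing $(0,n)$-block and factor it as $A_\sig\chi(\sig)$ (legitimate and unique, since the diagonal blocks of $\beta(\sig)$ are the $\chi_i(\sig)$), setting $a_{i,j}(\sig):=(A_\sig)_{i,j}$ for $(i,j)\neq(0,n)$; the homomorphism property returns the cocycle condition, lifting~$\alp$ returns $[a_{i,i+1}]=\alp_i$, and the only datum chosen (the corner block) does not affect the~$a_{i,j}$, so the two assignments are mutually inverse.

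Finally, for the value of the Massey product, I would compute $\alp_\Lam^*u$ using the set-theoretic section $s\colon\T(W)/\Z(W)\lrar\T(W)$ that sends a class to its representative with vanishing $(0,n)$-block, so that the extension $1\lrar\Z(W)\lrar\T(W)\lrar\T(W)/\Z(W)\lrar1$ is represented by the $2$-cocycle $(\ovl g,\ovl h)\mapsto s(\ovl g)s(\ovl h)s(\ovl g\,\ovl h)^{-1}$ and therefore $(\alp_\Lam^*u)(\sig,\tau)=M_\sig M_\tau M_{\sig\tau}^{-1}\in\Z(W)$. Reading off the corner block, using $(M_\sig)_{0,0}=\chi_0(\sig)$, $(M_\sig)_{n,n}=\chi_n(\sig)$, $(M_\sig)_{0,n}=0$ and $\chi_n(\tau)\chi_n(\sig\tau)^{-1}=\chi_n(\sig)^{-1}$, one finds that this element of $M_{0,n}$ is $\sum_{m=1}^{n-1}a_{0,m}(\sig)\cdot\sig(a_{m,n}(\tau))=-b_{0,n}(\sig,\tau)$; hence $\<\alp_0,\dots,\alp_{n-1}\>_\Lam=[b_{0,n}]=-\alp_\Lam^*u$ in $H^2(k,M_{0,n})$, as claimed. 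The one point requiring care—the ``main obstacle'', such as it is—is bookkeeping: keeping the twist by~$\chi$ consistent across the matrix form $M_\sig=A_\sig\chi(\sig)$, the cocycle relations, and the extension-class computation, and not forgetting that $u$ classifies a non-central extension. Once these conventions are pinned down, the argument is the same as in Dwyer's theorem.
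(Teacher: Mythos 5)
Your proof is correct and takes essentially the same approach as the paper: your factorization $M_\sigma=A_\sigma\chi(\sigma)$ is exactly the paper's substitution $a_{i,j}(\sigma)=\alpha_{i,j}(\sigma)\alpha_{j,j}(\sigma^{-1})$, your translation of the homomorphism condition into the twisted cocycle equations is the same computation, and your evaluation of $\alpha_\Lambda^*u$ via the set-theoretic lift with vanishing $(0,n)$-block and reading off the corner agrees with the paper's argument. No gaps.
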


\begin{proof}
One argues as in~\cite[Theorem~2.6]{Dwy75}.
For $0 \leq i \leq n$, let $\alpha_{i,i}\colon\Gam_k \lrar \Aut(N_i)$ denote the action of $\Gam_k$.
Suppose that~$\alpha_\Lambda\colon\Gam_k\lrar \T(W)/\Z(W)$
is a group homomorphism lifting the $\alpha_{i,i}$ and denote by $\alpha_{i,j}\colon\Gam_k \lrar M_{i,j}$,
$0 \leq i<j\leq n$, $(i,j)\neq (0,n)$,
the maps it induces, so that
\[\alpha_{i,j}(\sigma\tau)=\sum_{m=i}^j \alpha_{i,m}(\sigma)\alpha_{m,j}(\tau)\rlap{.}\]
Defining $a_{i,j}\colon\Gam_k \lrar M_{i,j}$
by $a_{i,j}(\sigma)=\alpha_{i,j}(\sigma)\alpha_{j,j}(\sigma^{-1})$, we have
\begin{align*}
(\partial a_{i,j})(\sigma,\tau) &= \sigma(a_{i,j}(\tau)) - a_{i,j}(\sigma\tau)+a_{i,j}(\sigma) \\
&=\alpha_{i,i}(\sigma)\alpha_{i,j}(\tau)\alpha_{j,j}(\tau^{-1})\alpha_{j,j}(\sigma^{-1})
-\alpha_{i,j}(\sigma\tau)\alpha_{j,j}((\sigma\tau)^{-1}) + \alpha_{i,j}(\sigma)\alpha_{j,j}(\sigma^{-1})\\
&=-\sum_{m=i+1}^{j-1}\alpha_{i,m}(\sigma)\alpha_{m,j}(\tau)\alpha_{j,j}(\tau^{-1})\alpha_{j,j}(\sigma^{-1})\rlap{,}
\end{align*}
for all $0\leq i<j\leq n$ such that $(i,j)\neq (0,n)$,
and on the other hand
\begin{align}
\begin{aligned}
\label{eq:cupproductaimmj}
(a_{i,m}\cup a_{m,j})(\sigma,\tau) &= a_{i,m}(\sigma) \sigma(a_{m,j}(\tau)) 
= a_{i,m}(\sigma) \alpha_{m,m}(\sigma) a_{m,j}(\tau) \alpha_{j,j}(\sigma^{-1})
\\
&=\alpha_{i,m}(\sigma)\alpha_{m,j}(\tau)\alpha_{j,j}(\tau^{-1})\alpha_{j,j}(\sigma^{-1})\rlap{,}
\end{aligned}
\end{align}
so that
$\partial a_{i,j} = -\sum_{m=i+1}^{j-1} a_{i,m} \cup a_{m,j}$,
i.e., $\{a_{i,j}\}$ forms a defining system.
Reversing this computation
shows that for any defining system $\Lambda=\{a_{i,j}\}$,
if $\alpha_{i,j}\colon\Gam_k\lrar M_{i,j}$
denotes, for $0\leq i<j\leq n$, $(i,j)\neq (0,n)$,
the map defined by $\alpha_{i,j}(\sigma)=a_{i,j}(\sigma)\alpha_{j,j}(\sigma)$,
then the map $\alpha_\Lambda\colon\Gam_k\lrar \T(W)/\Z(W)$ assembled from the~$\alpha_{i,j}$
is a group homomorphism.
This shows the first part of the claim.
To prove the second part, let
$p\colon T(W) \lrar M_{0,n}$
denote the set-theoretic projection
and
$\widetilde\alpha_\Lambda\colon\Gam_k\lrar T(W)$
the set-theoretic lifting of~$\alpha_\Lambda$
such that $p(\widetilde\alpha_\Lambda(\sigma))=0$ for all~$\sigma$.
The class $\alpha_\Lambda^*u$
is represented by
the $2$\nobreakdash-cocycle $b\colon\Gam_k\times \Gam_k \lrar \Z(W)$ defined by
$b(\sigma,\tau)=\widetilde \alpha_\Lambda(\sigma)\widetilde \alpha_\Lambda(\tau) \widetilde\alpha_\Lambda(\sigma\tau)^{-1}$.
Applying~$p$
 to the equality
$b(\sigma,\tau)\widetilde\alpha_\Lambda(\sigma\tau)=\widetilde \alpha_\Lambda(\sigma)\widetilde \alpha_\Lambda(\tau)$ yields
$b(\sigma,\tau)\alpha_{n,n}(\sigma\tau)=\sum_{m=1}^{n-1} \alpha_{0,m}(\sigma)\alpha_{m,n}(\tau)$,
which, by~\eqref{eq:cupproductaimmj}, amounts to
$b(\sigma,\tau) = \sum_{m=1}^{n-1}(a_{0,m} \cup a_{m,n})(\sigma,\tau)$,
i.e., $\<\alp_0,\dots,\alp_{n-1}\>_{\Lam} = [-b] =-\alpha_\Lambda^*u\in H^2(\Gam_k,M_{0,n})$.
\end{proof}

The homogeneous space associated with the (finite) embedding problem~\eqref{e:embedding-2} by the construction already used in~\S\ref{s:dwyerpalschlank}
can again serve as a splitting variety for the $n$-fold Massey product of $\{\alp_i\}$. We may hence, in principle, attempt to apply the strategy of this paper to these generalized Massey products. We begin with the following case, in which the method works with very mild modifications:

\begin{thm}\label{t:massey-2}
Fix $n \geq 3$ and let $R=\FF_p$ for a prime number $p$. For $i=0,\dots,n$, let $N_i$ be a Galois $R$-module whose underlying abelian group is isomorphic to $R$, and set $M_{i,j} = \Hom_R(N_j,N_i)$ for $0 \leq i < j \leq n$. Then the $n$-fold Massey product of any $n$-tuple of classes $\{\alp_i \in H^1(k,M_{i,i+1})\}_{i=0}^{n-1}$ vanishes as soon as it is defined.
\end{thm}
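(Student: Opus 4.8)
\emph{Proof proposal.}
The plan is to rerun the proof of Theorem~\ref{t:massey}, checking that each ingredient survives the replacement of $\U$, $\U^1$, $\A$ and $\P^{r,s}$ by $\T(W)$, $\U^1(W)$, $\A(W)$ and the evident analogue $\P^{r,s}(W)\subseteq\U^1(W)$. So let $\wtl{\alp}:\Gam_k\lrar\T(W)/\Z(W)$ be a homomorphism lifting $\alp:\Gam_k\lrar\A(W)$, i.e.\ a defining system (Proposition~\ref{p:dwyer-2}); one must show that $\alp$ lifts to $\T(W)$, equivalently, by Proposition~\ref{p:dwyer-2}, that the embedding problem~\eqref{e:embedding-2} is solvable. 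As already noted before the statement, applying the P\'al--Schlank construction of \textsection\ref{s:dwyerpalschlank} to~\eqref{e:embedding-2} produces a homogeneous space $V$ of $\SL_N$, analogous to~\eqref{eq:defv}, which is a splitting variety for the $n$-fold Massey product of $\{\alp_i\}$, with geometric stabiliser $\U^1(W)$. The decisive observation is that $\U^1(W)$ is \emph{abstractly} the group $\U^1$ of \textsection\ref{s:dwyerpalschlank}; only the Galois action has changed. Since the lower central series of $\U^1(W)$ is characteristic and the conjugation action of $\A(W)=\T(W)/\U^1(W)$ on each successive quotient factors through $\prod_i\Aut_R(N_i)=\prod_i\FF_p^*$, a group of order prime to~$p$, each such quotient is a semisimple $\FF_p$-representation, in fact a direct sum of $\A(W)$-invariant lines; refining the lower central series accordingly exhibits $\U^1(W)$ as supersolvable with outer Galois action, so that \cite[Th\'eor\`eme~B]{HW18} applies exactly as in Proposition~\ref{p:fibration}. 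Moreover $\Br_{\nr}(\ovl{V})=0$ by Proposition~\ref{p:trans}, whose proof only involves the abstract group $\U^1(W)\cong\U^1$. It therefore suffices to produce an adelic point of~$V$ orthogonal to $\Br_{\nr}(V)$.

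Next I would fix $1\le r,s\le n-2$ with $r+s=n-1$ (and $r,s\ge 3$ when $n\ge 7$), let $\ovl{\alp}:\Gam_k\lrar\T(W)/\P^{r,s}(W)$ be $\wtl{\alp}$ composed with $\T(W)/\Z(W)\lrar\T(W)/\P^{r,s}(W)$, and form the covering homogeneous space $\pi:V'\lrar V$ with geometric stabiliser $\P^{r,s}(W)$; this is legitimate since $\P^{r,s}(W)$ is normal in $\T(W)$, hence Galois-stable, $\T(W)$ being generated by $\U(W)\cong\U$, which normalises $\P^{r,s}$, and by diagonal matrices, which preserve each subgroup $M_{i,j}\subseteq\U(W)$. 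As in the proof of Theorem~\ref{t:massey} it then suffices to check that (i)~the pullback $\Br_1(V)/\Br_0(V)\lrar\Br_1(V')/\Br_0(V')$ vanishes, so that, by Proposition~\ref{p:trans} and the projection formula, the family $(\pi(y_v))$ attached to any $(y_v)\in\prod_v V'(k_v)$ is orthogonal to $\Br_{\nr}(V)$, and (ii)~$V'(k_v)\ne\varnothing$ for every place~$v$ of~$k$, i.e.\ (by~\cite[Theorem~9.6]{PS16}) that the embedding problem obtained from~\eqref{e:embedding-2} by replacing $\U^1(W)$ with $\P^{r,s}(W)$ is solvable over~$k_v$. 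For~(i), when $n\ge 7$ the conditions $r,s\ge 3$ force $\P^{r,s}(W)\subseteq[\U^1(W),\U^1(W)]=\Ker[\U^1(W)\lrar\B(W)]$, with $\B(W):=\U^1(W)^\ab$, so the map vanishes; when $3\le n\le 6$ the image of $\P^{r,s}(W)$ in $\B(W)$ lies in the subgroup $\B_0$ spanned by the images of $e_{0,2},e_{0,3},e_{n-3,n},e_{n-2,n}$, and one invokes the analogue of Proposition~\ref{p:bound_on_brnr1}. For~(ii) one applies the analogue of Proposition~\ref{p:local} to $\ovl{\alp}|_{\Gam_{k_v}}$, which lifts to $\T(W)/\Z(W)$ through $\wtl{\alp}|_{\Gam_{k_v}}$.

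It remains to establish these two analogues, by the same arguments. For the analogue of Proposition~\ref{p:local}: $\P^{r,s}(W)$ is abelian with $(Q_1-I)(Q_2-I)=0$ when $r+s=n-1$, the sequence $1\to\Z(W)\to\P^{r,s}(W)\to\P^{r,s}(W)/\Z(W)\to1$ has a splitting underlying abelian sequence, and one treats its $\Gam_K$-split and non-split cases as in the original proof --- in the non-split case the boundary $H^1(K,\P^{r,s}(W)/\Z(W))\lrar H^2(K,\Z(W))$ is surjective by local duality for finite Galois modules, and in the split case the retraction $Q\mapsto u(Q-I)v$, with $u,v$ a $\Gam_K$-equivariantly chosen dual basis vector and basis vector of $W=\bigoplus_i N_i$, extends to the conjugation-stabiliser $\mS\subseteq\T(W)$ of $\rho_{u,v}$; the case $H^2(K,M_{0,n})=0$ --- which, by local duality, covers $K$ of characteristic~$p$ and also $K$ of characteristic $\ne p$ with $\Hom(M_{0,n},\mmu_p)$ having no $\Gam_K$-invariants --- is trivial. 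For the analogue of Proposition~\ref{p:bound_on_brnr1}: Lemma~\ref{l:exponent} is a statement about the abstract group $\U^1$; Lemmas~\ref{l:b2} and~\ref{l:group-theory} describe the conjugation action of $\A(W)$ on $\U^1(W)/(\mathrm{conjugacy})$, and the only new point is that the diagonal part of an element of $\A(W)$ merely rescales each root subgroup, so those computations are unaffected; and for the analogues of Lemmas~\ref{l:group-theory2} and~\ref{l:sha} one reduces, as in Corollary~\ref{c:neukirch}, to the case $\mmu_p\subseteq k$, in which the cyclotomic character $\chi$ of Proposition~\ref{p:formula} is trivial (Lemma~\ref{l:exponent}), and then argues via Poincar\'e duality for $\hat{\ZZ}$ exactly as before. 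The remaining vanishing $\Sha^1_{\cyc}(G,\hat{\B}_0)=0$ becomes a cohomology computation now involving the diagonal characters of the $N_i$; its building block is that $\Sha^1_{\cyc}(G,\FF_p(\psi))=0$ for every character $\psi:G\lrar\FF_p^*$ (a class in this group restricts trivially to $\ker\psi$, hence is inflated from a cohomology group of a quotient of order prime to~$p$, hence vanishes).

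I expect the main obstacle to be precisely this last computation: carefully redoing the conjugacy-class and cohomology arguments of \textsection\ref{s:brauer-1} in the presence of the characters $\chi_i:\Gam_k\lrar\FF_p^*$ cutting out the~$N_i$, and in particular the $n=3$ case of Lemma~\ref{l:sha}, where $\hat{\B}_0$ is a genuinely non-trivial Galois module --- a non-split extension of a character by a sum of two characters --- so that the short exact sequence used there, together with the linear-algebra step concluding that proof, must be reconstructed. I expect it to go through because the diagonal part of the outer action has order prime to~$p$ and acts semisimply, and so contributes nothing to the cohomology groups whose vanishing the argument needs; but this verification is where the real work lies.
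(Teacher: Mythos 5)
Your overall skeleton is the paper's: identify $\U(W)$, $\U^1(W)$ with the untwisted $\U$, $\U^1$, note that only the outer Galois action changes, get supersolvability from the diagonalizability of the $\A(W)$-action on the lower central quotients, apply \cite[Th\'eor\`eme~B]{HW18} and Proposition~\ref{p:trans}, and then run the covering trick with $\P^{r,s}$, which requires a twisted analogue of Proposition~\ref{p:bound_on_brnr1} and of Proposition~\ref{p:local}. The genuine gap is in how you propose to obtain these two analogues: you plan to redo the computations of \S\ref{s:brauer-1} and the proof of Proposition~\ref{p:local} from scratch in the twisted setting, and you yourself flag this as ``where the real work lies'' without carrying it out. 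This is not a harmless deferral. The assertion that Lemmas~\ref{l:b2} and~\ref{l:group-theory} are ``unaffected'' because the diagonal part merely rescales root subgroups is not justified: once $\sigma \in G$ has a nontrivial component in $\prod_i \Aut_R(N_i)$, the fixed sets $\B^\sigma$ and $(\U^1/(\mathrm{conjugacy}))^\sigma$, the invariance equations, and the explicit conjugating elements constructed in the proof of Lemma~\ref{l:group-theory} all change (for instance $\ovl{e}_{0,3}$ need no longer be $\sigma$-invariant), and the condition entering Proposition~\ref{p:formula} involves the cyclotomic twist of a now genuinely twisted module. Likewise the $n=3$ case of Lemma~\ref{l:sha} with a twisted $\hat{\B}_0$ is left as a hope. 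As written, the two statements your proof needs are therefore not established.

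What you are missing is the reduction that makes all of this unnecessary, and it is exactly the prime-to-$p$ observation you invoke, but used as a restriction--injectivity argument rather than as a heuristic for redone computations. Let $L \subset \ovl{k}$ be the field fixed by the kernel of $\chi:\Gam_k \lrar \prod_i \Aut_R(N_i) \cong (\FF_p^*)^{n+1}$, so that $[L:k]$ is prime to $p$ and all the $N_i$ become trivial over $L$. Since $\hat{\B}_0$ is $p$-torsion, $H^1(k,\hat{\B}_0) \lrar H^1(L,\hat{\B}_0)$ is injective, so the vanishing of $\Br_{1,\nr}(V) \lrar H^1(k,\hat{\B}) \lrar H^1(k,\hat{\B}_0)$ may be checked after extension of scalars to $L$, where one is literally in the untwisted situation and Proposition~\ref{p:bound_on_brnr1} applies verbatim. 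Similarly, for the local statement: $\P^{r,s}$ being abelian of exponent $p$, the obstruction to lifting $\Gam_{k_v} \lrar \T/\P^{r,s}$ to $\T$ lies in $H^2(k_v,\P^{r,s})$, and the restriction $H^2(k_v,\P^{r,s}) \lrar H^2(L\otimes_k k_v,\P^{r,s})$ is injective because $[L:k]$ is prime to $p$; over $L$ one applies Proposition~\ref{p:local} directly. With this device no conjugacy-class computation, no twisted $\Sha^1_{\cyc}$ computation, and no rerun of the split/non-split case analysis of Proposition~\ref{p:local} is needed; without it (or a completed version of your program), the proof is incomplete.
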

\begin{proof}
The proof is essentially the same as that of Theorem~\ref{t:massey}, and so we will simply indicate the necessary modifications. 
We first note that since $R=\FF_p$ and the underlying abelian group of each $N_i$ is the additive group of $\FF_p$ we may identify $\T(W)$ with the group of upper triangular matrices and $\U(W) \subseteq \T(W)$ with the group of unipotent upper triangular matrices. Furthermore, under this identification the subgroup $\U^1(W) \subseteq \U(W)$ is simply the subgroup of those matrices whose first non-principal diagonal vanishes. In particular, $\U(W)$ and $\U^1(W)$ are the same as the groups $\U,\U^1$ we had before, and so we will simplify notation and write $\T,\U$ and $\U^1$ for $\T(W),\U(W)$ and $\U^1(W)$, respectively. We will also use the notation $\U^m$ for $m=1,\dots,n$ and $\B := \U^1/\U^3$ with the same meaning as in the previous sections.

By the above we see that the homogeneous space $V$ which serves as a splitting variety for~ \eqref{e:embedding-2} has the same geometric stabilizers as the homogeneous space we had for ordinary Massey products, only with a possibly different outer Galois action. We claim that with this outer action the group $\U^1$ is still supersolvable: indeed, the action of $\A$ on each of the quotients $\U^m/\U^{m+1}$ is diagonalizable, with eigenspaces given by the cyclic subgroups $\Hom_R(N_{i+m},N_{i})$. In particular, we may still use~\cite[Théorème~B]{HW18} to deduce the existence of rational points on $V$ when given a collection of local points which is orthogonal to the unramified Brauer group. In addition, Proposition~\ref{p:trans} equally applies in this case, showing that $V$ has a trivial transcendental unramified Brauer group. We now claim that Proposition~\ref{p:bound_on_brnr1} holds as well. Let $\B_0 \subseteq \B$ be as in Proposition~\ref{p:bound_on_brnr1}. By the inflation-restriction exact sequence we see that the statement of Proposition~\ref{p:bound_on_brnr1} is equivalent to the statement that the composed map $\Br_{1,\nr}(V) \lrar H^1(k,\what{\B}) \lrar H^1(k,\what{\B}_0)$ is the zero map. Let $L \subset \ovl{k}$ be the subfield
fixed by the kernel of $\chi\colon \Gam_k \lrar \prod_i \Aut(N_i)$. By our assumption $\Aut(N_i) \cong \FF_p^*$ has order prime to $p$ and so $[L:k]$ is prime to $p$.  Since $\what{\B}_0$ is a $p$-torsion module it follows that the map $H^1(k,\what{\B}_0) \lrar H^1(L,\what{\B}_0)$ is injective, and hence to prove the statement we may as well extend our scalars to $L$. But now we are reduced to the case of ordinary Massey products to which Proposition~\ref{p:bound_on_brnr1} itself applies.

Arguing as in the proof of Theorem~\ref{t:massey}, it will now suffice to
prove an analogue of Proposition~\ref{p:local}.
Let $r,s \in \{1,\dots,n-2\}$
satisfy $r+s=n-1$ and set
\[ \P^{r,s} = \bigoplus_{i=0}^{r} \Hom_R(N_n,N_i) \oplus \bigoplus_{j=n-s}^{n} \Hom_R(N_j,N_0) \subseteq \U^1\rlap{.}\]
If~$v$ is a place of~$k$, we need to prove
that a homomorphism $\Gam_{k_v} \lrar \T/\P^{r,s}$ that lifts to $\Gam_{k_v} \lrar \T/\Z$ also
lifts to $\Gam_{k_v} \lrar \T$.
As $\P^{r,s}$ is abelian,
the obstruction to the existence of such a lifting lives in $H^2(k_v,\P^{r,s})$.
As $\P^{r,s}$ has exponent~$p$ and as $[L:k]$ is prime to~$p$,
the restriction map $H^2(k_v,\P^{r,s}) \to H^2(L \otimes_k k_v,\P^{r,s})$ is injective;
we may therefore once again extend the scalars from~$k$ to~$L$ and apply Proposition~\ref{p:local} directly.
\end{proof}

In contrast with Theorem~\ref{t:massey-2}, we will now show that when $R=\ZZ/8$, triple Massey products may be defined and non-trivial, even when $N_i = R$ for all $i$.

\begin{thm}\label{t:counter}
There exist
$\chi_{0,1},\chi_{1,2},\chi_{2,3} \in H^1(\QQ,\ZZ/8)$
such that the triple Massey product
$\<\chi_{0,1},\chi_{1,2},\chi_{2,3}\>$ is defined but does not contain $0$.
\end{thm}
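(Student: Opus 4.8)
The plan is to exploit the fact that for coefficients in $\ZZ/8$ with trivial Galois action the relevant unipotent group of upper triangular unipotent matrices is replaced by the group $\U(\ZZ/8)$ of unitriangular $4\times 4$ matrices over $\ZZ/8$, and that the center $\Z(W) = \Hom(N_3,N_0) \cong \ZZ/8$ sits in $\U^1(W) = \U^3(W)$. By Proposition~\ref{p:dwyer-2}, constructing $\chi_{0,1},\chi_{1,2},\chi_{2,3}$ with defined but non-vanishing triple Massey product amounts to exhibiting a homomorphism $\beta:\Gam_\QQ \lrar \T/\Z$ (equivalently $\U/\Z$, after fixing the diagonal action to be trivial) which lifts the diagonal $\chi = (\mathrm{id},\dots,\mathrm{id})$ and such that $\beta$ does \emph{not} lift to a homomorphism $\Gam_\QQ \lrar \U$. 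Since the obstruction to lifting $\beta$ across $1 \to \Z \to \U \to \U/\Z \to 1$ is a class in $H^2(\QQ,\ZZ/8)$, and different defining systems change this class by the image of the boundary map $H^1(\QQ,\P/\Z) \lrar H^2(\QQ,\ZZ/8)$ where $\P = \U^1$ here is abelian of the form $\Hom(N_2,N_0)\oplus\Hom(N_3,N_1)\oplus\Hom(N_3,N_0)$ — actually one must be careful, $\U^1(\ZZ/8)$ need not be abelian, so one should instead track the full set $\<\chi_{0,1},\chi_{1,2},\chi_{2,3}\>$ directly as in the definition. The key point is therefore to choose the $\chi_{i,i+1}$ so that (i) $\chi_{0,1}\cup\chi_{1,2} = 0$ and $\chi_{1,2}\cup\chi_{2,3}=0$ in $H^2(\QQ,\ZZ/8)$ (so that defining systems exist), while (ii) for \emph{every} choice of the intermediate cochains $a_{0,2},a_{1,3}$ the resulting class $[b_{0,3}]\in H^2(\QQ,\ZZ/8)$ is nonzero.

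Concretely, I would first reduce (i) to a concrete condition: if $\chi_{0,1},\chi_{1,2},\chi_{2,3}$ correspond to elements of $\QQ^*/(\QQ^*)^8$ under Kummer theory, then $\chi_{0,1}\cup\chi_{1,2}$ is the class of a cyclic algebra and I would arrange for these cup products to vanish by choosing the relevant elements to be $8$-th power norms from the corresponding cyclic extensions (e.g. picking them supported at disjoint sets of primes, or choosing one of them to be a perfect power modulo the other's splitting field). Next, the indeterminacy: the set of triple Massey products is a coset of $\chi_{0,1}\cup H^1(\QQ,\ZZ/8) + H^1(\QQ,\ZZ/8)\cup\chi_{2,3}$ inside $H^2(\QQ,\ZZ/8)$ (this is the classical indeterminacy, valid for $R = \ZZ/8$ just as for $\FF_p$), so I would need the single ``canonical'' Massey value coming from one fixed defining system to be nonzero modulo that subgroup. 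To verify this one works place by place: by the Hasse principle for $\Br(\QQ)$, $H^2(\QQ,\ZZ/8) \hookrightarrow \bigoplus_v H^2(\QQ_v,\ZZ/8)$, and I would pin down the local invariant at a carefully chosen place — most likely a place $v$ where $\chi_{0,1}$ and $\chi_{2,3}$ both become trivial (so that the whole indeterminacy subgroup dies locally at $v$) while the Massey value has nonzero local invariant at $v$. That forces a nonzero global class for every defining system.

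The main obstacle I anticipate is exactly the interplay between the indeterminacy and the local constraints: one needs the characters $\chi_{0,1},\chi_{2,3}$ to be ``large enough'' at some auxiliary primes to kill the full indeterminacy subgroup locally, yet ``small enough'' (i.e.\ the cup products must genuinely vanish globally, not just locally) so that defining systems exist at all — and one must simultaneously ensure the $\ZZ/8$-Massey value itself, which involves a genuinely order-$8$ (not order-$2$) phenomenon, has the right nonzero $8$-adic local invariant. I would search for an explicit numerical example, likely with the $\chi_{i,i+1}$ given by small integers and one well-chosen auxiliary prime $\ell \equiv 1 \pmod 8$ where things become tame; the verification that $0\notin\<\chi_{0,1},\chi_{1,2},\chi_{2,3}\>$ then reduces to a finite local computation of Hilbert-symbol-type invariants over $\QQ_\ell$ and $\QQ_2$. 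A clean way to organize this is to first establish, as a lemma, a Hasse-principle-style criterion: $0\in\<\chi_{0,1},\chi_{1,2},\chi_{2,3}\>$ over $\QQ$ iff it holds over every $\QQ_v$ \emph{and} a global reciprocity condition is met — but since over local fields the $\ZZ/8$ Massey vanishing is expected to hold (as remarked in the introduction, via \cite{Koc13} and local duality), the obstruction must be purely the reciprocity/global piece, which is what the explicit example is designed to violate.
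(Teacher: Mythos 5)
Your overall framing (the Dwyer-type reformulation via Proposition~\ref{p:dwyer-2}, arranging the two cup products to vanish so that the product is defined, then showing that no defining system gives zero) is the right one, and your remark that the triple product is a coset of $\chi_{0,1}\cup H^1(\QQ,\ZZ/8)+H^1(\QQ,\ZZ/8)\cup\chi_{2,3}$ is correct. But the concrete detection mechanism you propose cannot work. You want a place $v$ at which $\chi_{0,1}$ and $\chi_{2,3}$ both restrict to zero (so that the indeterminacy dies locally) while the Massey value has nonzero invariant at $v$. However, the restriction to $\Gamma_{\QQ_v}$ of any global defining system is a defining system for the local triple product $\langle \chi_{0,1}|_v,\chi_{1,2}|_v,\chi_{2,3}|_v\rangle$, and a triple Massey product depends only on the cohomology classes of its arguments; when the two outer classes vanish locally, this local product is a coset of the zero subgroup that contains $0$, i.e.\ it equals $\{0\}$. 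Hence at such a place every global Massey value restricts to $0$, contradicting the requirement that it have a nonzero local invariant there. More generally, since (as you yourself note) local $\ZZ/8$ Massey vanishing is expected at every place, no single-place invariant computation can detect the failure: the obstruction is necessarily a global reciprocity/Sha phenomenon, and your sketch offers no mechanism for producing or certifying such an obstruction.

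That global mechanism is exactly what the paper supplies, and it is the genuinely missing idea in your proposal. The paper takes primes $p,q\equiv 1\pmod 8$, lets $\chi_{0,1},\chi_{2,3}$ be $\ZZ/8$-lifts of the quadratic characters of $\QQ(\sqrt p)$ and $\QQ(\sqrt{2q})$ (such lifts exist by a local-global/Poitou--Tate argument), and then exploits the Grunwald--Wang special case --- $16$ is an $8$th power in every completion of the relevant field $L$ but not in $L$, so $\Sha^1(L,\mmu_8)\cong\ZZ/2$ --- to show that the map $\Sha^2(\QQ,\U^1_{\ZZ/8})\to\Sha^2(\QQ,\P)$ is not injective for a suitable abelian subgroup $\P\subseteq\U_{\ZZ/8}$ containing $\U^1_{\ZZ/8}$ (note that for $n=3$ the group $\U^1_{\ZZ/8}$ is in fact abelian, so your worry on that point is moot). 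A nonzero class $\gamma$ in this kernel yields a character $\chi_{1,2}$ with $\partial\chi_{1,2}=\gamma$, making the associated embedding problem everywhere locally solvable but globally unsolvable; local solvability forces the two cup products into $\Sha^2(\QQ,\ZZ/8)=0$, so the Massey product is defined, while global unsolvability means it cannot contain $0$. Without an input of this kind (a locally-trivial-but-globally-nontrivial class, which is special to $\ZZ/8$ coefficients and has no analogue over $\FF_p$), your plan of an explicit numerical example verified by finite local Hilbert-symbol computations has no route to a proof.
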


The remainder of this section is devoted to the proof of Theorem~\ref{t:counter}. Let us choose distinct positive primes $p,q$ which are both $1$ mod $8$.

\begin{lem}
The fields $\QQ(\sqrt{p})$ and $\QQ(\sqrt{2q})$
can be embedded in cyclic extensions of~$\QQ$ of degree~$8$.
In other words, the classes of these quadratic extensions
belong to the image of the natural map $H^1(\QQ,\ZZ/8)\to H^1(\QQ,\ZZ/2)$.
\end{lem}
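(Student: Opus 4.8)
The plan is to exhibit, for each of the two quadratic fields, an explicit cyclic degree\nobreakdash-$8$ extension of $\QQ$ containing it. This is equivalent to the reformulation in terms of $H^1(\QQ,\ZZ/8)\to H^1(\QQ,\ZZ/2)$: a lift to a homomorphism $\Gam_\QQ\to\ZZ/8$ of the quadratic character of $\QQ(\sqrt d)$ is automatically \emph{surjective} (its image surjects onto $\ZZ/2$ via $\ZZ/8\twoheadrightarrow\ZZ/2$, and the order\nobreakdash-$2$ subgroup $\{0,4\}$ of $\ZZ/8$ maps to $0$ there), so such a lift is the same datum as a cyclic octic $K/\QQ$ whose unique quadratic subfield is $\QQ(\sqrt d)$. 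For $\QQ(\sqrt p)$ this is immediate from cyclotomy: since $p\equiv 1\pmod 8$, the cyclic group $\Gal(\QQ(\zeta_p)/\QQ)$ has order $p-1$ divisible by~$8$, so $\QQ(\zeta_p)$ contains a subfield cyclic of degree~$8$ over~$\QQ$; its quadratic subfield is the quadratic subfield of $\QQ(\zeta_p)$, which is $\QQ(\sqrt p)$ because $p\equiv 1\pmod 4$.

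For $\QQ(\sqrt{2q})$ one cannot stay inside a single cyclotomic field of conductor $8q$: every cyclic octic subfield of $\QQ(\zeta_{8q})$ has $\QQ(\sqrt q)$ rather than $\QQ(\sqrt{2q})$ as its quadratic subfield, because the $2$\nobreakdash-part $(\ZZ/8)^*\cong(\ZZ/2)^2$ is killed in any octic quotient and so cannot contribute to the quadratic subextension. This is the main obstacle, and it forces one up to conductor~$32$ on the $2$\nobreakdash-component together with a compositum argument. Concretely, I would take $K_1:=\QQ(\zeta_{32})^+$, the maximal totally real subfield of $\QQ(\zeta_{32})$: since $(\ZZ/32)^*\cong\ZZ/2\times\ZZ/8$ with the $\ZZ/2$ generated by complex conjugation, $\Gal(K_1/\QQ)$ is cyclic of order~$8$; $K_1$ is ramified only at~$2$ and its quadratic subfield is $\QQ(\sqrt 2)=\QQ(\zeta_8)^+$. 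I would take $K_2\subset\QQ(\zeta_q)$ cyclic of degree~$8$ over~$\QQ$ (it exists as above since $q\equiv 1\pmod 8$), ramified only at~$q$, with quadratic subfield $\QQ(\sqrt q)$. As $K_1$ and $K_2$ are ramified at disjoint sets of primes they are linearly disjoint, so after fixing isomorphisms $\Gal(K_i/\QQ)\cong\ZZ/8$ we may write $G:=\Gal(K_1K_2/\QQ)\cong\ZZ/8\times\ZZ/8$; let $\phi\colon G\to\ZZ/8$, $(a,b)\mapsto a+b$, which is surjective with kernel $\{(a,-a):a\in\ZZ/8\}\cong\ZZ/8$, and set $K:=(K_1K_2)^{\ker\phi}$, a cyclic extension of $\QQ$ of degree~$8$.

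The only computation left is to check $\QQ(\sqrt{2q})\subseteq K$. For this I would observe that $(a,b)\in G$ fixes $\sqrt 2$ iff $a$ is even and fixes $\sqrt q$ iff $b$ is even, hence fixes $\sqrt 2\sqrt q$ iff $a+b$ is even; therefore $\Gal(K_1K_2/\QQ(\sqrt{2q}))=\phi^{-1}(2\ZZ/8)\supseteq\ker\phi$, so $\QQ(\sqrt{2q})=(K_1K_2)^{\phi^{-1}(2\ZZ/8)}\subseteq(K_1K_2)^{\ker\phi}=K$, which is what we want (and, being the quadratic subfield of the cyclic octic $K$, it is the only one). The genuinely nontrivial idea is thus twofold: recognising that the $2$\nobreakdash-part must be pushed to conductor~$32$, and combining it with the $\QQ(\zeta_q)$\nobreakdash-piece through the ``sum'' (anti\nobreakdash-diagonal) quotient~$\phi$; the remaining verifications are routine.
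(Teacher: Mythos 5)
Your proof is correct, but it takes a genuinely different route from the paper. You construct the cyclic octic extensions explicitly inside cyclotomic fields: for $\QQ(\sqrt p)$ the degree-$8$ subfield of $\QQ(\zeta_p)$ (using $8\mid p-1$ and $p\equiv 1 \bmod 4$), and for $\QQ(\sqrt{2q})$ the anti-diagonal quotient of $\Gal(K_1K_2/\QQ)\cong\ZZ/8\times\ZZ/8$ with $K_1=\QQ(\zeta_{32})^+$ and $K_2\subset\QQ(\zeta_q)$; the verifications (quadratic subfield of $K_1$ is $\QQ(\sqrt2)$, linear disjointness by disjoint ramification, and the parity computation showing $\sqrt2\sqrt q$ is fixed by $\ker\phi$) are all sound, and the preliminary remark that a $\ZZ/8$-lift of a nontrivial quadratic character is automatically surjective is also fine, since every proper subgroup of $\ZZ/8$ lies in $2\ZZ/8$. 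The paper instead argues by a local-global principle: it checks at each place $v$ (only $v\in\{2,p,q\}$ require work, via local duality and triviality of the Hilbert symbols $(p,x)_v$, $(2q,x)_v$ for $x\in\mmu_4(\QQ_v)$) that the quadratic classes lift locally to $H^1(\QQ_v,\ZZ/8)$, and then uses $\Sha^1(\QQ,\mmu_4)=0$, Poitou--Tate duality giving $\Sha^2(\QQ,\ZZ/4)=0$, and the sequence $0\to\ZZ/4\to\ZZ/8\to\ZZ/2\to0$ to promote local lifts to a global one. Your approach is more elementary and fully explicit (it exhibits the octic fields and makes the conductor $32q$ visible, including the observation that conductor $8q$ cannot work), at the cost of a case-specific construction; the paper's argument is shorter per case, purely cohomological, and matches the duality machinery used in the rest of that section, requiring only the verification of local conditions. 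Both ultimately exploit $p\equiv q\equiv 1\bmod 8$ in the same places.
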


\begin{proof}
Let us check that for any place~$v$ of~$\QQ$,
the images of these classes in $H^1(\QQ_v,\ZZ/2)$ come from $H^1(\QQ_v,\ZZ/8)$.
For $v \notin \{2,p,q\}$ this is clear, as these quadratic extensions are unramified at~$v$
if~$v$ is finite and split at~$v$ if~$v$ is real.
For $v \in \{2,p,q\}$,
we need to see,
by local duality, that the images of these two classes
are orthogonal to the kernel of the natural map $H^1(\QQ_v,\mmu_2) \to H^1(\QQ_v,\mmu_8)$;
that is, that the Hilbert symbols $(p,x)_v$ and $(2q,x)_v$ are trivial
for all $x \in \mmu_4(\QQ_v)$.
If $v \in \{p,q\}$, this is true as~$x$ is automatically a square in~$\QQ_v$.
If $v=2$, then $x \in \mmu_2(\QQ_v)$, so that $(2,x)_v$ is trivial, and $p$ and~$q$ are squares in~$\QQ_v$.

Any rational number which is everywhere locally a $4$\nobreakdash-th power is a $4$\nobreakdash-th power.
Hence $\Sha^1(\QQ,\mmu_4)=0$, from which it follows, by Poitou--Tate duality,
that $\Sha^2(\QQ,\ZZ/4)=0$.  In view of the exact sequence $0 \to \ZZ/4 \to \ZZ/8 \to \ZZ/2 \to 0$,
the lemma follows.
\end{proof}

Let us fix a homomorphism $\chi_{0,1}\colon \Gam_{\QQ} \lrar \ZZ/8$ lifting the quadratic character of $\QQ(\sqrt{p})/\QQ$
and a homomorphism $\chi_{2,3}\colon \Gam_{\QQ} \lrar \ZZ/8$ lifting the quadratic character of $\QQ(\sqrt{2q})/\QQ$.

Consider the group $\U_{\ZZ/8}$ of upper triangular unipotent $4\times 4$-matrices with coefficients in $\ZZ/8$. Let $\P \subseteq \U_{\ZZ/8}$ be the subgroup generated by $e_{0,2},e_{0,3},e_{1,2},e_{1,3}$, so that $\P$ is abelian and contains the subgroup $\U^1_{\ZZ/8} \subseteq \U_{\ZZ/8}$ of upper triangular unipotent matrices whose first non-principal diagonal vanishes. Let $\C := \U_{\ZZ/8}/\P$ so that $\C$ is abelian generated by the images $\ovl{e}_{0,1},\ovl{e}_{2,3}$ of $e_{0,1}$ and $e_{2,3}$ respectively. Using this basis, the cyclic characters $\chi_{0,1},\chi_{2,3}\colon \Gam_\QQ \lrar \ZZ/8$ can be interpreted as a single homomorphism $(\chi_{0,1},\chi_{2,3})\colon\Gam_{\QQ} \lrar \C$. Since $\P$ is abelian we have an honest action of $\C$ on $\P$ which we can pull back to an action of $\Gam_{\QQ}$ on $\P$. We note that this action preserves $\U^1_{\ZZ/8}$ and we have a short exact sequence of Galois modules
\[ 1 \lrar  \U^1_{\ZZ/8} \lrar \P \lrar \ZZ/8\<\ovl{e}_{1,2}\> \lrar 1 .\]
We then claim the following:
\begin{lem}\label{l:sha-2}
The natural map $\Sha^2(\QQ,\U^1_{\ZZ/8}) \to \Sha^2(\QQ,\P)$ is not injective.
\end{lem}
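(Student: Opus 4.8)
The plan is to reformulate the statement as the existence of a character $\chi_{1,2}\in H^1(\QQ,\ZZ/8)$ with prescribed local-versus-global lifting behaviour along the surjection $\P\surj\ZZ/8\<\ovl e_{1,2}\>$, and to produce such a character using the primes $p$ and $q$. First I would exploit the short exact sequence $1\to\U^1_{\ZZ/8}\to\P\to\ZZ/8\<\ovl e_{1,2}\>\to 1$, on whose quotient the Galois action is trivial. It yields a connecting map $\delta\colon H^1(\QQ,\ZZ/8)\to H^2(\QQ,\U^1_{\ZZ/8})$ whose image is $\Ker\big(H^2(\QQ,\U^1_{\ZZ/8})\to H^2(\QQ,\P)\big)$, and likewise over every completion. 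A routine diagram chase then identifies $\Ker\big(\Sha^2(\QQ,\U^1_{\ZZ/8})\to\Sha^2(\QQ,\P)\big)$ with $\delta(H^1_\flat)$, where $H^1_\flat\subseteq H^1(\QQ,\ZZ/8)$ is the subgroup of characters whose restriction to each $\Gam_{\QQ_v}$ lifts to $H^1(\QQ_v,\P)$. So it suffices to find $\chi_{1,2}\in H^1_\flat$ that does not lift to $H^1(\QQ,\P)$. Reading a $\P$-valued lift of $\chi_{1,2}$ as a defining system, with $\chi_{0,1},\chi_{2,3}$ playing the roles of the $(0,1)$- and $(2,3)$-entries, this says precisely that there is a triple Massey product $\<\chi_{0,1},\chi_{1,2},\chi_{2,3}\>$ with $\ZZ/8$-coefficients which is defined, contains $0$ over every $\QQ_v$, yet does not contain $0$ over $\QQ$; equivalently, by Poitou--Tate duality, that $\Sha^1(\QQ,\hat\P)\to\Sha^1(\QQ,\widehat{\U^1_{\ZZ/8}})$ is not surjective.

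Next I would use the filtration of $\U^1_{\ZZ/8}$ by the central subgroup $\ZZ/8\<e_{0,3}\>$, on which---as on the quotient $(\ZZ/8)^2$---the Galois action is trivial, the extension class of the filtration being, up to sign, the pair $(\chi_{2,3},\chi_{0,1})$. Hence, for $\chi_{1,2}\in H^1(\QQ,\ZZ/8)$, the obstruction $\delta(\chi_{1,2})$ maps in $H^2(\QQ,(\ZZ/8)^2)$ to $(\chi_{0,1}\cup\chi_{1,2},-\chi_{2,3}\cup\chi_{1,2})$ and, as soon as these two cup products vanish in $H^2(\QQ,\ZZ/8)$, it is the image of a class $\xi\in H^2(\QQ,\ZZ/8)$ which is well defined modulo $\chi_{0,1}\cup H^1(\QQ,\ZZ/8)+\chi_{2,3}\cup H^1(\QQ,\ZZ/8)$ and is a value of $\<\chi_{0,1},\chi_{1,2},\chi_{2,3}\>$. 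I would then choose $\chi_{1,2}$ ramified only at $2$, $p$ and $q$, arranged so that $\chi_{0,1}\cup\chi_{1,2}=\chi_{2,3}\cup\chi_{1,2}=0$ (the Massey product is defined) and so that at each $v\in\{2,p,q\}$---the only places where $\U^1_{\ZZ/8}$ ramifies---the restriction $\xi|_v$ lies in $\chi_{0,1}|_v\cup H^1(\QQ_v,\ZZ/8)+\chi_{2,3}|_v\cup H^1(\QQ_v,\ZZ/8)$. At all other places the module is unramified and $\delta(\chi_{1,2})|_v=0$ for free, by inflation--restriction and the vanishing of the cohomology of $\hat\ZZ$ on a finite module. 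This local analysis is where $p\equiv q\equiv 1\bmod 8$ is used: it ensures that $2$ is a square in $\QQ_p$ and $\QQ_q$ and that $p$ and $q$ are squares in $\QQ_2$, which makes the relevant local Hilbert-symbol identities hold, so that $\chi_{1,2}\in H^1_\flat$.

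It then remains to prove that $\delta(\chi_{1,2})\ne 0$, i.e.\ that $\xi\notin\chi_{0,1}\cup H^1(\QQ,\ZZ/8)+\chi_{2,3}\cup H^1(\QQ,\ZZ/8)$, equivalently that $0\notin\<\chi_{0,1},\chi_{1,2},\chi_{2,3}\>$ over $\QQ$. This is the main obstacle: by construction the class is locally trivial everywhere, so it is invisible in every completion, and one must bring in a genuinely global (reciprocity) input rather than local duality. Concretely, I would produce a cocycle representing $\xi$ from the auxiliary cochains $a_{0,2},a_{1,3}$ chosen above, evaluate its class through Hilbert symbols at $2$, $p$ and $q$, and check that the total is non-trivial precisely because $p\equiv q\equiv 1\bmod 8$. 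Alternatively one can run the dual computation, showing by a Poitou--Tate Euler-characteristic count together with the explicit---and small---cohomology of $\hat\P$ and $\widehat{\U^1_{\ZZ/8}}$ (which depend only on $\chi_{0,1}$, $\chi_{2,3}$ and the cyclotomic character) that $\Sha^1(\QQ,\hat\P)$ is strictly smaller than $\Sha^1(\QQ,\widehat{\U^1_{\ZZ/8}})$.

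Throughout, the delicate point is thus the last step: exhibiting an obstruction that vanishes over every $\QQ_v$ but not over $\QQ$. Everything before it is a reduction (via the long exact sequence and the central filtration of $\U^1_{\ZZ/8}$) plus a finite set of local Hilbert-symbol checks that are made to work by the congruences $p\equiv q\equiv 1\bmod 8$; the genuinely global content is concentrated in showing that the resulting triple Massey product value $\xi$ escapes the indeterminacy subgroup.
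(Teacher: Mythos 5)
Your reduction is sound and in fact runs parallel to the paper's: via the long exact sequence for $1\to\U^1_{\ZZ/8}\to\P\to\ZZ/8\<\ovl{e}_{1,2}\>\to1$, non-injectivity amounts to finding a $\chi_{1,2}$ that lifts to $\P$ everywhere locally but not globally, and by Poitou--Tate duality this is equivalent to non-surjectivity of $\Sha^1(\QQ,\hat{\P})\to\Sha^1(\QQ,\hat{\U}^1_{\ZZ/8})$ --- which is exactly the paper's first move. The gap lies in the only step carrying real content: you never exhibit the global obstruction. Your first mechanism --- representing the Massey value $\xi$ by a cocycle and ``evaluating its class through Hilbert symbols at $2$, $p$, $q$'' to see that ``the total is non-trivial'' --- cannot work as stated: by your own construction $\xi$ is trivial (modulo the local indeterminacy) in every completion, so the class you must show non-zero lies in a Tate--Shafarevich group and is invisible to any sum of local invariants; you acknowledge this tension but do not resolve it. Your alternative, a Poitou--Tate ``Euler-characteristic count'', computes orders of global cohomology groups, not of $\Sha^1$, whose size depends on the local conditions at all places; determining it is precisely the hard point, not a routine count.

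What is needed, and what the paper actually uses, is the Grunwald--Wang phenomenon for $16$ and $8$-th powers. Concretely, the class $([16],[1])\in H^1(\QQ,\mmu_8\times\mmu_8)$ maps into $\Sha^1(\QQ,\hat{\U}^1_{\ZZ/8})$: one uses that $16$ is an $8$-th power in every $\QQ_v$ with $v\neq 2$, that $p^4$ and $q^4$ are $8$-th powers in $\QQ_2$ because $p\equiv q\equiv 1 \bmod 8$, and the computation that the boundary $H^0(K,\mmu_8)\lrar H^1(K,\mmu_8\times\mmu_8)$ sends $-1$ to $([16q^4],[p^4])$. Its image is non-zero yet does not come from $\Sha^1(\QQ,\hat{\P})$: after restricting to the field $L$ cut out by $(\chi_{0,1},\chi_{2,3})$, a preimage would force the existence of $[a_{1,2}]\in\Sha^1(L,\mmu_8)$ with $\ovl{e}_{0,1}[a_{1,2}]=[16][a_{1,2}]$, whereas the Grunwald--Wang theorem gives $\Sha^1(L,\mmu_8)\cong\ZZ/2$ generated by $[16]$ (here one checks $\sqrt{2},\sqrt{-2},\sqrt{-1}\notin L$, so $[16]\neq 0$), on which the Galois action is perforce trivial. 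This specific arithmetic input --- the exceptional behaviour of $16$ with respect to $8$-th powers, both as the source of the non-trivial element and as the reason no equivariant extension of the data exists --- is the heart of the lemma and is absent from your outline; without it the proposal is a correct framing of the problem rather than a proof.
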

\begin{proof}
By global arithmetic duality it will suffice to show that the map
\[ \Sha^1(\QQ,\what{\P}) \lrar \Sha^1(\QQ,\what{\U}^1_{\ZZ/8}) \] 
is not surjective. Consider the short exact sequence of Galois modules
\[ 0 \lrar \mmu_8 \times \mmu_8 \lrar \what{\U}^1_{\ZZ/8} \lrar \mmu_8 \lrar 0 \]
where the inclusion $\mmu_8 \times \mmu_8 \subseteq \what{\U}^1_{\ZZ/8}$ is dual to the projection $\U^1_{\ZZ/8} \lrar \<e_{0,2},e_{1,3}\>$ and the projection $\what{\U}^1_{\ZZ/8} \lrar \mmu_8$ is dual to the inclusion $\<e_{0,3}\> \subseteq \U^1_{\ZZ/8}$. For $a,b \in \QQ^*$ we may consider the corresponding class $([a],[b]) \in H^1(\QQ,\mmu_8 \times \mmu_8) = \QQ^*/(\QQ^*)^8 \times \QQ^*/(\QQ^*)^8$. Unwinding the definitions we see that for any field $K$ containing $\QQ$, the boundary map
\[ H^0(K,\mmu_8) \lrar H^1(K,\mmu_8 \times \mmu_8) \]
sends $-1 \in \mmu_8(K)$ to $([16q^4],[p^4])$. 
This implies that $([16],[1]) \in H^1(\QQ,\mmu_8 \times \mmu_8)$ maps to an element of $\Sha^1(\QQ,\what{\U}^1_{\ZZ/8}) \subseteq H^1(\QQ,\what{\U}^1_{\ZZ/8})$. Indeed, for every place $v \neq 2$ the element $16$ is an $8$\nobreakdash-th power in $\QQ_v$ and hence the element $([16]_v,[1]_v)$ is trivial, and at $v=2$ we have that $([16]_v,[1]_v) = ([16q^4],[p^4])$. We note, however, that the image of $([16],[1])$ in $\Sha^1(\QQ,\what{\U}^1_{\ZZ/8})$ is non-zero since $\mmu_8(\QQ) = \{1,-1\}$ and $([16],[1])$ is not a multiple of $([16q^4],[p^4])$ in $H^1(\QQ,\mmu_8 \times \mmu_8)$. We now claim the following:
\begin{center}
$(*)\quad$ The image of $([16],[1])$ in $\Sha^1(\QQ,\what{\U}^1_{\ZZ/8})$ does not come from $\Sha^1(\QQ,\what{\P})$. 
\end{center}
The homomorphism $(\chi_{0,1},\chi_{2,3})\colon\Gam_\QQ \lrar \C$ is surjective. Indeed, the fact that $[p],[2q]$ are linearly independent in $H^1(\QQ,\ZZ/2)$ implies that $\chi_{0,1},\chi_{2,3}$ are linearly independent in $H^1(\QQ,\ZZ/8)$, so that the image of this homomorphism
is not contained in any index~$2$ subgroup of~$\C$.
Let $L \subset \ovl{k}$ be the subfield fixed by the kernel of this homomorphism.  We henceforth identity $\Gal(L/\QQ)$ with $\C$. Consider the commutative diagram
\[ \xymatrix@R=3ex{
\Sha^1(\QQ,\what{\P}) \ar[r]\ar[d] & \Sha^1(\QQ,\what{\U}^1_{\ZZ/8}) \ar[d] \\
\Sha^1(L,\what{\P})^{\C} \ar[r] & \Sha^1(L,\what{\U}^1_{\ZZ/8})^{\C} \rlap{.}
}\]
To prove $(*)$ it will suffice to show that the image of $([16],[1])$ in $\Sha^1(L,\what{\U}^1_{\ZZ/8})^\C$ does not come from $\Sha^1(L,\what{\P})^\C$. Unwinding the definitions we may identify $\Sha^1(L,\what{\U}^1_{\ZZ/8})^\C$ with the group of triples 
\[ ([a_{0,2}],[a_{0,3}],[a_{1,3}]) \in (\Sha^1(L,\mmu_8))^3 \subseteq L^*/(L^*)^8 \times L^*/(L^*)^8 \times L^*/(L^*)^8 \] 
such that $\ovl{e}_{0,1}[a_{1,3}] = [a_{0,3}][a_{1,3}]$,
$\ovl{e}_{0,1}[a_{0,j}] = [a_{0,j}]$
for $j \in \{2,3\}$,
 $\ovl{e}_{2,3}[a_{0,2}] = [a_{0,2}][a_{0,3}^{-1}]$,
and $\ovl{e}_{2,3}[a_{i,3}] = [a_{i,3}]$
for $i \in \{0,1\}$,
 where $\ovl{e}_{i,j} \in \C$ acts on $L^*/(L^*)^8$ via the identification $\C = \Gal(L/\QQ)$. Similarly, we may identify $\Sha^1(L,\what{\P})^\C$ with the group of quadruples 
\[ ([a_{0,2}],[a_{0,3}],[a_{1,2}],[a_{1,3}]) \in (\Sha^1(L,\mmu_8))^4 \subseteq
L^*/(L^*)^8 \times L^*/(L^*)^8 \times L^*/(L^*)^8 \times L^*/(L^*)^8 \]
such that
$\ovl{e}_{0,1}[a_{1,j}] = [a_{0,j}][a_{1,j}]$ for $j\in\{2,3\}$,
$\ovl{e}_{0,1}[a_{0,j}] = [a_{0,j}]$ for $j \in \{2,3\}$,
$\ovl{e}_{2,3}[a_{i,2}] = [a_{i,2}][a_{i,3}^{-1}]$ for $i\in\{0,1\}$,
and $\ovl{e}_{2,3}[a_{i,3}] = [a_{i,3}]$ for $i \in \{0,1\}$.
We wish to show that the triple $([16],[1],[1])$ is not in the image of the map
\begin{equation}\label{e:map}
\Sha^1(L,\what{\P})^\C \lrar \Sha^1(L,\what{\U}^1_{\ZZ/8})^\C\text{,} \quad ([a_{0,2}],[a_{0,3}],[a_{1,2}],[a_{1,3}]) \mapsto ([a_{0,2}],[a_{0,3}],[a_{1,3}])\rlap{.}
\end{equation} 
We note that $[16]$ is indeed a non-zero element of $\Sha^1(L,\mmu_8)$: $16$ is not an $8$\nobreakdash-th power in~$L$ since $L$ does not contain either of $\sqrt{2},\sqrt{-2},\sqrt{-1}$; indeed, the quadratic subextensions of $L$ are by construction $\QQ(\sqrt{p}),\QQ(\sqrt{2q})$ and $\QQ(\sqrt{2qp})$, and $p\neq q$. To finish the proof it will suffice to show that there is no element $[a_{1,2}] \in \Sha^1(L,\mmu_8)$ such that $\ovl{e}_{0,1}[a_{1,2}] = [16][a_{1,2}]$. But this is now a consequence of the Grunwald-Wang theorem, which says, in particular, that for every number field $K$ the group $\Sha^1(K,\mmu_m)$ is either trivial or $\ZZ/2$ (see, e.g., \cite[Chapter Ten, Theorem 1]{AT52}). In our case we have a non-zero element $[16]$ and so $\Sha^1(L,\mmu_8) \cong \ZZ/2$, generated by $[16]$. In particular, the action of $\C$ on $\Sha^1(L,\mmu_8)$ is trivial and there can be no element $[a_{1,2}] \in \Sha^1(L,\mmu_8)$ such that $\ovl{e}_{0,1}[a_{1,2}] = [16][a_{1,2}]$.
\end{proof}

\begin{proof}[Proof of Theorem~\ref{t:counter}]
By Lemma~\ref{l:sha-2}, we may fix a non-zero element $\gam \in \Sha^2(\QQ,\U^1_{\ZZ/8})$ whose image in $H^2(\QQ,\P)$ is trivial. It follows that there exists a cyclic character $\chi_{1,2}\colon \Gam_{\QQ} \lrar \ZZ/8\<\ovl{e}_{1,2}\>$ such that $\partial\chi_{1,2} = \gam \in H^2(\QQ,\U^1_{\ZZ/8})$. 
Let us now prove that the triple Massey product
$\<\chi_{0,1},\chi_{1,2},\chi_{2,3}\>$ is defined but does not contain $0$.

The classes $\chi_{0,1},\chi_{1,2},\chi_{2,3}$ determine a homomorphism $\chi\colon \Gam_\QQ \lrar \ZZ/8\<\ovl{e}_{0,1},\ovl{e}_{1,2},\ovl{e}_{2,3}\>$. Consider the associated Massey embedding problem
\begin{align}
\begin{aligned}
\label{e:embedding-3}
\xymatrix@R=3ex{
& & & \Gam_\QQ \ar^{\chi}[d] \ar@{-->}[dl] & \\
1 \ar[r] & \U^1_{\ZZ/8} \ar[r] & \U_{\ZZ/8} \ar[r] & \ZZ/8\<\ovl{e}_{0,1},\ovl{e}_{1,2},\ovl{e}_{2,3}\> \ar[r] & 1 \rlap{.}
}
\end{aligned}
\end{align}
We claim that~\eqref{e:embedding-3} has a local solution when restricted to every $\Gam_v \subseteq \Gam_\QQ$ but does not have a solution globally.
To see this, consider the commutative diagram of finite groups
\begin{align}\begin{aligned}
\label{e:U-8}
\xymatrix@R=4ex{
\U^1_{\ZZ/8} \ar@{^{(}->}[r]\ar@{=}[d] & \P \ar@{^{(}->}[d]\ar@{->>}[r] & \ZZ/8\<\ovl{e}_{1,2}\> \ar@{^{(}->}[d] \\
\U^1_{\ZZ/8} \ar@{^{(}->}[r] & \U_{\ZZ/8} \ar@{->>}[r]\ar@{->>}[d] & \ZZ/8\<\ovl{e}_{0,1},\ovl{e}_{1,2},\ovl{e}_{2,3}\> \ar@{->>}[d]  \\
& \ZZ/8\<\ovl{e}_{0,1},\ovl{e}_{2,3}\> \ar@{=}[r]\ar@/_0.8pc/[u] & \ZZ/8\<\ovl{e}_{0,1},\ovl{e}_{2,3}\> \ar@/_0.8pc/[u] \\
}
\end{aligned}
\end{align}
in which the rows are exact and the columns are split exact. Here the chosen sections of the bottom vertical projections send the generators $\ovl{e}_{0,1},\ovl{e}_{2,3} \in \ZZ/8\<\ovl{e}_{0,1},\ovl{e}_{2,3}\>$ to $e_{0,1},e_{2,3} \in \U_{\ZZ/8}$ and to $\ovl{e}_{0,1},\ovl{e}_{2,3} \in \ZZ/8\<\ovl{e}_{0,1},\ovl{e}_{1,2},\ovl{e}_{2,3}\>$. Using these sections as base points, we see that homomorphisms $\Gam_\QQ \lrar \U_{\ZZ/8}$ which lift $(\chi_{0,1},\chi_{2,3})\colon \Gam_\QQ \lrar \ZZ/8\<\ovl{e}_{0,1},\ovl{e}_{2,3}\>$ are classified (up to conjugation by~$\P$) by elements in $H^1(\QQ,\P)$, while homomorphisms $\Gam_\QQ \lrar \ZZ/8\<\ovl{e}_{0,1},\ovl{e}_{1,2},\ovl{e}_{2,3}\>$ which lift $(\chi_{0,1},\chi_{2,3})\colon \Gam_\QQ \lrar \ZZ/8\<\ovl{e}_{0,1},\ovl{e}_{2,3}\>$ are in bijection with the elements of $H^1(\QQ,\ZZ/8\<\ovl{e}_{1,2}\>)$. The map between these two sets of lifts induced by the projection $\U_{\ZZ/8} \lrar \ZZ/8\<\ovl{e}_{0,1},\ovl{e}_{1,2},\ovl{e}_{2,3}\>$ is compatible with the map on Galois cohomology induced by $\P \lrar \ZZ/8\<\ovl{e}_{1,2}\>$. We conclude that $\chi\colon\Gam_\QQ \lrar \ZZ/8\<\ovl{e}_{0,1},\ovl{e}_{1,2},\ovl{e}_{2,3}\>$ lifts to $\Gam_\QQ \lrar \U_{\ZZ/8}$ if and only if $\chi_{1,2} \in H^1(\QQ,\ZZ/8\<\ovl{e}_{1,2}\>)$ comes from $H^1(\QQ,\P)$. By our choice of $\chi_{1,2}$ the last statement holds when base changing to each completion of $\QQ$, but not for $\QQ$ itself. It then follows that the embedding problem~\eqref{e:embedding-3} is solvable everywhere locally, but not globally.

To finish the proof, we note that the local solvability of~\eqref{e:embedding-3} implies in particular that the elements $\chi_{0,1} \cup \chi_{1,2},\chi_{1,2} \cup \chi_{2,3} \in H^2(\QQ,\ZZ/8)$ both belong to $\Sha^2(\QQ,\ZZ/8)$. As $\Sha^1(\QQ,\mmu_8)=0$, we have $\Sha^2(\QQ,\ZZ/8) = 0$ by global duality. We conclude that the triple Massey product $\<\chi_{0,1},\chi_{1,2},\chi_{2,3}\>$ is indeed defined. Nonetheless, it cannot contain~$0$ since the embedding problem~\eqref{e:embedding-3} is not solvable.
\end{proof}

\bibliographystyle{amsalpha}
\bibliography{massey}

\providecommand{\bysame}{\leavevmode\hbox to3em{\hrulefill}\thinspace}
\providecommand{\MR}{\relax\ifhmode\unskip\space\fi MR }
\providecommand{\MRhref}[2]{%
  \href{http://www.ams.org/mathscinet-getitem?mr=#1}{#2}
}
\providecommand{\href}[2]{#2}
\begin{thebibliography}{DLAN17}

\bibitem[AT09]{AT52}
E.~Artin and J.~Tate, \emph{Class field theory}, AMS Chelsea Publishing,
  Providence, RI, 2009, reprinted with corrections from the 1967 original.

\bibitem[Bog87]{Bog88}
F.~A. Bogomolov, \emph{The {B}rauer group of quotient spaces of linear
  representations}, Izv. Akad. Nauk SSSR Ser. Mat. \textbf{51} (1987), no.~3,
  485--516, English translation in Math.\ USSR-Izv.\ 30 (1988), no.~3,
  455--485.

\bibitem[CTP00]{ctpoonen}
J.-L. Colliot-Th\'{e}l\`ene and B.~Poonen, \emph{Algebraic families of nonzero
  elements of {S}hafarevich-{T}ate groups}, J. Amer. Math. Soc. \textbf{13}
  (2000), no.~1, 83--99.

\bibitem[CTS21]{ctskobook}
J.-L. Colliot-Th\'{e}l\`ene and A.~N. Skorobogatov, \emph{The
  {B}rauer-{G}rothendieck group}, Ergebnisse der Mathematik und ihrer
  Grenzgebiete (3), vol.~71, Springer, Cham, 2021.

\bibitem[Dem10]{Dem10}
C.~Demarche, \emph{Groupe de {B}rauer non ramifi\'{e} d'espaces homog\`enes \`a
  stabilisateurs finis}, Math. Ann. \textbf{346} (2010), no.~4, 949--968.

\bibitem[DLA19]{demarchelucchinireduction}
C.~Demarche and G.~Lucchini~Arteche, \emph{Le principe de {H}asse pour les
  espaces homog\`enes: r\'{e}duction au cas des stabilisateurs finis},
  Compositio Math. \textbf{155} (2019), no.~8, 1568--1593.

\bibitem[DLAN17]{DLAN15}
C.~Demarche, G.~Lucchini~Arteche, and D.~Neftin, \emph{The {G}runwald problem
  and approximation properties for homogeneous spaces}, Ann. Inst. Fourier
  (Grenoble) \textbf{67} (2017), no.~3, 1009--1033.

\bibitem[Dwy75]{Dwy75}
W.~G. Dwyer, \emph{Homology, {M}assey products and maps between groups}, J.
  Pure Appl. Algebra \textbf{6} (1975), no.~2, 177--190.

\bibitem[Efr14]{efrat2014}
I.~Efrat, \emph{The {Z}assenhaus filtration, {M}assey products, and
  representations of profinite groups}, Adv. Math. \textbf{263} (2014),
  389--411.

\bibitem[EM17]{efratmatzri}
I.~Efrat and E.~Matzri, \emph{Triple {M}assey products and absolute {G}alois
  groups}, J. Eur. Math. Soc. \textbf{19} (2017), no.~12, 3629--3640.

\bibitem[GM19]{GM19}
P.~Guillot and J.~Min{\'a}{\v{c}}, \emph{Extensions of unipotent groups,
  {M}assey products and {G}alois theory}, Adv. Math. \textbf{354} (2019),
  106748, 40.

\bibitem[GMT18]{GMTW16}
P.~Guillot, J.~Min\'{a}\v{c}, and A.~Topaz, \emph{Four-fold {M}assey products
  in {G}alois cohomology}, Compos. Math. \textbf{154} (2018), no.~9,
  1921--1959, with an appendix by O.~Wittenberg.

\bibitem[Har07]{Har07}
D.~Harari, \emph{Quelques propri\'{e}t\'{e}s d'approximation reli\'{e}es \`a la
  cohomologie galoisienne d'un groupe alg\'{e}brique fini}, Bull. Soc. Math.
  France \textbf{135} (2007), no.~4, 549--564.

\bibitem[HW15]{HW15}
M.~J. Hopkins and K.~G. Wickelgren, \emph{Splitting varieties for triple
  {M}assey products}, J. Pure Appl. Algebra \textbf{219} (2015), no.~5,
  1304--1319.

\bibitem[HW20]{HW18}
Y.~Harpaz and O.~Wittenberg, \emph{Z\'{e}ro-cycles sur les espaces homog\`enes
  et probl\`eme de {G}alois inverse}, J. Amer. Math. Soc. \textbf{33} (2020),
  no.~3, 775--805.

\bibitem[Koc02]{Koc13}
H.~Koch, \emph{Galois theory of {$p$}-extensions}, Springer Monographs in
  Mathematics, Springer-Verlag, Berlin, 2002.

\bibitem[Kra66]{Kra66}
D.~Kraines, \emph{Massey higher products}, Trans. Amer. Math. Soc. \textbf{124}
  (1966), 431--449.

\bibitem[LA14]{LA14}
G.~Lucchini~Arteche, \emph{Groupe de {B}rauer non ramifi\'{e} des espaces
  homog\`enes \`a stabilisateur fini}, J. Algebra \textbf{411} (2014),
  129--181.

\bibitem[Man71]{Man71}
Yu.~I. Manin, \emph{Le groupe de {B}rauer-{G}rothendieck en g\'eom\'etrie
  diophantienne}, Actes du {C}ongr\`es {I}nternational des {M}ath\'ematiciens
  ({N}ice, 1970), {T}ome 1, Gauthier-Villars, Paris, 1971, pp.~401--411.

\bibitem[Mas58]{Mas58}
W.~S. Massey, \emph{Some higher order cohomology operations}, Symposium
  internacional de topolog\'{i}a algebraica, Univ.\ Nacional Aut\'{o}noma de
  M\'{e}xico and UNESCO, Mexico City, 1958, pp.~145--154.

\bibitem[Mas69]{Mas58bis}
\bysame, \emph{Higher order linking numbers}, Conf. on {A}lgebraic {T}opology
  ({U}niv. of {I}llinois at {C}hicago {C}ircle, {C}hicago, {I}ll., 1968), Univ.
  of Illinois at Chicago Circle, Chicago, Ill., 1969, pp.~174--205.

\bibitem[Mas98]{Mas98}
\bysame, \emph{Higher order linking numbers}, J. Knot Theory Ramifications
  \textbf{7} (1998), no.~3, 393--414.

\bibitem[Mat14]{Mat14}
E.~Matzri, \emph{Triple {M}assey products in {G}alois cohomology},
  \href{http://arxiv.org/abs/1411.4146}{arXiv:1411.4146}, 2014.

\bibitem[Mat18]{Mat18}
\bysame, \emph{Triple {M}assey products of weight $(1, n, 1)$ in {G}alois
  cohomology}, J. Algebra \textbf{499} (2018), 272--280.

\bibitem[Mat19]{Mat19}
\bysame, \emph{Higher triple {M}assey products and symbols}, J. Algebra
  \textbf{527} (2019), 136--146.

\bibitem[Mic15]{Mic13}
I.~M. Michailov, \emph{Bogomolov multipliers for unitriangular groups}, C. R.
  Acad. Bulgare Sci. \textbf{68} (2015), no.~6, 689--696.

\bibitem[Mil06]{milneadtnew}
J.~S. Milne, \emph{Arithmetic duality theorems}, second ed., BookSurge, LLC,
  Charleston, SC, 2006.

\bibitem[Mor04]{morishita}
M.~Morishita, \emph{Milnor invariants and {M}assey products for prime numbers},
  Compos. Math. \textbf{140} (2004), no.~1, 69--83.

\bibitem[MT16]{MN16}
J.~Min\'{a}\v{c} and N.~D. T\^{a}n, \emph{Triple {M}assey products vanish over
  all fields}, J. Lond. Math. Soc. (2) \textbf{94} (2016), no.~3, 909--932.

\bibitem[MT17a]{MN17b}
\bysame, \emph{Counting {G}alois {$\mathbb{U}_4(\mathbb{F}_p)$}-extensions
  using {M}assey products}, J. Number Theory \textbf{176} (2017), 76--112.

\bibitem[MT17b]{MN17a}
\bysame, \emph{Triple {M}assey products and {G}alois theory}, J. Eur. Math.
  Soc. \textbf{19} (2017), no.~1, 255--284.

\bibitem[Pos11]{positselski2011}
L.~Positselski, \emph{Mixed {A}rtin-{T}ate motives with finite coefficients},
  Mosc. Math. J. \textbf{11} (2011), no.~2, 317--402.

\bibitem[Pos17]{Pos17}
\bysame, \emph{Koszulity of cohomology = {$K(\pi,1)$}-ness + quasi-formality},
  J. Algebra \textbf{483} (2017), 188--229.

\bibitem[PS16]{PS16}
A.~P\'{a}l and T.~Schlank, \emph{The {B}rauer-{M}anin obstruction to the
  local-global principle for the embedding problem},
  \href{http://arxiv.org/abs/1602.04998}{arXiv:1602.04998}, 2016.

\bibitem[PS18]{PS18}
A.~P{\'a}l and E.~Szab{\'o}, \emph{The strong {M}assey vanishing conjecture for
  fields with virtual cohomological dimension at most $1$},
  \href{http://arxiv.org/abs/1811.06192}{arXiv:1811.06192}, 2018.

\bibitem[Sha07]{Sha11}
R.~T. Sharifi, \emph{Massey products and ideal class groups}, J. reine angew.
  Math. \textbf{603} (2007), 1--33.

\bibitem[Vog04]{Vog04}
D.~Vogel, \emph{Massey products in the {G}alois cohomology of number fields},
  Ph.D.\ thesis, Ruprecht-Karls-Universit\"at Heidelberg,
  \href{http://doi.org/10.11588/heidok.00004418}{doi:10.11588/heidok.00004418},
  2004.

\end{thebibliography}
\end{document}